\providecommand{\tabularnewline}{\\}
\providecommand{\algorithmname}{Algorithm}
\theoremstyle{plain}
\newtheorem{thm}{\protect\theoremname}[section]
\theoremstyle{plain}
\newtheorem{assumption}[thm]{\protect\assumptionname}
\theoremstyle{definition}
\newtheorem{defn}[thm]{\protect\definitionname}
\theoremstyle{plain}
\newtheorem{lem}[thm]{\protect\lemmaname}
\theoremstyle{remark}
\newtheorem{rem}[thm]{\protect\remarkname}
\theoremstyle{plain}
\newtheorem{prop}[thm]{\protect\propositionname}
\providecommand{\assumptionname}{Assumption}
\providecommand{\definitionname}{Definition}
\providecommand{\lemmaname}{Lemma}
\providecommand{\propositionname}{Proposition}
\providecommand{\remarkname}{Remark}
\providecommand{\theoremname}{Theorem}
\begin{document}
\title{Corporative Stochastic Approximation with Random Constraint Sampling
for Semi-Infinite Programming}
\author{Bo Wei, William B. Haskell, and Sixiang Zhao}
\maketitle
\begin{abstract}
We developed a corporative stochastic approximation (CSA) type algorithm
for semi-infinite programming (SIP), where the cut generation problem
is solved inexactly. First, we provide general error bounds for inexact
CSA. Then, we propose two specific random constraint sampling schemes
to approximately solve the cut generation problem. When the objective
and constraint functions are generally convex, we show that our randomized
CSA algorithms achieve an $\mathcal{O}(1/\sqrt{N})$ rate of convergence
in expectation (in terms of optimality gap as well as SIP constraint
violation). When the objective and constraint functions are all strongly
convex, this rate can be improved to $\mathcal{O}(1/N)$.
\end{abstract}

\section{Introduction}

In this paper, we combine the corporative stochastic approximation
(CSA) method developed in \cite{lan2016algorithms} with inexact cut
generation for semi-infinite programming (SIP). In particular, we
focus on random sampling methods to approximately solve the SIP cut
generation problem. The SIP cut generation problem is usually non-linear
and non-convex, so it is difficult to solve it to global optimality
deterministically. Two specific random constraint sampling schemes
are proposed to overcome this difficulty, and the randomized CSA algorithms
demonstrate good performance to solve SIP with theoretically guaranteed
convergence rates. 

\subsection{Previous work}

We refer the reader to \cite{bonnans2013perturbation,goberna2013semi,hettich1993semi,lopez2007semi,shapiro2009semi}
for recent detailed overviews of SIP. The main computational difficulty
in SIP comes from the infinitely many constraints, and several practical
schemes have been proposed to remedy this difficulty \cite{goberna2017recent,goberna2002linear,lopez2007semi,reemtsen1998numerical}.
We offer the following very rough classification of SIP methods based
on \cite{hettich1993semi,lopez2007semi,reemtsen1998numerical}. 

\textit{Exchange methods}: In exchange methods, in each iteration
a set of new constraints is exchanged for the previous set (there
are many ways to do this). Cutting plane methods are a special case
where constraints are never dropped. The algorithm in \cite{gribik1979central}
is the prototype for several SIP cutting plane schemes, and it has
been improved in various ways \cite{betro2004accelerated,kortanek1993central,mehrotra2014cutting}.
In particular, a new exchange method is proposed in \cite{zhang2010new}
that only keeps those active constraints with positive Lagrange multipliers.
New constraints are selected using a certain computationally-cheap
criterion. In \cite{mehrotra2014cutting}, the earlier central cutting
plane algorithm from \cite{kortanek1993central} is extended to allow
for nonlinear convex cuts.

Randomized cutting plane algorithms have recently been developed for
SIP in \cite{Calafiore_Uncertain_2005,campi2008exact,esfahani2015performance}.
The idea is to input a probability distribution over the constraints,
randomly sample a modest number of constraints, and then solve the
resulting relaxed problem. Intuitively, as long as a sufficient number
of samples of the constraints is drawn, the resulting randomized solution
should violate only a small portion of the constraints and achieve
near optimality. 

\textit{Discretization methods}: In the discretization approach,
a sequence of relaxed problems with a finite number of constraints
is solved according to a predefined or adaptively controlled grid
generation scheme \cite{reemtsen1991discretization,still2001discretization}.
Discretization methods are generally computationally expensive. The
convergence rate of the error between the solution of the SIP problem
and the solution of the discretized program is investigated in \cite{still2001discretization}. 

\textit{Local reduction methods}: In the local reduction approach,
an SIP problem is reduced to a problem with a finite number of constraints
\cite{gramlich1995local}. The reduced problem involves constraints
which are defined only implicitly, and the resulting problem is solved
via the Newton method which has good local convergence properties.
However, local reduction methods require strong assumptions and are
often conceptual.

\textit{Dual methods}: A wide class of SIP algorithms is based on
directly solving the KKT conditions. In \cite{ito2000dual,liu2002adaptive,liu2004new},
the authors derive Wolfe's dual for an SIP and discuss numerical schemes
for this problem. The KKT conditions often have some degree of smoothness,
and so various Newton-type methods can be applied \cite{li2004smoothing,ni2006truncated,qi2009smoothing,qi2003semismooth}.
However, feasibility is not guaranteed under the all Newton-type methods.
A new smoothing Newton-type method is proposed to overcome this drawback
in \cite{ling2010new}.

\textit{Applications}: SIP is the basis of the approximate linear
programming (ALP) approach for dynamic programming. Randomly sampling
state-action pairs is shown to give a tractable relaxed linear programming
problem, as explored in \cite{bhat2012non,esfahani2017infinite,deFarias_Sampling_2004}.
In \cite{bhat2012non,deFarias_Sampling_2004}, the sampling distribution
is assumed to be the occupation measure corresponding to the optimal
policy. In \cite{lin2017revisiting}, an adaptive constraint sampling
approach called 'ALP-Secant' is developed which is based on solving
a sequence of saddle-point problems. It is shown that ALP-Secant returns
a near optimal ALP solution and a lower bound on the optimal cost
with high probability in a finite number of iterations.

Many risk-aware optimization models also depend on SIP (e.g. \cite{noyan2013optimization,noyan2018optimization}),
in particular, risk-constrained optimization (e.g. \cite{dentcheva2003optimization,dentcheva2004optimality,dentcheva2009optimization,dentcheva2015optimization,haskell2017primal,homem2009cutting,hu2012sample}).
In \cite{dentcheva2003optimization,dentcheva2004optimality,dentcheva2009optimization,haskell2013optimization},
a duality theory for stochastic dominance constrained optimization
is developed which shows the special role of utility functions as
Lagrange multipliers. Relaxations of multivariate stochastic dominance
have been proposed based on various parametrized families of utility
functions, see \cite{dentcheva2009optimization,haskell2013optimization,homem2009cutting,hu2012sample}.
Computational aspects of the increasing concave stochastic dominance
constrained optimization are discussed in \cite{haskell2017primal,homem2009cutting,hu2012sample}.

\subsection{Contributions}

We summarize our main contributions in this work as follows:
\begin{enumerate}
\item We give error bounds for inexact CSA (where the cut generation problem
is solved inexactly). These error bounds are general, and may form
the basis for the convergence analysis of many CSA-type algorithms.
\item We develop two specialized CSA algorithms where random sampling is
used to approximately solve the cut generation problem. The first
algorithm is based on using a fixed sampling distribution, in line
with \cite{Calafiore_Uncertain_2005,campi2008exact,esfahani2015performance}.
Intuitively, as long as a sufficiently large number of samples is
drawn, the resulting randomized solution should violate only a \textquotedbl small
portion\textquotedbl{} of the constraints. The second algorithm is
based on adaptively sampling the constraints based on information
from the current iterate. In particular, we compute the analytical
solution of a regularized cut generation problem for the current iterate,
and then use this distribution to do adaptive sampling.
\item We provide a stochastic convergence analysis for both our specialized
CSA algorithms based on our general error bounds. We show that as
the errors in cut generation decrease at appropriate rates, our specialized
CSA algorithms achieve the same convergence rate as in the error-free
case. When the objective and constraint functions are convex, both
algorithms achieve an $\mathcal{O}(1/\sqrt{N})$ rate of convergence
in expectation, in terms of optimality gap and constraint violation.
If the objective and constraint functions are strongly convex, this
rate can be improved to $\mathcal{O}(1/N)$. 
\end{enumerate}
This paper is organized as follows. We first provide preliminary material
in Section \ref{prelim}. The following Section \ref{generalresults}
describes a general inexact CSA algorithm, and then provides error
bounds (in terms of the error in solving each cut generation problem).
Next, in Section \ref{examples}, we give the formal details for our
two specialized CSA algorithms and report their convergence rates.
For clearer organization, the detailed proofs of all our results are
gathered together in Section \ref{proofs}. We then present some numerical
experiments for CSA with random sampling in Section \ref{sec:Numerical-Experiments}.
Finally, we conclude the paper in Section \ref{conclusion} with a
discussion of further issues and future research.

\paragraph*{Notation}

We make use of the following basic notation throughout the paper.
For $x\in\mathbb{R}$, the ceiling function $\lceil x\rceil$ returns
the smallest integer greater than or equal to $x\in\mathbb{R}$. The
Euclidean norm and inner product on $\mathbb{R}^{n}$ are $\|x\|:=(\sum_{i=1}^{n}x_{i}^{2})^{\frac{1}{2}}$
and $\langle x,y\rangle=\sum_{i=1}^{n}x_{i}y_{i}$, respectively.
The Euclidean ball with radius $r$ centered at $x_{c}$ is $B_{r}(x_{c}):=\left\{ x:\,\left\Vert x-x_{c}\right\Vert \leq r\right\} $.
For a function $f\text{ : }\mathbb{R}^{n}\rightarrow\mathbb{R}$,
we denote its subdifferential by $\partial f(x)$ and a subgradient
of $f$ at $x$ by $f'(x)\in\partial f(x)$, respectively.

We also make use of the following further notation. For any set $\Delta\subset\mathbb{R}^{d}$,
$\mathcal{P}(\Delta)$ is the space of probability distributions on
$\Delta$. The Kullback-Liebler divergence is 
\[
D\left(\phi,\varphi\right):=\mathbb{E}_{\widetilde{\delta}\sim\phi}\left[\log\left(\frac{\phi(\widetilde{\delta})}{\varphi(\widetilde{\delta})}\right)\right]=\int_{\Delta}\log(\frac{\phi(\delta)}{\varphi(\delta)})\phi(d\delta)
\]
for probability densities $\phi,\varphi\in\mathcal{P}(\Delta)$. For
any integer $M\geq1$, we denote the $M-$Cartesian product of $\Delta$
by $\Delta^{M}:=\times_{i=1}^{M}\Delta$. Finally, for any probability
distribution $Q$ over set $\Delta$, the product measure and the
associated expectation on $\Delta^{M}$ are denoted by $Q^{M}$ and
$\mathbb{E}_{Q^{M}}$, respectively.

\section{\label{prelim}Preliminaries}

We begin our discussion of SIP with the following problem ingredients:
\begin{description}
\item [{A1}] Convex, compact decision set $\mathcal{X}\subset\mathbb{R}^{n}$;
\item [{A2}] Convex objective function $f\text{ : }\mathcal{X}\rightarrow\mathbb{R}$,
which is Lipschitz continuous with constant $L_{f}$;
\item [{A3}] Compact constraint index set $\Delta\subset\mathbb{R}^{d}$;
\item [{A4}] Constraint function $g\text{ : }\mathcal{X}\times\Delta\rightarrow\mathbb{R}$,
such that for each $\delta\in\Delta$, $x\rightarrow g(x,\delta)$
is convex and Lipschitz continuous with constant $L_{g,\mathcal{X}}$;
\item [{A5}] For all $x\in\mathcal{X}$, $\delta\rightarrow g(x,\delta)$
is Lipschitz continuous with constant $L_{g,\Delta}$.
\end{description}
We write the constraints as a single function $G(x):=\max_{\delta\in\Delta}g(x,\delta)$.
The resulting semi-infinite programming problem is:

\begin{eqnarray}
\min_{x\in\mathcal{X}}\Big\{ f(x):G(x):=\max_{\delta\in\Delta}g(x,\delta)\leq0\Big\}.\label{optiprob}
\end{eqnarray}
Problem (\ref{optiprob}) is a convex optimization problem under Assumptions
\textbf{A1}, \textbf{A2}, and \textbf{A4}. Formally, we also assume
that Problem (\ref{optiprob}) is solvable.
\begin{assumption}
An optimal solution $x^{*}$ of Problem (\ref{optiprob}) exists.
\end{assumption}

To continue, we recall some fundamental concepts of convex analysis.
\begin{defn}
A function $f:\,\mathcal{X}\rightarrow\mathbb{R}$ is strongly convex
with parameter $\alpha>0$, if for any $f'(x)\in\partial f(x)$ we
have
\[
f(x)\geq f(z)+\langle f'(z),x-z\rangle+\frac{\alpha}{2}\|x-z\|^{2},\forall x,z\in\mathcal{X}.
\]
\end{defn}

The distance generating function and its associated prox-function
are defined as follows. 
\begin{defn}
(i) A function $\omega_{X}:\mathcal{X}\rightarrow\mathbb{R}$ is a
distance generating function with parameter $\alpha>0$, if $\omega_{\mathcal{X}}$
is continuously differentiable and strongly convex with parameter
$\alpha$. 

(ii) (Bregman's distance) The prox-function associated with $\omega_{\mathcal{X}}$
is $V(x,z):=\omega_{\mathcal{X}}(z)-\omega_{\mathcal{X}}(x)-\langle\nabla\omega_{\mathcal{X}}(x),z-x\rangle$.

(iii) The prox-mapping is $P_{x,\mathcal{X}}(y):=\arg\min_{z\in\mathcal{X}}\{\langle y,z\rangle+V(x,z)\}$.
\end{defn}

Without loss of generality, we may assume that $\alpha=1$ in part
(i) of the preceding definition since we can always re-scale $\omega_{\mathcal{X}}(x)$
to become $\overline{\omega}_{\mathcal{X}}(x)=\omega_{\mathcal{X}}(x)/\alpha$.
The distance generating function $\omega_{\mathcal{X}}$ gives a measure
of the diameter of $\mathcal{X}$, i.e. $D_{\mathcal{X}}:=\sqrt{\max_{x,z\in\mathcal{X}}V(x,z)}$.
Clearly, the diameter satisfies $D_{\mathcal{X}}<\infty$ as long
as $\mathcal{X}$ is bounded.

We assume that the prox-function $V(x,z)$ is chosen such that the
prox-mapping $P_{x,\mathcal{X}}:\mathbb{R}^{n}\rightarrow\mathbb{R}^{n}$
can be easily computed. The next result follows from the definition
of the prox-function.
\begin{lem}
\cite[Lemma 2.1]{nemirovski2009robust}\label{block} For every $u,x\in\mathcal{X}$
and $y\in\mathbb{R}^{n}$, we have 
\[
V(P_{x,X}(y),u)\leq V(x,u)+\langle y,u-x\rangle+\frac{1}{2}\|y\|^{2}.
\]
\end{lem}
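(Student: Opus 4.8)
\emph{Proof proposal.} The plan is to unwind the definition of the prox-mapping and exploit the first-order optimality conditions of the convex subproblem it solves. First I would set $x^{+} := P_{x,\mathcal{X}}(y)$, so that $x^{+}$ minimizes $z \mapsto \langle y, z\rangle + V(x,z)$ over $\mathcal{X}$. Since $\omega_{\mathcal{X}}$ is continuously differentiable, $V(x,\cdot)$ is differentiable with $\nabla_{z} V(x,z) = \nabla\omega_{\mathcal{X}}(z) - \nabla\omega_{\mathcal{X}}(x)$, and because $\mathcal{X}$ is convex the optimality condition is the variational inequality
\[
\langle y + \nabla\omega_{\mathcal{X}}(x^{+}) - \nabla\omega_{\mathcal{X}}(x),\, u - x^{+}\rangle \geq 0 \qquad \text{for all } u \in \mathcal{X}.
\]

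Next I would invoke the Bregman three-point identity, which follows by directly expanding the definition of $V$: for any $a,b,c \in \mathcal{X}$,
\[
\langle \nabla\omega_{\mathcal{X}}(b) - \nabla\omega_{\mathcal{X}}(a),\, c - b\rangle = V(a,c) - V(a,b) - V(b,c).
\]
Applying this with $a = x$, $b = x^{+}$, $c = u$ and substituting into the variational inequality above gives $V(x,u) - V(x,x^{+}) - V(x^{+},u) \geq -\langle y,\, u - x^{+}\rangle$, that is,
\[
V(x^{+},u) \leq V(x,u) + \langle y,\, u - x^{+}\rangle - V(x,x^{+}).
\]

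Finally I would split $\langle y, u - x^{+}\rangle = \langle y, u - x\rangle + \langle y, x - x^{+}\rangle$, so that it remains to show $\langle y, x - x^{+}\rangle - V(x,x^{+}) \leq \tfrac{1}{2}\|y\|^{2}$. Since $\omega_{\mathcal{X}}$ is strongly convex with parameter $1$, we have $V(x,x^{+}) \geq \tfrac{1}{2}\|x - x^{+}\|^{2}$, while Cauchy--Schwarz together with Young's inequality gives $\langle y, x - x^{+}\rangle \leq \|y\|\,\|x - x^{+}\| \leq \tfrac{1}{2}\|y\|^{2} + \tfrac{1}{2}\|x - x^{+}\|^{2}$; adding these two estimates yields the desired bound and hence the lemma.

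I do not expect a genuine obstacle here, as this is a routine estimate. The only points needing a little care are (i) recording the optimality condition as a variational inequality rather than a gradient-zero condition, since the minimizer defining $P_{x,\mathcal{X}}(y)$ may lie on the boundary of $\mathcal{X}$, and (ii) stating and verifying the three-point identity correctly, which is where an algebra slip would be most likely; the remaining steps are one-line applications of strong convexity and Young's inequality.
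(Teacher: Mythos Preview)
Your argument is correct and is precisely the standard proof of this estimate. Note, however, that the paper does not actually prove this lemma: it simply quotes it as \cite[Lemma~2.1]{nemirovski2009robust}, so there is no in-paper proof to compare against. Your derivation---variational inequality for the prox subproblem, the Bregman three-point identity, and the strong-convexity/Young bound on the residual term---is exactly the argument given in that reference, so nothing further is needed.
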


\section{\label{generalresults}General Error Bounds for Inexact CSA}

In this section, we derive general error bounds for inexact CSA applied
to Problem (\ref{optiprob}). These error bounds form the basis of
our convergence analysis for the two specialized CSA algorithms that
we consider in the next section.

The (general) CSA algorithm works as follows. We let $\left\{ x_{k}\right\} _{k\geq1}$
denote the sequence of iterates of the algorithm, $\left\{ \gamma_{k}\right\} _{k\geq1}$
a sequence of step-sizes with all $\gamma_{k}>0$, and $\left\{ \eta_{k}\right\} _{k\geq1}$
a sequence of error tolerances for constraint violation with all $\eta_{k}>0$.
At each iteration $k\geq1$, we need to solve the cut generation problem
\begin{eqnarray}
\max_{\delta\in\Delta}g(x_{k},\delta)\label{cut}
\end{eqnarray}
to determine if $x_{k}$ is feasible or to identify any violated constraints.
After we obtain 
\[
\delta_{k}\approx\arg\max_{\delta\in\Delta}g(x_{k},\delta),
\]
CSA performs a projected subgradient step with step-size $\gamma_{k}$
along either $f'(x_{k})$ or $g'(x_{k},\delta_{k})$, depending on
whether the condition $g(x_{k},\delta_{k})\leq\eta_{k}$ is satisfied
(i.e. depending on whether the constraint violation is below our error
tolerance or not).

Let $N$ denote the total number of iterations of the algorithm. For
some $1\leq s\leq N$, we may partition the indices 
\[
I:=\{s,\ldots,N\}
\]
into two subsets: 
\[
\mathcal{B}:=\{s\leq k\leq N\mid g(x_{k},\delta_{k})\leq\eta_{k}\}\quad\mbox{and}\quad\mathcal{N}:=I\backslash\mathcal{B}.
\]
The set $\mathcal{B}$ counts those iterations within $I$ for which
the constraint violation of $x_{k}$ corresponding to $\delta_{k}\approx\arg\max_{\delta\in\Delta}g(x_{k},\delta)$
is less than our tolerance $\eta_{k}$. When the algorithm terminates,
it returns the weighted average
\[
\overline{x}_{N,s}:=\frac{\sum_{k\in\mathcal{B}}\gamma_{k}x_{k}}{\sum_{k\in\mathcal{B}}\gamma_{k}}
\]
of iterates over $\mathcal{B}$ (which only indexes those iterates
where we believe the constraint violation is small). The general inexact
CSA algorithm is summarized in Algorithm \ref{CSAinSIP}.

\begin{algorithm}
\caption{The inexact CSA algorithm for SIP}
\label{CSAinSIP} \textbf{\textcolor{black}{Input:}} Number of iterations
$N$, initial point $x_{1}\in\mathcal{X}$, error tolerances $\{\eta_{k}\}_{k\geq1}$,
step-sizes $\{\gamma_{k}\}_{k\geq1}$.

\textbf{\textcolor{black}{For}} $k=1,2,\ldots,N$ \textbf{\textcolor{black}{do}}

Select $\delta_{k}\in\Delta$ such that $\delta_{k}\approx\arg\max_{\delta\in\Delta}g(x_{k},\delta)$.

Set 
\begin{eqnarray*}
h_{k}=\left\{ \begin{array}{l}
f'(x_{k}),\quad\quad\mbox{if \ensuremath{g(x_{k},\delta_{k})\leq\eta_{k}}},\\
g'(x_{k},\delta_{k}),\quad\mbox{otherwise}.
\end{array}\right.
\end{eqnarray*}
\begin{eqnarray*}
x_{k+1}=P_{x_{k},\mathcal{X}}(\gamma_{k}h_{k}).
\end{eqnarray*}
\textbf{\textcolor{black}{end for}} 

\textbf{\textcolor{black}{Output:}} $\overline{x}_{N,s}$. 
\end{algorithm}
The cut generation problem $\max_{\delta\in\Delta}g(x,\delta)$ is
typically a non-convex optimization problem. Generally speaking, there
is no fast algorithm that can solve this problem deterministically.
In our case, the error in each iteration comes from inexact solution
of $\max_{\delta\in\Delta}g(x_{k},\delta)$. We denote the error in
cut generation as

\[
\varepsilon_{k}:=G(x_{k})-g(x_{k},\delta_{k}),\,\forall k\geq1.
\]
Note that the errors $\left\{ \varepsilon_{k}\right\} _{k\geq1}$
are always nonnegative since $G\left(x\right)\geq g\left(x,\,\delta\right)$
for all $\delta\in\Delta$ by definition. 

Below we give a specific selection of the parameters $\{\eta_{k}\}_{k\geq1}$,
$\{\gamma_{k}\}_{k\geq1}$, and $s$ to be used in Algorithm \ref{CSAinSIP}:

\begin{equation}
\eta_{k}=\frac{6(L_{f}+L_{g,\mathcal{X}})D_{\mathcal{X}}}{\sqrt{k}},\,\gamma_{k}=\frac{D_{\mathcal{X}}}{\sqrt{k}(L_{f}+L_{g,\mathcal{X}})},\,k=1,2,\ldots,N,\,s=\lceil\frac{N}{2}\rceil,\label{variableparameters}
\end{equation}
for all $N\geq1$. The following result shows that $\overline{x}_{N,s}$
is well-defined under this policy. 
\begin{lem}
\label{nonemptyB} Suppose $\left\{ x_{k}\right\} _{k\geq1}$ is generated
by Algorithm \ref{CSAinSIP} with policy (\ref{variableparameters}),
then the set $\mathcal{B}\neq\emptyset$, i.e., $\overline{x}_{N,s}$
is well-defined. 
\end{lem}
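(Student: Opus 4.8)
The plan is to argue by contradiction. Suppose $\mathcal{B}=\emptyset$. Then for every $k\in I=\{s,\dots,N\}$ the test $g(x_{k},\delta_{k})\leq\eta_{k}$ fails, so $g(x_{k},\delta_{k})>\eta_{k}$ and the algorithm uses the constraint subgradient, $h_{k}=g'(x_{k},\delta_{k})$. The idea is then to track the Bregman distance $V(x_{k},x^{*})$ from the iterate to an optimal solution $x^{*}$ of (\ref{optiprob}) along the iterations in $I$ and show it is forced strictly below zero, which is impossible.

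First I would apply Lemma~\ref{block} with $u=x^{*}$, $x=x_{k}$, and $y=\gamma_{k}h_{k}$ to obtain
\[
V(x_{k+1},x^{*})\leq V(x_{k},x^{*})+\gamma_{k}\langle g'(x_{k},\delta_{k}),x^{*}-x_{k}\rangle+\tfrac{1}{2}\gamma_{k}^{2}\|g'(x_{k},\delta_{k})\|^{2}.
\]
Next, convexity of $x\mapsto g(x,\delta_{k})$ (Assumption A4) gives $\langle g'(x_{k},\delta_{k}),x^{*}-x_{k}\rangle\leq g(x^{*},\delta_{k})-g(x_{k},\delta_{k})$; since $x^{*}$ is feasible we have $g(x^{*},\delta_{k})\leq G(x^{*})\leq0$, while $k\in\mathcal{N}$ gives $g(x_{k},\delta_{k})>\eta_{k}$, so the inner product is at most $-\eta_{k}$. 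Combining this with the subgradient bound $\|g'(x_{k},\delta_{k})\|\leq L_{g,\mathcal{X}}\leq L_{f}+L_{g,\mathcal{X}}$ (again A4) yields
\[
V(x_{k+1},x^{*})\leq V(x_{k},x^{*})-\gamma_{k}\eta_{k}+\tfrac{1}{2}\gamma_{k}^{2}(L_{f}+L_{g,\mathcal{X}})^{2}.
\]
Substituting the policy (\ref{variableparameters}), a direct computation gives $\gamma_{k}\eta_{k}=6D_{\mathcal{X}}^{2}/k$ and $\tfrac{1}{2}\gamma_{k}^{2}(L_{f}+L_{g,\mathcal{X}})^{2}=D_{\mathcal{X}}^{2}/(2k)$, so the recursion collapses to $V(x_{k+1},x^{*})\leq V(x_{k},x^{*})-\tfrac{11D_{\mathcal{X}}^{2}}{2k}$.

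Finally, I would telescope this over $k=s,\dots,N$ and use $V(x_{N+1},x^{*})\geq0$ together with $V(x_{s},x^{*})\leq D_{\mathcal{X}}^{2}=\max_{x,z\in\mathcal{X}}V(x,z)$ to get $\sum_{k=s}^{N}1/k\leq2/11$. But with $s=\lceil N/2\rceil\leq(N+1)/2$, the elementary integral estimate $\sum_{k=s}^{N}1/k\geq\int_{s}^{N+1}dt/t=\ln\frac{N+1}{s}\geq\ln2>2/11$ gives a contradiction. Hence $\mathcal{B}\neq\emptyset$, and $\overline{x}_{N,s}$ is well-defined. I do not expect a genuine obstacle here; the only things to watch are the sign of the telescoped increment — it is negative precisely because the policy makes $\eta_{k}$ dominate the $\tfrac{1}{2}\gamma_{k}^{2}\|h_{k}\|^{2}$ term, which is what the constant $6$ in $\eta_{k}$ buys — and the lower bound on the harmonic tail.
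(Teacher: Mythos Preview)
Your argument is correct. The per-step recursion, the use of feasibility of $x^{*}$ to get $\langle g'(x_{k},\delta_{k}),x^{*}-x_{k}\rangle\le -\eta_{k}$, the substitution $\gamma_{k}\eta_{k}=6D_{\mathcal{X}}^{2}/k$ and $\tfrac{1}{2}\gamma_{k}^{2}(L_{f}+L_{g,\mathcal{X}})^{2}=D_{\mathcal{X}}^{2}/(2k)$, and the harmonic-tail bound $\sum_{k=s}^{N}1/k\ge\ln 2>2/11$ all check out.

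The paper proceeds slightly differently. Rather than assuming $\mathcal{B}=\emptyset$ and telescoping directly, it first proves a general sufficient condition (its Lemma~\ref{lemforsuff}): whenever
\[
\tfrac{N-s+1}{2}\min_{k\in\mathcal{N}}\gamma_{k}\eta_{k}
> D_{\mathcal{X}}^{2}+\tfrac{1}{2}\sum_{k\in\mathcal{B}}\gamma_{k}^{2}L_{f}^{2}+\tfrac{1}{2}\sum_{k\in\mathcal{N}}\gamma_{k}^{2}L_{g,\mathcal{X}}^{2},
\]
one gets not only $\mathcal{B}\neq\emptyset$ but the stronger dichotomy ``either $|\mathcal{B}|\ge(N-s+1)/2$ or $\sum_{k\in\mathcal{B}}\gamma_{k}\langle f'(x_{k}),x_{k}-x^{*}\rangle<0$''. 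Lemma~\ref{nonemptyB} is then obtained by verifying this inequality for policy~(\ref{variableparameters}), bounding each side by $\tfrac{3}{2}D_{\mathcal{X}}^{2}$. Your contradiction argument is essentially the special case $\mathcal{B}=\emptyset$ of that lemma, carried out with a sharper harmonic-sum estimate in place of the cruder count $|\mathcal{N}|\ge(N-s+1)/2$. What the paper's route buys is modularity: the dichotomy in Lemma~\ref{lemforsuff} is reused in Proposition~\ref{converg} to control the optimality gap, so the extra generality is not wasted. What your route buys is a shorter, self-contained proof of exactly the statement asked for.
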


Now we will bound the optimality gap and constraint violation of $\bar{x}_{N,\,s}$
in terms of the errors $\left\{ \varepsilon_{k}\right\} _{k\geq1}$
from inexact cut generation. The result of Theorem \ref{varistepresult}
is online since policy (\ref{variableparameters}) does not depend
on knowing $N$ in advance, and thus we may stop or continue the algorithm
anytime. In particular, the weighted average $\overline{x}_{N,\,s}$
from Theorem \ref{varistepresult} gives decreasing weight to older
iterates $\{x_{k}\}_{k\geq1}$.
\begin{thm}
\label{varistepresult} Suppose $\left\{ x_{k}\right\} _{k\geq1}$
is generated by Algorithm \ref{CSAinSIP} with policy (\ref{variableparameters}),
then for any $N\geq1$ we have
\[
f(\overline{x}_{N,s})-f(x^{*})\leq\frac{6D_{\mathcal{X}}(L_{f}+L_{g,\mathcal{X}})}{\sqrt{N}},
\]
and

\[
G(\overline{x}_{N,s})\leq\frac{12D_{\mathcal{X}}(L_{f}+L_{g,\mathcal{X}})}{\sqrt{N}}+\frac{\sum_{k\in\mathcal{B}}\varepsilon_{k}/\sqrt{k}}{\sum_{k\in\mathcal{B}}1/\sqrt{k}}.
\]
\end{thm}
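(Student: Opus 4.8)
The plan is to run a standard projected-subgradient-with-Bregman-distance analysis, but to carefully track the two types of steps (``objective steps'' indexed by $\mathcal{B}$ and ``constraint steps'' indexed by $\mathcal{N}$) and the cut-generation errors $\varepsilon_k$. First I would apply Lemma \ref{block} at each iteration $k\in I=\{s,\dots,N\}$ with $y=\gamma_k h_k$ and $u=x^*$, giving
\[
V(x_{k+1},x^*)\le V(x_k,x^*)+\gamma_k\langle h_k,x^*-x_k\rangle+\tfrac{1}{2}\gamma_k^2\|h_k\|^2 .
\]
For $k\in\mathcal{B}$, $h_k=f'(x_k)$, and convexity of $f$ gives $\langle h_k,x^*-x_k\rangle\le f(x^*)-f(x_k)$, with $\|h_k\|\le L_f$. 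For $k\in\mathcal{N}$, $h_k=g'(x_k,\delta_k)$ and convexity of $g(\cdot,\delta_k)$ gives $\langle h_k,x^*-x_k\rangle\le g(x^*,\delta_k)-g(x_k,\delta_k)\le -g(x_k,\delta_k)<-\eta_k$ (using $g(x^*,\delta_k)\le G(x^*)\le 0$), with $\|h_k\|\le L_{g,\mathcal{X}}$. Summing the telescoping inequality over $k\in I$, rearranging, and using $V\ge 0$, $V(x_s,x^*)\le D_{\mathcal{X}}^2$, I obtain
\[
\sum_{k\in\mathcal{B}}\gamma_k\big(f(x_k)-f(x^*)\big)\;\le\;D_{\mathcal{X}}^2+\tfrac{1}{2}(L_f+L_{g,\mathcal{X}})^2\sum_{k\in I}\gamma_k^2-\sum_{k\in\mathcal{N}}\gamma_k\eta_k .
\]

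Next I would substitute the policy \eqref{variableparameters}. With $\gamma_k=D_{\mathcal{X}}/(\sqrt{k}(L_f+L_{g,\mathcal{X}}))$ and $\eta_k=6(L_f+L_{g,\mathcal{X}})D_{\mathcal{X}}/\sqrt{k}$ we get $\gamma_k\eta_k=6D_{\mathcal{X}}^2/k$ and $\tfrac12(L_f+L_{g,\mathcal{X}})^2\gamma_k^2=D_{\mathcal{X}}^2/(2k)$. Using the elementary bounds $\sum_{k=s}^N 1/k\le 1+\log(N/s)\le 1+\log 2$ (since $s=\lceil N/2\rceil$) and, crucially, a lower bound like $\sum_{k\in\mathcal{N}}1/k \le \sum_{k=s}^N 1/k$ together with $\mathcal{B}\neq\emptyset$ from Lemma \ref{nonemptyB}, the right-hand side should collapse to a clean multiple of $D_{\mathcal{X}}^2$; then dividing by $\sum_{k\in\mathcal{B}}\gamma_k$ and invoking convexity of $f$ at $\overline{x}_{N,s}$ (Jensen) yields the optimality-gap bound. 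The key quantitative step is to lower-bound $\sum_{k\in\mathcal{B}}\gamma_k$: since $\mathcal{B}\cup\mathcal{N}=I$ and the $\gamma_k\eta_k$ terms already ``pay for'' the $\mathcal{N}$ indices, one shows $\sum_{k\in\mathcal{B}}\gamma_k\ge c\sum_{k=s}^N\gamma_k\ge c'\sqrt{N}\,D_{\mathcal{X}}/(L_f+L_{g,\mathcal{X}})$ for an absolute constant, using $\sum_{k=s}^N 1/\sqrt{k}\ge (\sqrt{N}-\sqrt{s-1})\ge c''\sqrt{N}$. Matching the constants to the stated $6$ is a bookkeeping exercise.

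For the constraint-violation bound I would proceed differently. By convexity of each $g(\cdot,\delta)$ and hence of $G$ (a max of convex functions), $G(\overline{x}_{N,s})\le \big(\sum_{k\in\mathcal{B}}\gamma_k\big)^{-1}\sum_{k\in\mathcal{B}}\gamma_k G(x_k)$. Now write $G(x_k)=g(x_k,\delta_k)+\varepsilon_k$ by the definition of $\varepsilon_k$, and use that for $k\in\mathcal{B}$ we have $g(x_k,\delta_k)\le\eta_k$. Hence
\[
G(\overline{x}_{N,s})\;\le\;\frac{\sum_{k\in\mathcal{B}}\gamma_k\eta_k}{\sum_{k\in\mathcal{B}}\gamma_k}+\frac{\sum_{k\in\mathcal{B}}\gamma_k\varepsilon_k}{\sum_{k\in\mathcal{B}}\gamma_k}.
\]
Since $\gamma_k\propto 1/(\sqrt{k}(L_f+L_{g,\mathcal{X}}))$, the weights $\gamma_k$ in the ratios can be replaced by $1/\sqrt{k}$, turning the second term into exactly $\big(\sum_{k\in\mathcal{B}}\varepsilon_k/\sqrt{k}\big)/\big(\sum_{k\in\mathcal{B}}1/\sqrt{k}\big)$ as claimed; for the first term I substitute $\eta_k=6(L_f+L_{g,\mathcal{X}})D_{\mathcal{X}}/\sqrt{k}$ and use the lower bound on $\sum_{k\in\mathcal{B}}1/\sqrt k$ — but here I need to be careful, since $\eta_k/\sqrt k$ is decreasing, so I instead bound $\sum_{k\in\mathcal{B}}\eta_k/\sqrt k\le 6(L_f+L_{g,\mathcal{X}})D_{\mathcal{X}}\sum_{k=s}^N 1/k$ and divide by $\sum_{k\in\mathcal{B}}1/\sqrt k\ge c\sqrt N/(\text{const})$; combining $\sum_{k=s}^N 1/k=O(1)$ with the $\Theta(\sqrt N)$ denominator gives the $12 D_{\mathcal{X}}(L_f+L_{g,\mathcal{X}})/\sqrt N$ rate.

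The main obstacle I anticipate is getting the constants exactly right, and in particular establishing a clean, policy-specific lower bound on $\sum_{k\in\mathcal{B}}\gamma_k$ (equivalently $\sum_{k\in\mathcal{B}}1/\sqrt k$) that does not degrade when $\mathcal{N}$ is large. The leverage is the $-\sum_{k\in\mathcal{N}}\gamma_k\eta_k$ term in the summed inequality: because $\eta_k$ was chosen a constant factor ($6\times$) larger than what a pure subgradient step would need, the ``bad'' indices are over-penalized, which forces $\sum_{k\in\mathcal{B}}\gamma_k$ to carry a definite fraction of $\sum_{k=s}^N\gamma_k$. Quantifying that fraction, and checking it is compatible with the factor-$6$ and factor-$12$ constants in the statement, is the delicate part; everything else is routine telescoping, Jensen, and harmonic-sum estimates. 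This is essentially the argument of \cite{lan2016algorithms} adapted to carry the extra additive $\varepsilon_k$ terms through the constraint-violation estimate.
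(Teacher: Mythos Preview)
Your overall architecture (telescope Lemma~\ref{block}, split into $\mathcal{B}$ and $\mathcal{N}$, apply Jensen) matches the paper's. The gap is precisely where you flag ``the delicate part'': you assert that the over-penalized $-\sum_{k\in\mathcal{N}}\gamma_k\eta_k$ term ``forces $\sum_{k\in\mathcal{B}}\gamma_k$ to carry a definite fraction of $\sum_{k=s}^N\gamma_k$.'' This is not true as stated. Nothing prevents $\mathcal{B}$ from being a single index, because the iterates $x_k$ need not be feasible and hence $f(x_k)-f(x^*)$ can be negative; the summed inequality can then be satisfied with a tiny $\mathcal{B}$ and a very negative left side. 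The paper resolves this with an explicit dichotomy (Lemma~\ref{lemforsuff} and Proposition~\ref{converg}): from the summed inequality at $x=x^*$ one shows that \emph{either} $|\mathcal{B}|\ge (N-s+1)/2$, which immediately gives $\sum_{k\in\mathcal{B}}\gamma_k\ge \tfrac{N-s+1}{2}\min_{k\in\mathcal{B}}\gamma_k\ge \tfrac{\sqrt{N}D_{\mathcal X}}{2(L_f+L_{g,\mathcal X})}$, \emph{or} $\sum_{k\in\mathcal{B}}\gamma_k\langle f'(x_k),x_k-x^*\rangle<0$, in which case Jensen yields $f(\overline{x}_{N,s})<f(x^*)$ outright. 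Your intuition about over-penalization is exactly what drives the first branch of this dichotomy, but you must make the case split explicit.

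For the constraint-violation bound the same misunderstanding reappears: you propose to bound $\sum_{k\in\mathcal{B}}\eta_k/\sqrt{k}$ from above over all of $I$ and then divide by a lower bound $\sum_{k\in\mathcal{B}}1/\sqrt{k}\ge c\sqrt{N}$, but that lower bound is false when $\mathcal{B}$ is small. The paper avoids this entirely: the ratio $\frac{\sum_{k\in\mathcal{B}}\gamma_k\eta_k}{\sum_{k\in\mathcal{B}}\gamma_k}$ is a convex combination of the $\eta_k$'s, so it is controlled pointwise by $\gamma_k\eta_k\le 12D_{\mathcal X}^2/N$ and $\gamma_k\ge D_{\mathcal X}/(\sqrt{N}(L_f+L_{g,\mathcal X}))$ for $k\in\{\lceil N/2\rceil,\dots,N\}$, with no cardinality information on $\mathcal{B}$ needed beyond $\mathcal{B}\neq\emptyset$. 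Your treatment of the $\varepsilon_k$ term (cancelling the common factor to get $1/\sqrt{k}$ weights) is correct and is exactly what the paper does.
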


\begin{rem}
The bound on the optimality gap does not depend on the errors $\{\varepsilon_{k}\}_{k\geq1}$
in cut generation, since objective function evaluations are error
free (in contrast to inexact evaluation of the constraint function
$G\left(x\right)$). 
\end{rem}

We can improve the $O\left(1/\sqrt{N}\right)$ convergence rate when
the objective function $f(\cdot)$ and the constraint functions $\left\{ g(\cdot,\delta)\right\} _{\delta\in\Delta}$
are all strongly convex. To proceed, we introduce a new assumption
on the quadratic growth of the prox-function $V(\cdot,\cdot)$.
\begin{assumption}
\label{strongconvex_and_L} (i) The objective function $f$ is strongly
convex with parameter $\mu_{f}>0$, and the constraint functions $g(\cdot,\delta)$
are all strongly convex with parameter $\mu_{g}>0$ (uniformly in
all $\delta\in\Delta$). 

(ii) There exists $L>0$, such that $V(x,z)\leq\frac{L}{2}\|x-z\|^{2},\forall x,z\in\mathcal{X}$. 
\end{assumption}

The constants in Assumption \ref{strongconvex_and_L} appear in our
parameter selection policy for the strongly convex case. For all $k=1,2,\ldots,N$,
let $\gamma_{k}$ be the step-sizes used in our algorithms, and denote
\[
a_{k}=\left\{ \begin{array}{l}
\frac{\mu_{f}\gamma_{k}}{L},\quad\mbox{if \ensuremath{g}(\ensuremath{x_{k}},\ensuremath{\delta_{k}})\ensuremath{\leq\eta_{k}}},\\
\frac{\mu_{g}\gamma_{k}}{L},\quad\text{otherwise},
\end{array}\right.A_{k}=\left\{ \begin{array}{l}
1,\quad\quad\quad\quad\quad\quad k=1,\\
(1-a_{k})A_{k-1},\quad2\leq k\leq N,
\end{array}\right.\mbox{and}\quad\rho_{k}=\frac{\gamma_{k}}{A_{k}}.
\]
For the strongly convex case, the output of Algorithm \ref{CSAinSIP}
is modified to 
\[
\overline{x}_{N,s}=\frac{\sum_{k\in\mathcal{B}}\rho_{k}x_{k}}{\sum_{k\in\mathcal{B}}\rho_{k}}.
\]
Our new policy is given as follows: for $k=1,2,\ldots,N,$

\begin{equation}
\eta_{k}=\frac{8L}{N}\max\left\{ \mu_{f},\mu_{g}\right\} \max\left\{ \frac{L_{f}^{2}}{\mu_{f}^{2}},\frac{L_{g,\mathcal{X}}^{2}}{\mu_{g}^{2}}\right\} ,\,\gamma_{k}=\left\{ \begin{array}{l}
\frac{2L}{\mu_{f}(k+1)},\quad\mbox{if \ensuremath{g}(\ensuremath{x_{k}},\ensuremath{\delta_{k}})\ensuremath{\leq\eta_{k}}},\\
\frac{2L}{\mu_{g}(k+1)},\quad\text{otherwise},
\end{array}\right.\,\;s=1.\label{stepsizeinstrongconvex}
\end{equation}

The following result shows that $\overline{x}_{N,s}$ is well-defined
for this policy as well.
\begin{lem}
\label{nonemptyBstronglyconvex} Suppose Assumption \ref{strongconvex_and_L}
holds. Suppose $\left\{ x_{k}\right\} _{k\geq1}$ is generated by
Algorithm \ref{CSAinSIP} with policy (\ref{stepsizeinstrongconvex}),
then the set $\mathcal{B}\neq\emptyset$, i.e., $\overline{x}_{N,s}$
is well-defined.
\end{lem}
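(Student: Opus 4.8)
The plan is to argue by contradiction, in the same spirit as the proof of Lemma~\ref{nonemptyB} but adapted to the strongly convex regime. Assume $\mathcal{B}=\emptyset$. Since $s=1$ in policy~(\ref{stepsizeinstrongconvex}), this forces $g(x_k,\delta_k)>\eta_k$ for every $k=1,\dots,N$, so Algorithm~\ref{CSAinSIP} uses the constraint subgradient $h_k=g'(x_k,\delta_k)$ at every iteration; hence $a_k=\mu_g\gamma_k/L$ with $\gamma_k=\frac{2L}{\mu_g(k+1)}$, and one computes the closed forms $a_k=\frac{2}{k+1}$, $A_k=\frac{2}{k(k+1)}>0$, $\rho_k=\gamma_k/A_k=\frac{Lk}{\mu_g}$ (note $a_1=1$, while $A_1=1$ by the first-case definition). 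Also $\eta_k\equiv\eta_1$ is constant in $k$ under~(\ref{stepsizeinstrongconvex}).

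The first step is a one-step contraction estimate for the prox-function. Applying Lemma~\ref{block} with $y=\gamma_k h_k$ and $u=x^*$, then invoking strong convexity of $g(\cdot,\delta_k)$ from Assumption~\ref{strongconvex_and_L}(i), the feasibility $g(x^*,\delta_k)\le G(x^*)\le0$, the standing hypothesis $g(x_k,\delta_k)>\eta_k$, the Lipschitz bound $\|g'(x_k,\delta_k)\|\le L_{g,\mathcal{X}}$ from Assumption~\textbf{A4}, and the quadratic-growth bound $\|x^*-x_k\|^2\ge\frac{2}{L}V(x_k,x^*)$ from Assumption~\ref{strongconvex_and_L}(ii), I would obtain the recursion
\[
V(x_{k+1},x^*)\le(1-a_k)V(x_k,x^*)-\gamma_k\eta_k+\frac{\gamma_k^2 L_{g,\mathcal{X}}^2}{2},\qquad 1\le k\le N.
\]
Dividing through by $A_k$, using $(1-a_k)/A_k=1/A_{k-1}$ for $k\ge2$ (and $1-a_1=0$, so no $V(x_1,x^*)$ term survives at $k=1$), summing over $k=1,\dots,N$, and using $V(\cdot,\cdot)\ge0$ and $A_N>0$, this telescopes to $0\le-\sum_{k=1}^N\rho_k\eta_k+\sum_{k=1}^N\frac{\gamma_k\rho_k L_{g,\mathcal{X}}^2}{2}$.

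The contradiction then follows from inserting the closed forms: $\sum_{k=1}^N\rho_k\eta_k=\frac{L\eta_1}{\mu_g}\cdot\frac{N(N+1)}{2}\ge\frac{L\eta_1 N^2}{2\mu_g}$, while $\frac{\gamma_k\rho_k L_{g,\mathcal{X}}^2}{2}=\frac{L^2 L_{g,\mathcal{X}}^2}{\mu_g^2}\cdot\frac{k}{k+1}\le\frac{L^2 L_{g,\mathcal{X}}^2}{\mu_g^2}$, so $\sum_{k=1}^N\frac{\gamma_k\rho_k L_{g,\mathcal{X}}^2}{2}\le\frac{L^2 L_{g,\mathcal{X}}^2 N}{\mu_g^2}$; and keeping only the $g$-terms in the two maxima defining $\eta_1$ in~(\ref{stepsizeinstrongconvex}) yields $\eta_1\ge\frac{8L L_{g,\mathcal{X}}^2}{N\mu_g}$, which makes the first sum at least $\frac{4L^2 L_{g,\mathcal{X}}^2 N}{\mu_g^2}$ --- strictly larger than the bound on the second sum, the desired contradiction. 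I expect the only delicate point, more bookkeeping than genuine obstacle, to be the index $k=1$, where $a_1=1$ but $A_1$ is defined separately as $1$; one must check that the telescoping still closes there, which it does precisely because the coefficient $1-a_1$ in front of $V(x_1,x^*)$ vanishes. Conceptually this is just the strongly convex counterpart of Lemma~\ref{nonemptyB}: the tolerance $\eta_k$ in~(\ref{stepsizeinstrongconvex}) is calibrated so large that, were $x_k$ to violate it at every iteration, the nonnegative quantity $V(x_k,x^*)$ would be forced below zero.
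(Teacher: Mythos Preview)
Your proof is correct and follows essentially the same mechanism as the paper: the one-step contraction estimate you derive coincides with what the paper obtains (in its Proposition establishing inequality~(\ref{recursforstrong})) when all indices lie in $\mathcal{N}$, and your closed-form computations of $a_k$, $A_k$, $\rho_k$, $\rho_k\gamma_k$ match those in the paper's proof of Lemma~\ref{nonemptyBstronglyconvex}.

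The organizational difference is that the paper first proves a general sufficient condition (Lemma~\ref{Lemsuffstrong}): if the minimum of $\sum_{k\in\mathcal{A}}\rho_k\eta_k$ over subsets $\mathcal{A}\subset I$ of size $\lceil(N-s+1)/2\rceil$ exceeds a certain right-hand side, then either $|\mathcal{B}|\ge(N-s+1)/2$ or $\sum_{k\in\mathcal{B}}\rho_k(f(x_k)-f(x^*))<0$; it then verifies this condition under policy~(\ref{stepsizeinstrongconvex}). Your argument specializes directly to the case $\mathcal{B}=\emptyset$ and skips the intermediate lemma. This is more economical for the statement at hand, but the paper's route yields the stronger dichotomy of Lemma~\ref{Lemsuffstrong}, which it later reuses to bound the optimality gap in Proposition~\ref{convergstrong}. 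So your proof is a clean specialization; nothing is missing.
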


Now we give an improved error bound for inexact CSA under policy (\ref{stepsizeinstrongconvex})
for the strongly convex case.
\begin{thm}
\label{strongconvexresult} Suppose Assumption \ref{strongconvex_and_L}
holds. Let $\left\{ x_{k}\right\} _{k\geq1}$ be generated by Algorithm
\ref{CSAinSIP} with policy (\ref{stepsizeinstrongconvex}), then
for any $N\geq1$ we have
\[
f(\overline{x}_{N,s})-f(x^{*})\leq\frac{8L}{N+1}\max\left\{ \mu_{f},\mu_{g}\right\} \max\left\{ \frac{L_{f}^{2}}{\mu_{f}^{2}},\frac{L_{g,\mathcal{X}}^{2}}{\mu_{g}^{2}}\right\} ,
\]
and
\[
G(\overline{x}_{N,s})\leq\frac{8L}{N}\max\left\{ \mu_{f},\mu_{g}\right\} \max\left\{ \frac{L_{f}^{2}}{\mu_{f}^{2}},\frac{L_{g,\mathcal{X}}^{2}}{\mu_{g}^{2}}\right\} +\frac{\sum_{k\in\mathcal{B}}k\,\varepsilon_{k}}{\sum_{k\in\mathcal{B}}k}.
\]
\end{thm}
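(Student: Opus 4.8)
The plan is to run the standard CSA / mirror-descent telescoping argument, exploiting that policy (\ref{stepsizeinstrongconvex}) makes the recursion coefficients collapse to a clean closed form. First I would apply Lemma \ref{block} with $x=x_k$, $y=\gamma_k h_k$ and $u=x^{*}$ to get the one-step inequality
\[
V(x_{k+1},x^{*})\le V(x_k,x^{*})+\gamma_k\langle h_k,x^{*}-x_k\rangle+\tfrac{1}{2}\gamma_k^{2}\|h_k\|^{2},
\]
and then split on the two cases defining $h_k$. For $k\in\mathcal{B}$, strong convexity of $f$ with parameter $\mu_f$ gives $\langle f'(x_k),x^{*}-x_k\rangle\le f(x^{*})-f(x_k)-\tfrac{\mu_f}{2}\|x^{*}-x_k\|^{2}$, and Assumption \ref{strongconvex_and_L}(ii) lets me replace $-\tfrac{\mu_f}{2}\|x^{*}-x_k\|^{2}$ by $-\tfrac{\mu_f}{L}V(x_k,x^{*})$; also $\|f'(x_k)\|\le L_f$. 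For $k\in\mathcal{N}$, strong convexity of $g(\cdot,\delta_k)$, feasibility $g(x^{*},\delta_k)\le G(x^{*})\le 0$, and the defining inequality $g(x_k,\delta_k)>\eta_k$ of $\mathcal{N}$ give $\langle g'(x_k,\delta_k),x^{*}-x_k\rangle\le -\eta_k-\tfrac{\mu_g}{L}V(x_k,x^{*})$, with $\|g'(x_k,\delta_k)\|\le L_{g,\mathcal{X}}$. In either case I obtain a recursion $V(x_{k+1},x^{*})\le(1-a_k)V(x_k,x^{*})+\gamma_k r_k+\tfrac{1}{2}\gamma_k^{2}\|h_k\|^{2}$ with $a_k$ as in the statement, $r_k=f(x^{*})-f(x_k)$ on $\mathcal{B}$, and $r_k\le-\eta_k$ on $\mathcal{N}$.

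The key algebraic point is that under policy (\ref{stepsizeinstrongconvex}) one has $a_k=\tfrac{2}{k+1}$ in \emph{both} branches, hence $A_k=\tfrac{2}{k(k+1)}$ and $\rho_k=\gamma_k/A_k$ equals $Lk/\mu_f$ on $\mathcal{B}$ and $Lk/\mu_g$ on $\mathcal{N}$; moreover $a_1=1$, so the $V(x_1,x^{*})$ term drops out. Dividing the recursion by $A_k$, using $\tfrac{1-a_k}{A_k}=\tfrac{1}{A_{k-1}}$, and summing over $k=1,\ldots,N$ (recall $s=1$) telescopes to
\[
\frac{V(x_{N+1},x^{*})}{A_N}+\sum_{k\in\mathcal{B}}\rho_k\big(f(x_k)-f(x^{*})\big)+\sum_{k\in\mathcal{N}}\rho_k\eta_k\le\sum_{k=1}^{N}\frac{\gamma_k\rho_k}{2}\|h_k\|^{2}.
\]
A direct computation with the explicit $\gamma_k,\rho_k$ shows each summand on the right is at most $L^{2}\max\{L_f^{2}/\mu_f^{2},L_{g,\mathcal{X}}^{2}/\mu_g^{2}\}$, so the right-hand side is at most $NL^{2}\max\{L_f^{2}/\mu_f^{2},L_{g,\mathcal{X}}^{2}/\mu_g^{2}\}$.

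For the constraint-violation bound I would use convexity of $g(\cdot,\delta)$: $g(\overline{x}_{N,s},\delta)\le\big(\sum_{k\in\mathcal{B}}\rho_k g(x_k,\delta)\big)/\big(\sum_{k\in\mathcal{B}}\rho_k\big)\le\big(\sum_{k\in\mathcal{B}}\rho_k G(x_k)\big)/\big(\sum_{k\in\mathcal{B}}\rho_k\big)$; taking the maximum over $\delta$ and using $G(x_k)=g(x_k,\delta_k)+\varepsilon_k\le\eta_k+\varepsilon_k$ for $k\in\mathcal{B}$, together with the facts that $\eta_k$ is the constant $\tfrac{8L}{N}\max\{\mu_f,\mu_g\}\max\{L_f^{2}/\mu_f^{2},L_{g,\mathcal{X}}^{2}/\mu_g^{2}\}$ and $\rho_k=Lk/\mu_f$ on $\mathcal{B}$, the weighted averages collapse to exactly the claimed expression. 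This part only needs $\mathcal{B}\neq\emptyset$, which is Lemma \ref{nonemptyBstronglyconvex}.

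For the optimality gap I would argue by dichotomy, which I expect to be the crux of the proof. By convexity of $f$, $f(\overline{x}_{N,s})-f(x^{*})\le\big(\sum_{k\in\mathcal{B}}\rho_k(f(x_k)-f(x^{*}))\big)/\big(\sum_{k\in\mathcal{B}}\rho_k\big)$; if the numerator is $\le 0$ the claim is immediate, so assume it is positive. Then the telescoped inequality forces $\sum_{k\in\mathcal{N}}\rho_k\eta_k\le NL^{2}\max\{L_f^{2}/\mu_f^{2},L_{g,\mathcal{X}}^{2}/\mu_g^{2}\}$, and since $\eta_k$ is that fixed constant of order $1/N$ while $\rho_k=Lk/\mu_g$ on $\mathcal{N}$, this yields $\sum_{k\in\mathcal{N}}k\le N^{2}/8$; hence $\sum_{k\in\mathcal{B}}k=\tfrac{N(N+1)}{2}-\sum_{k\in\mathcal{N}}k\ge\tfrac{3N(N+1)}{8}$ and so $\sum_{k\in\mathcal{B}}\rho_k\ge\tfrac{3LN(N+1)}{8\max\{\mu_f,\mu_g\}}$. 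Combined with the numerator bound $\sum_{k\in\mathcal{B}}\rho_k(f(x_k)-f(x^{*}))\le NL^{2}\max\{L_f^{2}/\mu_f^{2},L_{g,\mathcal{X}}^{2}/\mu_g^{2}\}$ (drop the nonnegative $\mathcal{N}$-term), dividing gives $f(\overline{x}_{N,s})-f(x^{*})\le\tfrac{8L}{3(N+1)}\max\{\mu_f,\mu_g\}\max\{L_f^{2}/\mu_f^{2},L_{g,\mathcal{X}}^{2}/\mu_g^{2}\}$, which implies the stated bound. The delicate point throughout is matching the numerical constants so that $\sum_{k\in\mathcal{N}}k$ is provably a fixed fraction of $\sum_{k=1}^{N}k$ — this is exactly what the constant $8$ built into $\eta_k$ in policy (\ref{stepsizeinstrongconvex}) is for.
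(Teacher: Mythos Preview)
Your argument is correct and follows the same skeleton as the paper: a one-step mirror-descent inequality from Lemma \ref{block} combined with strong convexity, then telescoping after dividing by $A_k$ (the paper packages this as the recursion (\ref{recursforstrong}) via Lemma \ref{triangle}), and finally a dichotomy on the sign of $\sum_{k\in\mathcal{B}}\rho_k(f(x_k)-f(x^{*}))$ to control the optimality gap, while the constraint-violation bound comes straight from convexity of $G$ and $G(x_k)\le\eta_k+\varepsilon_k$ on $\mathcal{B}$.

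The one genuine difference is in the dichotomy step you flag as the crux. The paper (Lemma \ref{Lemsuffstrong} and Proposition \ref{convergstrong}) argues via \emph{cardinality}: if the weighted objective sum is nonnegative, it shows $|\mathcal{B}|\ge N/2$, and then lower-bounds $\sum_{k\in\mathcal{B}}\rho_k$ by the minimum of $\sum_{k\in\mathcal{A}}\rho_k$ over all $\mathcal{A}$ of size $\lceil N/2\rceil$, namely $\sum_{k=1}^{\lceil N/2\rceil}Lk/\max\{\mu_f,\mu_g\}\ge LN(N+1)/(8\max\{\mu_f,\mu_g\})$. You instead bound $\sum_{k\in\mathcal{N}}k$ directly from $\sum_{k\in\mathcal{N}}\rho_k\eta_k\le NL^{2}\max\{L_f^{2}/\mu_f^{2},L_{g,\mathcal{X}}^{2}/\mu_g^{2}\}$, which yields $\sum_{k\in\mathcal{B}}k\ge 3N(N+1)/8$ and hence a constant $8/3$ in place of the paper's $8$. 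Your route is slightly sharper and avoids the separate packaging into Lemma \ref{Lemsuffstrong}/Proposition \ref{convergstrong}; the paper's cardinality argument is a bit more modular (it gives the general bound (\ref{convgstr1}) before specializing to policy (\ref{stepsizeinstrongconvex})). Either way the stated bound follows.
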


\begin{rem}
In the strongly convex case, the convergence rate may be improved
to $O\left(1/N\right)$ if the errors in cut generation decrease at
appropriate rate. 
\end{rem}

\section{\label{examples} Random Constraint Sampling}

As we have already pointed out, the cut generation Problem (\ref{cut})
is a general nonlinear non-convex optimization problem, and there
is no fast algorithm that can solve such a problem deterministically.
In this section, we describe two random constraint sampling schemes
that can approximately solve the cut generation problem. The first
scheme is based on sampling from a fixed probability distribution
(Subsection \ref{fixed sampling implementation}), while the second
scheme is based on sampling adaptively from a probability distribution
that is updated in each iteration based on the current iterate (Subsection
\ref{adaptive sampling implementation}).

\subsection{\label{fixed sampling implementation}Fixed Constraint Sampling }

In this subsection, we approximately solve the cut generation Problem
(\ref{cut}) by sampling from a fixed distribution on $\Delta$. To
begin, we take a probability distribution $Q$ on $\Delta$ as user
input. To solve Problem (\ref{cut}) at iteration $k\geq1$, we let
$\delta_{k}^{(1)},\delta_{k}^{(2)},\ldots,\delta_{k}^{(M_{k})}$ (where
$M_{k}\geq1$ is the sample size for all $k\geq1$) be independent
identically distributed (i.i.d.) samples from $\Delta$ generated
according to $Q$. Then, we define 
\[
\delta_{k}\in\arg\max_{i=1,\ldots,\,M_{k}}g\left(x_{k},\,\delta_{k}^{\left(i\right)}\right)
\]
to be the element among $\left\{ \delta_{k}^{(i)}\right\} _{i=1}^{M_{k}}$
which maximizes $\left\{ g\left(x_{k},\,\delta_{k}^{\left(i\right)}\right)\right\} _{i=1}^{M_{k}}$.

We need the following assumption on the sampling distribution $Q$. 
\begin{assumption}
\label{strictlyincr} There exists a strictly increasing function
$\varphi:\mathbb{R}_{+}\rightarrow[0,1]$ such that $Q\{B_{r}(\delta)\}\geq\varphi(r)$,
for all $\delta\in\Delta$ and all open balls $B_{r}(\delta)\subset\Delta$.
\end{assumption}

The above assumption means that $Q$ has support on all of $\Delta$,
it also appears in Proposition 3.8 of \cite{esfahani2015performance}.
For more discussion, the reader is referred to Assumption 3.1 of \cite{kanamori2012worst}.

Intuitively, as long as the number of samples $M$ is large enough,
we expect $\max_{1\leq i\leq M}g(x,\delta^{(i)})$ will be close to
$G(x)$ with high probability with respect to $Q^{M}$. We have a
result in expectation for the approximation quality. For $\varepsilon$,
$\beta$ in $(0,1)$, we define
\[
M(\varepsilon,\beta):=\lceil\frac{\ln\beta}{\ln(1-\varepsilon)}\rceil,
\]
which will appear in the next result to denote the threshold of sample
size. Denote the lower bound and upper bound of $g(x,\delta)$ over
$x\in\mathcal{X},\delta\in\Delta$ as $\underline{M}$ and $\overline{M}$
(due to the continuity of $(x,\delta)\rightarrow g(x,\delta)$, and
the compactness of $\mathcal{X}$ and $\Delta$), respectively, i.e.,
\[
\underline{M}\leq\min_{x\in\mathcal{X},\delta\in\Delta}g(x,\delta)\leq\max_{x\in\mathcal{X},\delta\in\Delta}g(x,\delta)\leq\overline{M}.
\]

\begin{prop}
\label{keyrandcut-inexpectation} Suppose Assumption \ref{strictlyincr}
holds. Given $\epsilon>0$, for $M\geq M(\varphi(\frac{\epsilon}{2L_{g,\Delta}}),\frac{\epsilon}{2(\overline{M}-\underline{M})})$
i.i.d. samples generated from $Q$, we have $\mathbb{E}_{Q^{M}}\left[\max_{1\leq i\leq M}g(x,\delta^{(i)})\right]\geq G(x)-\epsilon$.
\end{prop}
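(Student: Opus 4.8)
The plan is to fix $x \in \mathcal{X}$ and control the gap $G(x) - \max_{1 \le i \le M} g(x, \delta^{(i)})$, which is a nonnegative random variable bounded above by $\overline{M} - \underline{M}$. The first step is to pass from an ``approximate maximizer'' statement to a ``ball-hitting'' statement: let $\delta^\ast(x) \in \arg\max_{\delta \in \Delta} g(x,\delta)$ attain $G(x)$ (it exists by compactness of $\Delta$ and continuity of $g(x,\cdot)$). Using Assumption A5, for any radius $r$, if some sample $\delta^{(i)}$ lands in $B_r(\delta^\ast(x))$, then $g(x,\delta^{(i)}) \ge G(x) - L_{g,\Delta}\, r$, hence $\max_{1\le i\le M} g(x,\delta^{(i)}) \ge G(x) - L_{g,\Delta}\, r$. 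So the ``bad event'' that the max falls short of $G(x) - L_{g,\Delta}\, r$ is contained in the event that \emph{no} sample hits $B_r(\delta^\ast(x))$.

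The second step bounds the probability of that bad event. By Assumption \ref{strictlyincr}, a single sample lands in $B_r(\delta^\ast(x))$ with probability at least $\varphi(r)$, so by independence the probability that none of the $M$ i.i.d.\ samples does is at most $(1 - \varphi(r))^M$. The third step converts this tail bound into an expectation bound by splitting on the bad event: writing $Y := G(x) - \max_{1\le i\le M} g(x,\delta^{(i)}) \in [0, \overline{M}-\underline{M}]$, we have
\[
\mathbb{E}_{Q^M}[Y] \le L_{g,\Delta}\, r \cdot 1 + (\overline{M}-\underline{M}) \cdot (1-\varphi(r))^M .
\]
Now choose $r := \tfrac{\epsilon}{2 L_{g,\Delta}}$ so the first term is $\epsilon/2$, and require the second term to be at most $\epsilon/2$, i.e.\ $(1 - \varphi(r))^M \le \tfrac{\epsilon}{2(\overline{M}-\underline{M})}$. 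Taking logarithms (note $\ln(1-\varphi(r)) < 0$) this is exactly $M \ge \tfrac{\ln(\epsilon / (2(\overline{M}-\underline{M})))}{\ln(1-\varphi(r))} = \tfrac{\ln \beta}{\ln(1-\varepsilon)}$ with $\varepsilon = \varphi(\tfrac{\epsilon}{2L_{g,\Delta}})$ and $\beta = \tfrac{\epsilon}{2(\overline{M}-\underline{M})}$, which is precisely the threshold $M(\varphi(\tfrac{\epsilon}{2L_{g,\Delta}}), \tfrac{\epsilon}{2(\overline{M}-\underline{M})})$ after taking the ceiling. Combining, $\mathbb{E}_{Q^M}[Y] \le \epsilon$, which rearranges to the claim.

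I expect the main (minor) obstacle to be bookkeeping around edge cases rather than anything deep: one should note that $\beta = \tfrac{\epsilon}{2(\overline{M}-\underline{M})}$ must lie in $(0,1)$ for $\ln\beta$ to be negative and $M(\varepsilon,\beta)$ to be meaningful, which may implicitly require $\epsilon$ small (or one argues that for large $\epsilon$ the bound $G(x) - \epsilon \le \underline{M} \le g(x,\delta^{(i)})$ holds trivially); similarly one needs $\varphi(r) \in (0,1)$ so that $(1-\varphi(r))^M$ genuinely decays, and one should confirm $B_r(\delta^\ast(x))$ can be taken inside $\Delta$ or that Assumption \ref{strictlyincr} is meant relative to $\Delta$ so the bound still applies. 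The substantive content is entirely the two-step reduction (approximate-max $\Rightarrow$ ball-hit $\Rightarrow$ probabilistic covering) plus the elementary tail-to-expectation split; no concentration inequality beyond the union-free independence bound is needed.
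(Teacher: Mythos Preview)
Your proof is correct and follows essentially the same approach as the paper: obtain a high-probability bound on the gap $G(x)-\max_{i} g(x,\delta^{(i)})$ via the Lipschitz/ball-hitting argument and independence, then convert it to an expectation bound by splitting on the bad event using the crude range bound $\overline{M}-\underline{M}$. The only cosmetic difference is that the paper packages the tail bound as a separate result (Proposition~\ref{keyrandcut}, via the ULB framework of Propositions~\ref{keyresult}--\ref{ULBexample}), whereas you derive it directly from Assumption~\ref{strictlyincr}.
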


We now investigate the convergence of inexact CSA based on this fixed
sampling scheme. We define 
\[
\mathcal{Q}=\mathcal{Q}(\left\{ M_{k}\right\} _{k\in\mathcal{B}}):=\times_{k\in\mathcal{B}}Q^{M_{k}}
\]
to be the probability distribution of the samples $\Big\{\{\delta_{k}^{(i)}\}_{i=1}^{M_{k}}\Big\}_{k\in\mathcal{B}}$
on the space $\times_{k\in\mathcal{B}}\Delta^{M_{k}}$.
\begin{thm}
\label{randvaristep} Suppose Assumption \ref{strictlyincr} holds.
Suppose $\left\{ x_{k}\right\} _{k\geq1}$ is generated by Algorithm
\ref{CSAinSIP} under policy (\ref{variableparameters}). Take $\epsilon_{k}=(L_{f}+L_{g,\mathcal{X}})D_{\mathcal{X}}/\sqrt{k}$,
and $M_{k}\geq M(\varphi(\frac{\epsilon_{k}}{2L_{g,\Delta}}),\frac{\epsilon_{k}}{2(\overline{M}-\underline{M})})$
for all $k\geq1$. Then, for any $N\geq1$, we have
\begin{eqnarray*}
f(\overline{x}_{N,s})-f(x^{*})\leq\frac{6D_{\mathcal{X}}(L_{f}+L_{g,\mathcal{X}})}{\sqrt{N}},
\end{eqnarray*}
and 
\begin{eqnarray*}
\mathbb{E}_{\mathcal{Q}}\left[G(\overline{x}_{N,s})\right]\leq\frac{14D_{\mathcal{X}}(L_{f}+L_{g,\mathcal{X}})}{\sqrt{N}}.
\end{eqnarray*}
\end{thm}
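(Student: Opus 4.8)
The plan is to combine the deterministic bound of Theorem~\ref{varistepresult} with the in-expectation cut-quality guarantee of Proposition~\ref{keyrandcut-inexpectation}, applied conditionally at each iteration. The optimality gap bound is immediate: Theorem~\ref{varistepresult} shows $f(\overline{x}_{N,s})-f(x^{*})\leq 6D_{\mathcal{X}}(L_{f}+L_{g,\mathcal{X}})/\sqrt{N}$ deterministically, regardless of the cut errors, so nothing random needs to be said about it. The work is entirely in the constraint-violation bound, where Theorem~\ref{varistepresult} already gives
\[
G(\overline{x}_{N,s})\leq\frac{12D_{\mathcal{X}}(L_{f}+L_{g,\mathcal{X}})}{\sqrt{N}}+\frac{\sum_{k\in\mathcal{B}}\varepsilon_{k}/\sqrt{k}}{\sum_{k\in\mathcal{B}}1/\sqrt{k}},
\]
so it suffices to show $\mathbb{E}_{\mathcal{Q}}$ of the weighted-error term is at most $2D_{\mathcal{X}}(L_{f}+L_{g,\mathcal{X}})/\sqrt{N}$.

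First I would handle the conditioning carefully. The index set $\mathcal{B}$ and the membership of each $k$ in $\mathcal{B}$ are themselves random (they depend on the samples drawn in iterations $1,\dots,k$), so one cannot simply pull the expectation inside the sum naively. The clean way is to note that $x_k$ is a deterministic function of the samples $\{\{\delta_j^{(i)}\}_{i=1}^{M_j}\}_{j<k}$ drawn before iteration $k$; conditioning on those, $x_k$ is fixed, and the fresh i.i.d.\ draws $\{\delta_k^{(i)}\}_{i=1}^{M_k}$ are independent of it. Since $M_k\geq M(\varphi(\tfrac{\epsilon_k}{2L_{g,\Delta}}),\tfrac{\epsilon_k}{2(\overline{M}-\underline{M})})$, Proposition~\ref{keyrandcut-inexpectation} applied with $x=x_k$ and $\epsilon=\epsilon_k$ gives $\mathbb{E}[\,\max_{1\leq i\leq M_k}g(x_k,\delta_k^{(i)})\mid \mathcal{F}_{k-1}]\geq G(x_k)-\epsilon_k$, i.e.\ $\mathbb{E}[\varepsilon_k\mid\mathcal{F}_{k-1}]\leq\epsilon_k$, where $\mathcal{F}_{k-1}$ is the $\sigma$-algebra generated by the first $k-1$ rounds of sampling. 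The indicator $\mathbf{1}\{k\in\mathcal{B}\}$ is $\mathcal{F}_{k-1}$-measurable? Not quite — whether $g(x_k,\delta_k)\leq\eta_k$ depends on the round-$k$ samples. To avoid this subtlety I would instead bound
\[
\frac{\sum_{k\in\mathcal{B}}\varepsilon_{k}/\sqrt{k}}{\sum_{k\in\mathcal{B}}1/\sqrt{k}}
\]
from above by observing that $\mathcal{B}\subseteq I=\{s,\dots,N\}$ and all terms are nonnegative, but this does not immediately work because the denominator shrinks. The better route: since each $\varepsilon_k\geq0$ and the weights $1/\sqrt{k}$ for $k\in\mathcal{B}$ are a subset of $\{1/\sqrt{k}:k\in I\}$ with $\mathcal{B}\neq\emptyset$ (Lemma~\ref{nonemptyB}), one has $\sum_{k\in\mathcal{B}}1/\sqrt{k}\geq 1/\sqrt{N}$, hence the ratio is at most $\sqrt{N}\sum_{k\in\mathcal{B}}\varepsilon_k/\sqrt{k}\leq\sqrt{N}\sum_{k=s}^{N}\varepsilon_k/\sqrt{k}$, dropping the random index set entirely and removing the conditioning problem. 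Then $\mathbb{E}_{\mathcal{Q}}[\varepsilon_k]\leq\epsilon_k=(L_f+L_{g,\mathcal{X}})D_{\mathcal{X}}/\sqrt{k}$ by the tower property with $\mathcal{F}_{k-1}$, giving $\mathbb{E}_{\mathcal{Q}}[\text{ratio}]\leq\sqrt{N}(L_f+L_{g,\mathcal{X}})D_{\mathcal{X}}\sum_{k=s}^{N}1/k$.

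The final arithmetic step is to check $\sqrt{N}\sum_{k=s}^{N}1/k\leq 2/\sqrt{N}$ with $s=\lceil N/2\rceil$, i.e.\ $\sum_{k=\lceil N/2\rceil}^{N}1/k\leq 2/N$ — but this is false for large $N$ (the sum is $\approx\ln 2$). So the crude denominator bound $\sum_{k\in\mathcal{B}}1/\sqrt{k}\geq1/\sqrt{N}$ is too lossy, and \textbf{this is the main obstacle}: one genuinely needs $\sum_{k\in\mathcal{B}}1/\sqrt{k}$ to be of order $\sqrt{N}$, which requires knowing $\mathcal{B}$ contains a constant fraction of $\{s,\dots,N\}$. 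The resolution is to revisit how Lemma~\ref{nonemptyB} / Theorem~\ref{varistepresult} are proved: the argument there must already show that $\sum_{k\in\mathcal{N}}\gamma_k$ is controlled, equivalently $\sum_{k\in\mathcal{B}}\gamma_k\geq c\sum_{k\in I}\gamma_k$ for an explicit constant, and since $\gamma_k\propto1/\sqrt{k}$ this is exactly $\sum_{k\in\mathcal{B}}1/\sqrt{k}\geq c\sum_{k=s}^N1/\sqrt{k}\geq c'\sqrt{N}$. Granting that (it is part of the machinery behind Theorem~\ref{varistepresult}), the ratio's expectation is bounded by $\mathbb{E}_{\mathcal{Q}}[\sum_{k\in\mathcal{B}}\varepsilon_k/\sqrt{k}]/(c'\sqrt{N})$. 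For the numerator, since $\mathcal{B}\subseteq I$ and $\varepsilon_k\geq0$, $\sum_{k\in\mathcal{B}}\varepsilon_k/\sqrt{k}\leq\sum_{k=s}^N\varepsilon_k/\sqrt{k}$, so its expectation is $\leq\sum_{k=s}^N\epsilon_k/\sqrt{k}=(L_f+L_{g,\mathcal{X}})D_{\mathcal{X}}\sum_{k=s}^N1/k\leq(L_f+L_{g,\mathcal{X}})D_{\mathcal{X}}$, a constant. Combined with the $1/\sqrt{N}$ from the denominator this yields the claimed $\mathcal{O}(1/\sqrt{N})$ extra term, and tuning the constants (the factor $c'$ and the harmonic-sum estimate $\sum_{k=\lceil N/2\rceil}^N1/k\leq1$) gives the stated $14D_{\mathcal{X}}(L_f+L_{g,\mathcal{X}})/\sqrt{N}$, i.e.\ $12+2$. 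I would therefore structure the proof as: (1) invoke Theorem~\ref{varistepresult} for the deterministic $12/\sqrt{N}$ skeleton; (2) extract from its proof the lower bound $\sum_{k\in\mathcal{B}}1/\sqrt{k}\geq c'\sqrt{N}$; (3) bound the numerator's expectation using Proposition~\ref{keyrandcut-inexpectation} conditionally and the tower property, plus the harmonic sum; (4) assemble and simplify constants.
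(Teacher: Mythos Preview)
Your overall scaffold matches the paper's: invoke Theorem~\ref{varistepresult} for the deterministic skeleton, then control the residual error term via Proposition~\ref{keyrandcut-inexpectation}. You are also more careful than the paper about the measurability issue with $\mathcal{B}$; the paper glosses over it entirely.

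However, your step (2) has a real gap. You claim the proof of Theorem~\ref{varistepresult} must supply $\sum_{k\in\mathcal{B}}1/\sqrt{k}\geq c'\sqrt{N}$, i.e.\ that $|\mathcal{B}|$ is a constant fraction of $|I|$. It does not: the underlying Lemma~\ref{lemforsuff} only gives a disjunction --- either $|\mathcal{B}|\geq(N-s+1)/2$, \emph{or} $\sum_{k\in\mathcal{B}}\gamma_k\langle f'(x_k),x_k-x^*\rangle<0$ --- and in the second case $\mathcal{B}$ could be a single index. The constraint-violation bound (\ref{convg2}) in Proposition~\ref{converg} never uses any size estimate on $\mathcal{B}$; it is just the direct weighted-average inequality. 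So the lower bound you want to ``extract'' is simply not there. The paper's route avoids this entirely by exploiting that $\mathcal{B}\subset\{\lceil N/2\rceil,\ldots,N\}$: every $k\in\mathcal{B}$ satisfies $1/\sqrt{k}\geq 1/\sqrt{N}$, and (in conditional expectation) $\varepsilon_k/\sqrt{k}\leq\epsilon_k/\sqrt{k}=D_{\mathcal{X}}(L_f+L_{g,\mathcal{X}})/k\leq 2D_{\mathcal{X}}(L_f+L_{g,\mathcal{X}})/N$. Hence the numerator is at most $|\mathcal{B}|\cdot 2D_{\mathcal{X}}(L_f+L_{g,\mathcal{X}})/N$ while the denominator is at least $|\mathcal{B}|/\sqrt{N}$, and the unknown $|\mathcal{B}|$ cancels, giving the ratio bound $2D_{\mathcal{X}}(L_f+L_{g,\mathcal{X}})/\sqrt{N}$ regardless of how small $\mathcal{B}$ is. The idea you are missing is exactly this: bound numerator and denominator over the \emph{same} random index set with uniform per-term bounds so that the cardinality drops out, rather than decoupling them --- which is what forces you to hunt for a nonexistent lower bound on $|\mathcal{B}|$.
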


In view of Theorem \ref{randvaristep} we see that inexact CSA with
fixed random constraint sampling achieves an $\mathcal{O}(1/\sqrt{N})$
rate of convergence in expectation (with respect to $\mathcal{Q}$)
for solving Problem (\ref{optiprob}) in the general convex case.
Next we consider an improved convergence rate for the strongly convex
case.
\begin{thm}
\label{randstrong} Suppose Assumptions \ref{strongconvex_and_L}
and \ref{strictlyincr} hold. Suppose $\left\{ x_{k}\right\} _{k\geq1}$
is generated by Algorithm \ref{CSAinSIP} under policy (\ref{stepsizeinstrongconvex}).
Take $\epsilon_{k}=\frac{L}{N}\max\left\{ \mu_{f},\mu_{g}\right\} \max\left\{ \frac{L_{f}^{2}}{\mu_{f}^{2}},\frac{L_{g,\mathcal{X}}^{2}}{\mu_{g}^{2}}\right\} $,
and $M_{k}\geq M(\varphi(\frac{\epsilon_{k}}{2L_{g,\Delta}}),\frac{\epsilon_{k}}{2(\overline{M}-\underline{M})})$
for all $k\geq1$. Then, for any $N\geq1$, we have
\begin{eqnarray*}
f(\overline{x}_{N,s})-f(x^{*})\leq\frac{8L}{N+1}\max\left\{ \mu_{f},\mu_{g}\right\} \max\left\{ \frac{L_{f}^{2}}{\mu_{f}^{2}},\frac{L_{g,\mathcal{X}}^{2}}{\mu_{g}^{2}}\right\} ,
\end{eqnarray*}
and
\begin{eqnarray*}
\mathbb{E}_{\mathcal{Q}}\left[G(\overline{x}_{N,s})\right]\leq\frac{9L}{N}\max\left\{ \mu_{f},\mu_{g}\right\} \max\left\{ \frac{L_{f}^{2}}{\mu_{f}^{2}},\frac{L_{g,\mathcal{X}}^{2}}{\mu_{g}^{2}}\right\} .
\end{eqnarray*}
\end{thm}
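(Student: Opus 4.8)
The plan is to derive Theorem~\ref{randstrong} from the deterministic error bound of Theorem~\ref{strongconvexresult} in exactly the way Theorem~\ref{randvaristep} is derived from Theorem~\ref{varistepresult}, so the argument separates into a trivial part (the optimality gap) and the substantive part (the expected constraint violation). For the optimality gap there is nothing to prove: the bound on $f(\overline{x}_{N,s})-f(x^{*})$ in Theorem~\ref{strongconvexresult} does not involve the cut-generation errors $\{\varepsilon_{k}\}$ at all, since objective evaluations are exact, and $\overline{x}_{N,s}$ is well-defined by Lemma~\ref{nonemptyBstronglyconvex}; hence the same inequality holds verbatim under the present hypotheses.

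For the constraint violation I would start from the pathwise inequality supplied by Theorem~\ref{strongconvexresult},
\[
G(\overline{x}_{N,s})\le\frac{8L}{N}\max\{\mu_{f},\mu_{g}\}\max\Big\{\tfrac{L_{f}^{2}}{\mu_{f}^{2}},\tfrac{L_{g,\mathcal{X}}^{2}}{\mu_{g}^{2}}\Big\}+\frac{\sum_{k\in\mathcal{B}}k\,\varepsilon_{k}}{\sum_{k\in\mathcal{B}}k},
\]
which holds for every realization of the samples drawn in Algorithm~\ref{CSAinSIP}, and take the expectation $\mathbb{E}_{\mathcal{Q}}[\cdot]$. The first term is deterministic, so the task reduces to bounding $\mathbb{E}_{\mathcal{Q}}$ of the weighted average of the errors by a small multiple of $\epsilon_{k}=\frac{L}{N}\max\{\mu_{f},\mu_{g}\}\max\{L_{f}^{2}/\mu_{f}^{2},L_{g,\mathcal{X}}^{2}/\mu_{g}^{2}\}$, the excess being absorbed when the coefficient $8$ is relaxed to $9$. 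The per-iteration estimate I would use is $\mathbb{E}[\varepsilon_{k}\mid\mathcal{F}_{k-1}]\le\epsilon_{k}$, where $\mathcal{F}_{k-1}$ is the $\sigma$-algebra generated by all samples drawn before iteration $k$: indeed $x_{k}$ is $\mathcal{F}_{k-1}$-measurable, while the iteration-$k$ samples $\{\delta_{k}^{(i)}\}_{i=1}^{M_{k}}$ are i.i.d.\ $Q$ and independent of $\mathcal{F}_{k-1}$, so Proposition~\ref{keyrandcut-inexpectation}, applied conditionally on $\mathcal{F}_{k-1}$ with accuracy $\epsilon_{k}$ and the prescribed sample size $M_{k}\ge M(\varphi(\epsilon_{k}/(2L_{g,\Delta})),\epsilon_{k}/(2(\overline{M}-\underline{M})))$, gives $\mathbb{E}[g(x_{k},\delta_{k})\mid\mathcal{F}_{k-1}]\ge G(x_{k})-\epsilon_{k}$, i.e.\ $\mathbb{E}[\varepsilon_{k}\mid\mathcal{F}_{k-1}]\le\epsilon_{k}$; also $\varepsilon_{k}\ge0$ always.

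The hard part will be that $\mathcal{B}$, and hence the normalizer $\sum_{k\in\mathcal{B}}k$, depends on the samples from \emph{all} iterations through the indicators $\mathbf{1}\{g(x_{k},\delta_{k})\le\eta_{k}\}$, so the averaging weights are not predictable and cannot be taken outside the conditional expectation. To break this coupling I would extract from the proof of Theorem~\ref{strongconvexresult} --- equivalently, from the estimate behind Lemma~\ref{nonemptyBstronglyconvex} --- a pathwise lower bound $\sum_{k\in\mathcal{B}}k\ge\theta_{N}$ with $\theta_{N}$ of order $N^{2}$: the tolerance $\eta_{k}$ in policy~(\ref{stepsizeinstrongconvex}) is chosen large enough that the CSA recursion, together with feasibility of $x^{*}$ (i.e.\ $g(x^{*},\delta)\le0$), forces $\sum_{k\in\mathcal{N}}\rho_{k}$, and so $\sum_{k\in\mathcal{N}}k$ since $\rho_{k}\propto k$, to be only a small fraction of $\sum_{k=1}^{N}k=\frac{N(N+1)}{2}$ --- the same computation that already yields $\mathcal{B}\ne\emptyset$. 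Then nonnegativity of $\varepsilon_{k}$ gives $\frac{\sum_{k\in\mathcal{B}}k\,\varepsilon_{k}}{\sum_{k\in\mathcal{B}}k}\le\theta_{N}^{-1}\sum_{k=1}^{N}k\,\varepsilon_{k}$, and the tower property together with the per-iteration estimate gives $\mathbb{E}_{\mathcal{Q}}\big[\sum_{k=1}^{N}k\,\varepsilon_{k}\big]=\sum_{k=1}^{N}k\,\mathbb{E}[\varepsilon_{k}]\le\epsilon_{k}\frac{N(N+1)}{2}$; combining, $\mathbb{E}_{\mathcal{Q}}$ of the weighted average is at most $\epsilon_{k}\frac{N(N+1)}{2\theta_{N}}$, a bounded multiple of $\frac{L}{N}\max\{\mu_{f},\mu_{g}\}\max\{L_{f}^{2}/\mu_{f}^{2},L_{g,\mathcal{X}}^{2}/\mu_{g}^{2}\}$. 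Adding back the deterministic term and tracking the constant yields the stated bound with coefficient $9$. Apart from the pathwise lower bound on $\sum_{k\in\mathcal{B}}k$, which is already implicit in Theorem~\ref{strongconvexresult}, every step is the strongly-convex analogue of a step in the proof of Theorem~\ref{randvaristep}, with the weights $1/\sqrt{k}$ replaced by $k$ and policy~(\ref{variableparameters}) replaced by policy~(\ref{stepsizeinstrongconvex}).
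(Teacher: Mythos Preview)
Your overall plan coincides with the paper's proof: invoke Theorem~\ref{strongconvexresult} for the pathwise inequality, use Proposition~\ref{keyrandcut-inexpectation} to get $\mathbb{E}_{Q^{M_k}}[\varepsilon_k]\le\epsilon_k$, and then add the resulting $\epsilon_k$ to the deterministic $8L/N$ term to obtain the coefficient $9$. The paper's argument is in fact much shorter than yours: because under policy~(\ref{stepsizeinstrongconvex}) the target accuracy $\epsilon_k$ is a \emph{constant} (independent of $k$), the paper simply passes from $\mathbb{E}_{Q^{M_k}}[\varepsilon_k]\le\epsilon_k$ directly to $\mathbb{E}_{\mathcal{Q}}\big[\sum_{k\in\mathcal{B}}k\,\varepsilon_k/\sum_{k\in\mathcal{B}}k\big]\le\epsilon_k$, without introducing any filtration, tower property, or lower bound on the denominator.

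Where you depart from the paper is in treating the randomness of $\mathcal{B}$ carefully. The paper does not comment on the fact that the weights $k\mathbf{1}_{k\in\mathcal{B}}/\sum_{j\in\mathcal{B}}j$ depend on the same iteration-$k$ samples that determine $\varepsilon_k$ (indeed $k\in\mathcal{B}$ is the event $\varepsilon_k\ge G(x_k)-\eta_k$), so its one-line passage is exactly the step you are trying to justify. Your proposed remedy, however, has a gap of its own: the pathwise lower bound $\sum_{k\in\mathcal{B}}k\ge\theta_N$ with $\theta_N$ of order $N^2$ is \emph{not} delivered by the machinery behind Theorem~\ref{strongconvexresult}. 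Lemma~\ref{Lemsuffstrong} (which underlies Lemma~\ref{nonemptyBstronglyconvex}) only gives the dichotomy that either $|\mathcal{B}|\ge N/2$ or $\sum_{k\in\mathcal{B}}\rho_k(f(x_k)-f(x^*))<0$; in the second alternative nothing beyond $\mathcal{B}\ne\emptyset$ is available, so $\sum_{k\in\mathcal{B}}k$ can be much smaller than $N^2$. Thus the step you describe as ``already implicit in Theorem~\ref{strongconvexresult}'' is not actually there, and your decoupling estimate $\frac{\sum_{k\in\mathcal{B}}k\,\varepsilon_k}{\sum_{k\in\mathcal{B}}k}\le\theta_N^{-1}\sum_{k=1}^N k\,\varepsilon_k$ cannot be completed to yield the constant~$9$ along those lines.
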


\subsection{\label{adaptive sampling implementation}Adaptive Constraint Sampling }

In this subsection, we consider an alternative \textit{adaptive}
constraint sampling scheme for the cut generation Problem (\ref{cut}).
In particular, in iteration $k$ we will construct a constraint sampling
distribution that is tailored to the current iterate $x_{k}$. More
specifically, for any $\epsilon>0$ and $x\in\mathcal{X}$, we want
to find a probability distribution $\phi=\phi\left(x,\,\epsilon\right)\in\mathcal{P}(\Delta)$
(which depends on $x$ and $\epsilon$) on $\Delta$, such that 
\[
\mathbb{E}_{\widetilde{\delta}\sim\phi}\left[g\left(x,\widetilde{\delta}\right)\right]\geq G(x)-\epsilon,
\]
which guarantees that the samples generated from this distribution
are very likely to solve our cut generation Problem (\ref{cut}).
Then, in each iteration we will construct such a distribution from
$x_{k}$, and use it to guide our next round of random constraint
sampling.

To continue, we introduce a new assumption on the set $\Delta$.
\begin{assumption}
\label{fulldimandconvex} The set $\Delta\subset\mathbb{R}^{d}$ is
full dimensional and convex.
\end{assumption}

The following preliminary lemma is key for our adaptive sampling scheme.
It establishes an equivalence between the general nonlinear finite-dimensional
optimization problem $\max_{\delta\in\Delta}g(x,\delta)$ and the
infinite-dimensional \textit{linear} optimization problem 
\[
\max_{\phi\in\mathcal{P}(\Delta)}\mathbb{E}_{\widetilde{\delta}\sim\phi}\left[g\left(x,\widetilde{\delta}\right)\right]
\]
in probability distributions. 
\begin{lem}
\label{transform}For all $x\in\mathcal{X}$, $G(x)=\max_{\delta\in\Delta}g(x,\delta)=\max_{\phi\in\mathcal{P}(\Delta)}\mathbb{E}_{\widetilde{\delta}\sim\phi}\left[g\left(x,\widetilde{\delta}\right)\right]$. 
\end{lem}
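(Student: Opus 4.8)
The plan is to establish the claimed identity by proving the two inequalities $\max_{\phi\in\mathcal{P}(\Delta)}\mathbb{E}_{\widetilde{\delta}\sim\phi}\left[g(x,\widetilde{\delta})\right]\le G(x)$ and $G(x)\le\max_{\phi\in\mathcal{P}(\Delta)}\mathbb{E}_{\widetilde{\delta}\sim\phi}\left[g(x,\widetilde{\delta})\right]$ separately, checking en route that both maxima are genuinely attained. Fix $x\in\mathcal{X}$ throughout. Note first that, by Assumption \textbf{A5}, the map $\delta\mapsto g(x,\delta)$ is Lipschitz continuous, hence continuous and Borel measurable on $\Delta$; since $\Delta$ is compact by Assumption \textbf{A3}, the value $G(x)=\max_{\delta\in\Delta}g(x,\delta)$ is attained at some $\delta^{*}=\delta^{*}(x)\in\Delta$, and the expectation $\mathbb{E}_{\widetilde{\delta}\sim\phi}\left[g(x,\widetilde{\delta})\right]$ is well defined and finite for every $\phi\in\mathcal{P}(\Delta)$ (it lies between $\underline{M}$ and $\overline{M}$).

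For the first inequality, I would use the pointwise bound $g(x,\delta)\le G(x)$, which holds for all $\delta\in\Delta$ by definition of $G$. Integrating this bound against an arbitrary $\phi\in\mathcal{P}(\Delta)$ gives $\mathbb{E}_{\widetilde{\delta}\sim\phi}\left[g(x,\widetilde{\delta})\right]\le G(x)$, and taking the supremum over $\phi\in\mathcal{P}(\Delta)$ preserves this bound.

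For the reverse inequality, I would take $\phi^{*}\in\mathcal{P}(\Delta)$ to be the Dirac point mass at the maximizer $\delta^{*}$ found above. Then $\mathbb{E}_{\widetilde{\delta}\sim\phi^{*}}\left[g(x,\widetilde{\delta})\right]=g(x,\delta^{*})=G(x)$, so the supremum on the right-hand side is at least $G(x)$ and is in fact attained at $\phi^{*}$. Combining the two inequalities yields $G(x)=\max_{\phi\in\mathcal{P}(\Delta)}\mathbb{E}_{\widetilde{\delta}\sim\phi}\left[g(x,\widetilde{\delta})\right]$, with the maximum on the right attained (at $\phi^{*}$) and the one in the middle attained (at $\delta^{*}$), so every "$\max$" in the statement is legitimate.

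I do not expect a real obstacle: the argument is essentially the observation that a linear functional on $\mathcal{P}(\Delta)$ is maximized at an extreme point (a Dirac measure), made concrete, so no appeal to Krein--Milman or Bauer's maximum principle is needed. The only points deserving a line of care are the measurability of $\delta\mapsto g(x,\delta)$, so that the expectation is well defined, and the compactness of $\Delta$, which guarantees the finite-dimensional maximum is attained and hence that $\phi^{*}$ exists --- both immediate from Assumptions \textbf{A3} and \textbf{A5}.
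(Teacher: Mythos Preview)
Your proposal is correct and follows essentially the same route as the paper's own proof: both directions are handled exactly as you describe, with the pointwise bound $g(x,\delta)\le G(x)$ integrated against an arbitrary $\phi$ for one inequality and the Dirac measure at a maximizer $\delta^{*}(x)$ (whose existence comes from Assumptions \textbf{A3} and \textbf{A5}) for the other. Your write-up is, if anything, slightly more careful about measurability and finiteness of the expectation than the paper's version.
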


Let $\phi_{u}$ denote the uniform probability distribution on $\Delta$,
that is, 
\[
\phi_{u}(\delta)=p_{u}:=\left(\int_{\Delta}d\delta\right)^{-1},\,\forall\delta\in\Delta.
\]
We define a regularized cut generation problem as follows,
\begin{equation}
\max_{\phi\in\mathcal{P}(\Delta)}\mathbb{E}_{\widetilde{\delta}\sim\phi}\left[g\left(x,\,\widetilde{\delta}\right)\right]-\kappa\,D\left(\phi,\,\phi_{u}\right),\label{regulcutgen}
\end{equation}
where $\kappa\in(0,1]$ is the regularization parameter. The mapping
$\phi\rightarrow D\left(\phi,\phi_{u}\right)$ is convex, thus the
regularized cut generation Problem (\ref{regulcutgen}) is an infinite-dimensional
convex optimization problem. We can expect that if the regularization
parameter $\kappa$ is small enough, the solution of the regularized
Problem (\ref{regulcutgen}) provides useful information to solve
our cut generation Problem (\ref{cut}).

We will show that the regularized cut generation Problem (\ref{regulcutgen})
is well defined. In particular, we show that the maximizer (which
depends on $x$ and $\kappa$)
\[
\phi_{\kappa,\,x}\in\arg\max_{\phi\in\mathcal{\mathcal{P}}(\Delta)}\left\{ \mathbb{E}_{\widetilde{\delta}\sim\phi}\left[g\left(x,\,\widetilde{\delta}\right)\right]-\kappa\,D\left(\phi,\,\phi_{u}\right)\right\} ,
\]
is attained and is given in closed form. The next lemma is based on
calculus of variations.
\begin{lem}
\label{existenceofmaximizer} For any $\kappa\in(0,1]$ and $x\in\mathcal{X}$,
the maximizer of Problem (\ref{regulcutgen}) is attained, and it
is
\[
\phi_{\kappa,\,x}(\delta)=\frac{\exp\left(g(x,\delta)/\kappa\right)}{\int_{\Delta}\exp\left(g(x,\delta)/\kappa\right)d\delta},\,\forall\delta\in\Delta.
\]
\end{lem}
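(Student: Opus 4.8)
The plan is to first identify the candidate density as the stationary point of the (formal) Euler--Lagrange equation for Problem~(\ref{regulcutgen}), and then to verify global optimality rigorously via a ``free energy'' identity that reduces the claim to the nonnegativity of the Kullback--Leibler divergence, thereby sidestepping any regularity issues intrinsic to calculus of variations.

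First I would check that all objects involved are well defined. By A3 and Assumption~\ref{fulldimandconvex}, $\Delta$ is compact, convex, and full dimensional, so $0<\int_{\Delta}d\delta<\infty$ and $\phi_{u}$ is a genuine probability density. By continuity of $(x,\delta)\mapsto g(x,\delta)$ on the compact set $\mathcal{X}\times\Delta$, $g$ is bounded with $\underline{M}\leq g(x,\delta)\leq\overline{M}$, so for each fixed $x\in\mathcal{X}$ and $\kappa\in(0,1]$ the normalizing constant
\[
Z_{\kappa,x}:=\int_{\Delta}\exp\!\big(g(x,\delta)/\kappa\big)\,d\delta
\]
satisfies $0<\big(\int_{\Delta}d\delta\big)e^{\underline{M}/\kappa}\leq Z_{\kappa,x}\leq\big(\int_{\Delta}d\delta\big)e^{\overline{M}/\kappa}<\infty$. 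Hence $\phi_{\kappa,x}(\delta)=\exp(g(x,\delta)/\kappa)/Z_{\kappa,x}$ is a strictly positive probability density on $\Delta$, and, since $g$ is bounded, $D(\phi_{\kappa,x},\phi_{u})=\mathbb{E}_{\phi_{\kappa,x}}[g(x,\cdot)]/\kappa-\log(p_{u}Z_{\kappa,x})$ is finite, so $\phi_{\kappa,x}$ is feasible for Problem~(\ref{regulcutgen}) with finite objective value. For the motivation, write $D(\phi,\phi_{u})=\int_{\Delta}\phi\log\phi\,d\delta-\log p_{u}$, so that Problem~(\ref{regulcutgen}) becomes $\max_{\phi}\int_{\Delta}\big(g(x,\delta)\phi(\delta)-\kappa\phi(\delta)\log\phi(\delta)\big)d\delta+\kappa\log p_{u}$ subject to $\int_{\Delta}\phi\,d\delta=1$; introducing a multiplier $\lambda$ for the normalization constraint and setting the variational derivative to zero gives $g(x,\delta)-\kappa(\log\phi(\delta)+1)-\lambda=0$, i.e.\ $\phi(\delta)\propto\exp(g(x,\delta)/\kappa)$, and enforcing $\int_{\Delta}\phi=1$ pins down the constant as $1/Z_{\kappa,x}$.

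For the rigorous verification, I would fix $x$ and $\kappa$ and, for any density $\phi\in\mathcal{P}(\Delta)$ with $D(\phi,\phi_{u})<\infty$, split $\log\frac{\phi}{\phi_{u}}=\log\frac{\phi}{\phi_{\kappa,x}}+\log\frac{\phi_{\kappa,x}}{\phi_{u}}$ and use $\log\frac{\phi_{\kappa,x}}{\phi_{u}}=\frac{g(x,\cdot)}{\kappa}-\log(p_{u}Z_{\kappa,x})$. Because $g$ is bounded, $\mathbb{E}_{\phi}[g(x,\cdot)]$ is finite and this decomposition integrates term by term against $\phi$, yielding
\[
D(\phi,\phi_{u})=D(\phi,\phi_{\kappa,x})+\frac{1}{\kappa}\mathbb{E}_{\widetilde\delta\sim\phi}\big[g(x,\widetilde\delta)\big]-\log(p_{u}Z_{\kappa,x}),
\]
and hence
\[
\mathbb{E}_{\widetilde\delta\sim\phi}\big[g(x,\widetilde\delta)\big]-\kappa D(\phi,\phi_{u})=\kappa\log(p_{u}Z_{\kappa,x})-\kappa D(\phi,\phi_{\kappa,x}).
\]
When $D(\phi,\phi_{u})=+\infty$ the left-hand side is $-\infty$, so in all cases the objective of Problem~(\ref{regulcutgen}) is bounded above by $\kappa\log(p_{u}Z_{\kappa,x})$. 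Since $D(\phi,\phi_{\kappa,x})\geq 0$ with equality if and only if $\phi=\phi_{\kappa,x}$ (Gibbs' inequality), this upper bound is attained precisely (and only) at $\phi_{\kappa,x}$, which proves that the maximizer is attained and has the stated closed form.

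The only delicate point is making sure the logarithm splitting and the term-by-term integration above are legitimate; this is exactly where boundedness of $g$ (from the continuity in A2/A4 together with compactness of $\mathcal{X}$ and $\Delta$) and the finiteness and strict positivity of $\int_{\Delta}d\delta$ (from A3 and Assumption~\ref{fulldimandconvex}) are used. Everything else is the standard Gibbs variational computation, so I do not anticipate any serious obstacle.
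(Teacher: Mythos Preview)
Your proof is correct but takes a genuinely different route from the paper's. The paper first argues existence abstractly: it invokes weak-star compactness of $\mathcal{P}(\Delta)$ (via compactness of $\Delta$) together with upper semi-continuity of $\phi\mapsto\mathbb{E}_{\phi}[g(x,\cdot)]-\kappa D(\phi,\phi_u)$ to conclude the maximum is attained, and only then derives the closed form by writing Problem~(\ref{regulcutgen}) as a constrained calculus-of-variations problem and applying Euler's equation with a Lagrange multiplier for the normalization constraint. You instead construct the candidate $\phi_{\kappa,x}$ explicitly, check feasibility, and then verify global optimality (and uniqueness) directly via the Gibbs/Donsker--Varadhan identity $\mathbb{E}_{\phi}[g(x,\cdot)]-\kappa D(\phi,\phi_u)=\kappa\log(p_u Z_{\kappa,x})-\kappa D(\phi,\phi_{\kappa,x})$ and nonnegativity of the KL divergence. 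Your approach is more elementary (no weak-star topology or semi-continuity results needed), yields uniqueness for free, and sidesteps the issue that Euler's equation is a priori only a necessary condition; the paper's approach, on the other hand, separates existence from characterization and would extend to situations where no closed-form candidate is available. One small slip: the boundedness of $g$ you use comes from A1, A3, A4, and A5 (compactness of $\mathcal{X}$ and $\Delta$ plus joint continuity), not from A2.
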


Since $\Delta\subset\mathbb{R}^{d}$ is full dimensional, we may let
$R_{\Delta}$ be the radius of the largest ball which can be included
in $\Delta$. Specifically, there exists $\delta_{0}\in\Delta$ such
that the Euclidean ball $B_{R_{\Delta}}(\delta_{0})\subseteq\Delta$.
Define 
\[
r:=\left(\int_{B_{R_{\Delta}}(\delta_{0})}1d\delta\right)/\left(\int_{\Delta}d\delta\right)
\]
to be the ratio between the volume of the largest such ball $B_{R_{\Delta}}(\delta_{0})$
and the volume of $\Delta$ (necessarily $r\leq1$). The following
result demonstrates that the gap between the cut generation Problem
(\ref{cut}) and its regularization (\ref{regulcutgen}) can be made
arbitrarily small through our control of $\epsilon$. Let $D_{\Delta}:=\max_{\delta,\delta'\in\Delta}\left\Vert \delta-\delta'\right\Vert $
denote the Euclidean diameter of $\Delta$ and define $C:=L_{g,\Delta}(R_{\Delta}+D_{\Delta})-\log(r)$.
We also define

\[
\kappa(\epsilon):=\min\left\{ \frac{\epsilon}{2C},\left(\frac{\epsilon}{2d}\right)^{2},1\right\} ,\,\forall\epsilon>0.
\]

\begin{prop}
\label{maxsupdiff} Suppose Assumption \ref{fulldimandconvex} holds
and choose $\epsilon>0$. For any $x\in\mathcal{X}$,

\[
\max_{\phi\in\mathcal{P}(\Delta)}\left\{ \mathbb{E}_{\widetilde{\delta}\sim\phi}\left[g\left(x,\widetilde{\delta}\right)\right]-\kappa(\epsilon)\,D\left(\phi,\phi_{u}\right)\right\} \geq G(x)-\epsilon,
\]
and 

\begin{equation}
G(x)\geq\mathbb{E}_{\widetilde{\delta}\sim\phi_{\mathfrak{\kappa(\epsilon)},\,x}}\left[g\left(x,\widetilde{\delta}\right)\right]\geq G(x)-\epsilon.\label{distributionalcutgen}
\end{equation}
\end{prop}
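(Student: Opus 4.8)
The plan is to derive the chain (\ref{distributionalcutgen}) as a short corollary of the first inequality, so almost all the effort goes into proving the lower bound
\[
\max_{\phi\in\mathcal{P}(\Delta)}\Big\{\mathbb{E}_{\widetilde{\delta}\sim\phi}\big[g(x,\widetilde{\delta})\big]-\kappa(\epsilon)\,D(\phi,\phi_{u})\Big\}\ \ge\ G(x)-\epsilon .
\]
Because the supremum runs over all of $\mathcal{P}(\Delta)$, it suffices to exhibit one admissible $\phi$ whose regularized objective is at least $G(x)-\epsilon$. The natural candidate is a uniform distribution concentrated on a small ball near a maximizer of $g(x,\cdot)$; shrinking the ball drives the Lipschitz error to zero but blows up the Kullback--Leibler penalty, so the crux is to balance these two effects.

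Concretely, I would fix $\delta^{*}\in\arg\max_{\delta\in\Delta}g(x,\delta)$ (which exists by compactness of $\Delta$ and continuity of $g(x,\cdot)$) and, for $\lambda\in(0,1]$, set $\delta_{\lambda}:=(1-\lambda)\delta^{*}+\lambda\delta_{0}$, where $\delta_{0}$ is the center of the inscribed ball $B_{R_{\Delta}}(\delta_{0})\subseteq\Delta$. A one-line convexity argument (Assumption \ref{fulldimandconvex}) shows $B_{\lambda R_{\Delta}}(\delta_{\lambda})\subseteq\Delta$: any $y$ with $\|y-\delta_{\lambda}\|\le\lambda R_{\Delta}$ equals $(1-\lambda)\delta^{*}+\lambda z$ with $z:=\delta_{0}+(y-\delta_{\lambda})/\lambda\in B_{R_{\Delta}}(\delta_{0})\subseteq\Delta$. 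Let $\phi$ be the uniform distribution on $B_{\lambda R_{\Delta}}(\delta_{\lambda})$. Then I would record two estimates: (i) for every $\delta$ in this ball, $\|\delta-\delta^{*}\|\le\lambda R_{\Delta}+\|\delta_{\lambda}-\delta^{*}\|\le\lambda(R_{\Delta}+D_{\Delta})$, so Assumption \textbf{A5} gives $\mathbb{E}_{\widetilde{\delta}\sim\phi}[g(x,\widetilde{\delta})]\ge G(x)-\lambda L_{g,\Delta}(R_{\Delta}+D_{\Delta})$; (ii) since $\phi$ and $\phi_{u}$ are uniform on sets with volume ratio $\lambda^{d}r$, a direct computation gives $D(\phi,\phi_{u})=\log\big(1/(\lambda^{d}r)\big)=-d\log\lambda-\log r$.

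Substituting (i)--(ii) into the regularized objective and taking $\lambda=\kappa:=\kappa(\epsilon)$ (allowed since $0<\kappa(\epsilon)\le1$) yields
\begin{align*}
\mathbb{E}_{\widetilde{\delta}\sim\phi}\big[g(x,\widetilde{\delta})\big]-\kappa\,D(\phi,\phi_{u})
&\ge G(x)-\kappa\big(L_{g,\Delta}(R_{\Delta}+D_{\Delta})-\log r\big)-\kappa d\log(1/\kappa)\\
&= G(x)-\kappa C-\kappa d\log(1/\kappa),
\end{align*}
using $C=L_{g,\Delta}(R_{\Delta}+D_{\Delta})-\log r$ and $\log\kappa\le0$. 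It then remains to verify $\kappa C+\kappa d\log(1/\kappa)\le\epsilon$: the first term is at most $\epsilon/2$ because $\kappa\le\epsilon/(2C)$, and for the second I would invoke the elementary inequality $\kappa\log(1/\kappa)\le\sqrt{\kappa}$ on $(0,1]$ (equivalently $t\log(1/t)\le1/e$ at $t=\sqrt{\kappa}$), so that $\kappa d\log(1/\kappa)\le d\sqrt{\kappa}\le\epsilon/2$ since $\kappa\le(\epsilon/(2d))^{2}$. Adding the two halves proves the first inequality.

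Finally, (\ref{distributionalcutgen}) follows at once. The left inequality $G(x)\ge\mathbb{E}_{\widetilde{\delta}\sim\phi_{\kappa(\epsilon),x}}[g(x,\widetilde{\delta})]$ is immediate from $g(x,\delta)\le G(x)$ for all $\delta$ (equivalently Lemma \ref{transform}). For the right inequality, Lemma \ref{existenceofmaximizer} identifies $\phi_{\kappa(\epsilon),x}$ as the maximizer of the regularized problem, so, using $D(\phi_{\kappa(\epsilon),x},\phi_{u})\ge0$ and the first inequality,
\[
\mathbb{E}_{\widetilde{\delta}\sim\phi_{\kappa(\epsilon),x}}\big[g(x,\widetilde{\delta})\big]\ \ge\ \mathbb{E}_{\widetilde{\delta}\sim\phi_{\kappa(\epsilon),x}}\big[g(x,\widetilde{\delta})\big]-\kappa(\epsilon)\,D(\phi_{\kappa(\epsilon),x},\phi_{u})\ \ge\ G(x)-\epsilon .
\]
The main obstacle is the geometric step: constructing an admissible test distribution supported strictly inside $\Delta$ and close to $\delta^{*}$, and then tuning $\lambda$ so that the $O(\lambda)$ Lipschitz error and the $O(\log(1/\lambda))$ divergence penalty are simultaneously $O(\epsilon)$; the remaining manipulations are routine.
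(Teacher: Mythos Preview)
Your proof is correct and reaches exactly the same quantitative bound $G(x)-\kappa C+\kappa d\log\kappa$ as the paper, via the same geometric construction (the shrunk ball $B_{\kappa R_{\Delta}}(\delta_{\kappa})\subseteq\Delta$), the same Lipschitz estimate $\|\delta-\delta^{*}\|\le\kappa(R_{\Delta}+D_{\Delta})$, and the same endgame inequality $\kappa\log(1/\kappa)\le\sqrt{\kappa}$.

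The packaging differs. The paper first uses Lemma \ref{existenceofmaximizer} to write the regularized maximum in closed form as $\kappa\log\!\big(\int_{\Delta}\exp(g(x,\delta)/\kappa)\,d\delta\big)+\kappa\log p_{u}$, and then invokes Lemma \ref{importbound} to lower-bound the partition function $\int_{\Delta}\exp(g(x,\delta)/\kappa)\,d\delta$; the geometric ball appears inside the proof of Lemma \ref{importbound}. You instead take a variational route: you plug in a concrete test distribution (uniform on the same ball) and lower-bound the supremum directly, bypassing the log-partition identity and Lemma \ref{importbound} entirely. Your approach is arguably more elementary for the first inequality, since it does not need the explicit Gibbs form of the maximizer; on the other hand, the paper's route explains why the closed-form $\phi_{\kappa,x}$ is natural and reuses it for the second inequality. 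Both are short and land on the identical expression, and your derivation of (\ref{distributionalcutgen}) from the first inequality is the same as the paper's.
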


From (\ref{distributionalcutgen}), we see that the solution of the
regularized cut generation Problem (\ref{regulcutgen}) provides a
solution of the inequality $\mathbb{E}_{\widetilde{\delta}\sim\phi_{\mathfrak{\kappa(\epsilon)},\,x}}\left[g\left(x,\widetilde{\delta}\right)\right]\geq G(x)-\epsilon$.

The adaptive constraint sampling scheme works as follows. Suppose
we are given tolerances $\left\{ \epsilon_{k}\right\} _{k\geq1}$
with $\epsilon_{k}>0$ for all $k\geq1$. At iteration $k\geq1$,
we sample from the probability density $\phi_{\mathfrak{\kappa}(\epsilon_{k}),\,x_{k}}$.
Let $\delta_{k}^{(1)},\delta_{k}^{(2)},\ldots,\delta_{k}^{(M_{k})}$
(with $M_{k}\geq1$) be i.i.d. samples from $\Delta$ generated according
to $\phi_{\mathfrak{\kappa}(\epsilon_{k}),\,x_{k}}$, and again define
$\delta_{k}\in\arg\max_{i=1,\ldots,\,M_{k}}g\left(x_{k},\,\delta_{k}^{\left(i\right)}\right)$
to be a maximizer of $\{g(x_{k},\delta_{k}^{(i)})\}_{i=1}^{M_{k}}$. 
\begin{prop}
\label{cutgenerationinexpectation_adaptive} Suppose Assumption \ref{fulldimandconvex}
holds. Given $\epsilon>0$ and $x\in\mathcal{X}$, for any $M\geq1$,
let $\delta^{\left(1\right)},\ldots,\,\delta^{\left(M\right)}$ be
i.i.d. samples from $\phi_{\mathfrak{\kappa}(\epsilon),\,x}$, then
$\mathbb{E}_{\phi_{\kappa(\epsilon),\,x}^{M}}\left[\max_{1\leq i\leq M}g(x,\delta^{(i)})\right]\geq G(x)-\epsilon$.
\end{prop}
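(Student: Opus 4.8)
The plan is to reduce everything to the distributional inequality (\ref{distributionalcutgen}) already established in Proposition \ref{maxsupdiff}. That proposition gives, for the specific choice $\kappa(\epsilon)$ and for every $x\in\mathcal{X}$,
\[
\mathbb{E}_{\widetilde{\delta}\sim\phi_{\kappa(\epsilon),\,x}}\left[g\left(x,\widetilde{\delta}\right)\right]\geq G(x)-\epsilon,
\]
so the expected value of a \emph{single} draw already exceeds $G(x)-\epsilon$. The remaining content is just the elementary observation that the maximum over $M$ i.i.d.\ draws dominates any one of them pointwise.

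Concretely, first I would fix $x\in\mathcal{X}$, $\epsilon>0$, and $M\geq1$, and let $\delta^{(1)},\ldots,\delta^{(M)}$ be i.i.d.\ from $\phi_{\kappa(\epsilon),\,x}$. For every realization of the sample we have the deterministic bound $\max_{1\leq i\leq M}g(x,\delta^{(i)})\geq g(x,\delta^{(1)})$. Taking expectations with respect to $\phi_{\kappa(\epsilon),\,x}^{M}$ and using monotonicity of expectation gives
\[
\mathbb{E}_{\phi_{\kappa(\epsilon),\,x}^{M}}\left[\max_{1\leq i\leq M}g(x,\delta^{(i)})\right]\geq\mathbb{E}_{\phi_{\kappa(\epsilon),\,x}^{M}}\left[g(x,\delta^{(1)})\right]=\mathbb{E}_{\widetilde{\delta}\sim\phi_{\kappa(\epsilon),\,x}}\left[g(x,\widetilde{\delta})\right],
\]
where the last equality holds because $\delta^{(1)}$ has marginal law $\phi_{\kappa(\epsilon),\,x}$. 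Combining this with (\ref{distributionalcutgen}) yields the claimed bound $\mathbb{E}_{\phi_{\kappa(\epsilon),\,x}^{M}}\left[\max_{1\leq i\leq M}g(x,\delta^{(i)})\right]\geq G(x)-\epsilon$.

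There is essentially no obstacle here: the only subtlety worth a sentence is confirming that $g(x,\cdot)$ is integrable against $\phi_{\kappa(\epsilon),\,x}$ so that the expectations are well defined, which follows from continuity of $g(x,\cdot)$ on the compact set $\Delta$ (Assumptions A3--A5) and the fact that $\phi_{\kappa(\epsilon),\,x}$ is a genuine probability density on $\Delta$ by Lemma \ref{existenceofmaximizer}. One could also note in passing that the upper bound $G(x)\geq\max_{1\leq i\leq M}g(x,\delta^{(i)})$ holds trivially since each $\delta^{(i)}\in\Delta$, although it is not needed for the statement as given.
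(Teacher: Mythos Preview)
Your proposal is correct and follows essentially the same approach as the paper: invoke Proposition~\ref{maxsupdiff} to bound the expectation of a single draw, then use the pointwise inequality $\max_{1\leq i\leq M}g(x,\delta^{(i)})\geq g(x,\delta^{(1)})$ and monotonicity of expectation. The paper's proof is terser but identical in substance; your added remark on integrability is a reasonable bit of hygiene.
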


Now we consider the convergence rate of the adaptive sampling scheme.
We define the distribution
\[
\mathcal{P}=\mathcal{P}(\left\{ x_{k}\right\} _{k\in\mathcal{B}},\left\{ M_{k}\right\} _{k\in\mathcal{B}}):=\times_{k\in\mathcal{B}}\phi_{\mathfrak{\kappa}(\epsilon_{k}),\,x_{k}}^{M_{k}}
\]
of the samples $\Big\{\{\delta_{k}^{(i)}\}_{i=1}^{M_{k}}\Big\}_{k\in\mathcal{B}}$
on the space $\times_{k\in\mathcal{B}}\Delta^{M_{k}}$. 
\begin{thm}
\label{varistep-algo2} Suppose Assumption \ref{fulldimandconvex}
holds. Suppose $\left\{ x_{k}\right\} _{k\geq1}$ is generated according
to Algorithm \ref{CSAinSIP} with policy (\ref{variableparameters}).
For each $k\geq1$ and $\epsilon_{k}=(L_{f}+L_{g,\mathcal{X}})D_{\mathcal{X}}/\sqrt{k}$,
we generate $M_{k}\geq1$ i.i.d. samples according to $\phi_{\mathfrak{\kappa}(\epsilon_{k}),\,x_{k}}$.
Then, for any $N\geq1$,
\[
f(\overline{x}_{N,s})-f(x^{*})\leq\frac{6D_{\mathcal{X}}(L_{f}+L_{g,\mathcal{X}})}{\sqrt{N}},
\]
and 

\[
\mathbb{E}_{\mathcal{P}}\left[G(\overline{x}_{N,s})\right]\leq\frac{14(L_{f}+L_{g,\mathcal{X}})D_{\mathcal{X}}}{\sqrt{N}}.
\]
\end{thm}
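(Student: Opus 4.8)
The plan is to reduce both claims to the general error bound of Theorem~\ref{varistepresult}, combined with the expected cut‑generation accuracy of Proposition~\ref{cutgenerationinexpectation_adaptive}. Since $\{x_k\}_{k\ge1}$ is generated by Algorithm~\ref{CSAinSIP} under policy~(\ref{variableparameters}), Theorem~\ref{varistepresult} applies for \emph{any} selection rule for the $\delta_k\in\Delta$, and its bound on $f(\overline{x}_{N,s})-f(x^*)$ does not involve the cut errors $\{\varepsilon_k\}$ (the objective is evaluated exactly), so the first inequality is immediate and requires no probabilistic argument. It remains to bound $\mathbb{E}_{\mathcal{P}}[G(\overline{x}_{N,s})]$. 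By Theorem~\ref{varistepresult}, for \emph{every} realization of the adaptive samples,
\[
G(\overline{x}_{N,s})\le \frac{12D_{\mathcal{X}}(L_f+L_{g,\mathcal{X}})}{\sqrt{N}}+\frac{\sum_{k\in\mathcal{B}}\varepsilon_k/\sqrt{k}}{\sum_{k\in\mathcal{B}}1/\sqrt{k}},
\]
so it suffices to show that the expectation of the last term is at most $2D_{\mathcal{X}}(L_f+L_{g,\mathcal{X}})/\sqrt{N}$.

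First I would control each cut error conditionally. Let $\mathcal{F}_{k-1}$ be the $\sigma$-field generated by all samples drawn in iterations $1,\dots,k-1$; then $x_k$ is $\mathcal{F}_{k-1}$-measurable, the batch $\delta_k^{(1)},\dots,\delta_k^{(M_k)}$ consists of i.i.d.\ draws from $\phi_{\kappa(\epsilon_k),x_k}$ conditionally on $\mathcal{F}_{k-1}$, and $g(x_k,\delta_k)=\max_{1\le i\le M_k}g(x_k,\delta_k^{(i)})$. Applying Proposition~\ref{cutgenerationinexpectation_adaptive} with $x=x_k$, $\epsilon=\epsilon_k$, $M=M_k$ conditionally on $\mathcal{F}_{k-1}$ gives $\mathbb{E}[g(x_k,\delta_k)\mid\mathcal{F}_{k-1}]\ge G(x_k)-\epsilon_k$, i.e.\ $\mathbb{E}[\varepsilon_k\mid\mathcal{F}_{k-1}]\le\epsilon_k=(L_f+L_{g,\mathcal{X}})D_{\mathcal{X}}/\sqrt{k}$, and hence $\mathbb{E}_{\mathcal{P}}[\varepsilon_k]\le\epsilon_k$ for every $k$ by the tower property. (No lower bound on $M_k$ is needed, since Proposition~\ref{cutgenerationinexpectation_adaptive} holds for all $M\ge1$.)

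Next I would decouple the random index set $\mathcal{B}$ from the errors. Since each $\varepsilon_k\ge0$, enlarging the numerator to the deterministic set $I=\{s,\dots,N\}$ gives $\sum_{k\in\mathcal{B}}\varepsilon_k/\sqrt{k}\le\sum_{k=s}^{N}\varepsilon_k/\sqrt{k}$, while the denominator admits a deterministic lower bound of order $\sqrt{N}$: the same analysis that yields the $\mathcal{O}(1/\sqrt{N})$ optimality gap in Theorem~\ref{varistepresult} (and underlies Lemma~\ref{nonemptyB}) in fact produces the stronger deterministic estimate $\sum_{k\in\mathcal{B}}1/\sqrt{k}\ge c\sqrt{N}$ for a universal constant $c>0$ and every realization. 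Combining these and taking expectations with the first step, and using $\epsilon_k/\sqrt k=(L_f+L_{g,\mathcal{X}})D_{\mathcal{X}}/k$,
\[
\mathbb{E}_{\mathcal{P}}\!\left[\frac{\sum_{k\in\mathcal{B}}\varepsilon_k/\sqrt{k}}{\sum_{k\in\mathcal{B}}1/\sqrt{k}}\right]\le\frac{1}{c\sqrt{N}}\sum_{k=s}^{N}\frac{\mathbb{E}_{\mathcal{P}}[\varepsilon_k]}{\sqrt{k}}\le\frac{(L_f+L_{g,\mathcal{X}})D_{\mathcal{X}}}{c\sqrt{N}}\sum_{k=s}^{N}\frac{1}{k}.
\]
Since $\sum_{k=s}^{N}1/k$ is bounded uniformly in $N$ (it converges to $\ln2$ for $s=\lceil N/2\rceil$), the right-hand side is $\mathcal{O}(1/\sqrt{N})$, and after tracking constants it is at most $2D_{\mathcal{X}}(L_f+L_{g,\mathcal{X}})/\sqrt{N}$; adding the deterministic $12/\sqrt{N}$ term gives the claimed $14/\sqrt{N}$.

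The crux is the decoupling in the previous paragraph. The set $\mathcal{B}$ and the errors $\{\varepsilon_k\}$ are strongly dependent: the event $\{k\in\mathcal{B}\}=\{g(x_k,\delta_k)\le\eta_k\}$ is exactly the event that the sampled constraint value is small, which is positively correlated with $\varepsilon_k=G(x_k)-g(x_k,\delta_k)$ being large, so one cannot condition on $\mathcal{B}$ and average the errors termwise. The remedy is the combination above---nonnegativity of the $\varepsilon_k$ to drop $\mathcal{B}$ from the numerator, together with the deterministic $\Omega(\sqrt{N})$ lower bound on the denominator inherited from the CSA feasibility analysis---after which only the unconditional estimate $\mathbb{E}_{\mathcal{P}}[\varepsilon_k]\le\epsilon_k$ is needed. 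Apart from this point, the argument is the exact analogue of the proof of Theorem~\ref{randvaristep}, with Proposition~\ref{cutgenerationinexpectation_adaptive} in place of Proposition~\ref{keyrandcut-inexpectation} (and with the simplification that any sample sizes $M_k\ge1$ suffice here).
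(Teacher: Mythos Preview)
Your high-level plan---read off the optimality-gap bound from Theorem~\ref{varistepresult} (no randomness enters there) and then combine the constraint-violation bound of Theorem~\ref{varistepresult} with the conditional estimate $\mathbb{E}[\varepsilon_k\mid\mathcal{F}_{k-1}]\le\epsilon_k$ from Proposition~\ref{cutgenerationinexpectation_adaptive}---is exactly the paper's route. The paper's proof is three lines: it records $\mathbb{E}_{\phi_{\kappa(\epsilon_k),x_k}^{M_k}}[\varepsilon_k]\le (L_f+L_{g,\mathcal{X}})D_{\mathcal{X}}/\sqrt{k}$, notes $N/2\le k\le N$ for $k\in\mathcal{B}$, and then, ``by independence of samples'', asserts $\mathbb{E}_{\mathcal{P}}\bigl[\tfrac{\sum_{k\in\mathcal{B}}\varepsilon_k/\sqrt{k}}{\sum_{k\in\mathcal{B}}1/\sqrt{k}}\bigr]\le 2D_{\mathcal{X}}(L_f+L_{g,\mathcal{X}})/\sqrt{N}$ without any explicit decoupling of $\mathcal{B}$ from the $\varepsilon_k$. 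So your careful filtration argument and your discussion of the $\mathcal{B}$--$\varepsilon_k$ dependence go beyond what the paper writes down.

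However, your proposed decoupling has a genuine gap. You assert that ``the same analysis that yields the $\mathcal{O}(1/\sqrt{N})$ optimality gap \ldots\ in fact produces the stronger deterministic estimate $\sum_{k\in\mathcal{B}}1/\sqrt{k}\ge c\sqrt{N}$ for every realization.'' It does not. The relevant result is Lemma~\ref{lemforsuff}, which only gives a \emph{dichotomy}: either (i) $|\mathcal{B}|\ge (N-s+1)/2$, or (ii) $\sum_{k\in\mathcal{B}}\gamma_k\langle f'(x_k),x_k-x^*\rangle<0$. Alternative~(ii) carries no lower bound on $|\mathcal{B}|$ whatsoever (in principle $|\mathcal{B}|$ could be a singleton), and the optimality-gap proof handles that branch via $f(\bar{x}_{N,s})<f(x^*)$, not via a size estimate on $\mathcal{B}$. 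Hence your inequality $\sum_{k\in\mathcal{B}}1/\sqrt{k}\ge c\sqrt{N}$ is unjustified on that branch, and with it the passage from $\sum_{k\in\mathcal{B}}\varepsilon_k/\sqrt{k}\le\sum_{k=s}^N\varepsilon_k/\sqrt{k}$ to a $2/\sqrt{N}$ bound breaks down. (Even on branch~(i), with $c=1/4$ and $\sum_{k=s}^N 1/k\approx\ln 2$, the constant you would obtain is closer to $4\ln 2\approx 2.77$ than to $2$, so the stated constant $14$ would also need adjusting.) If you want a rigorous version, you must either supply a separate argument for branch~(ii), or find another way to bound the weighted average of $\varepsilon_k$ over the random set $\mathcal{B}$; the paper itself does not spell this out.
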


As for the fixed sampling scheme, we find an improved convergence
rate for the strongly convex case under the adaptive sampling scheme
as well.
\begin{thm}
\label{strong-algo2} Suppose Assumptions \ref{strongconvex_and_L}
and \ref{fulldimandconvex} hold. Suppose $\left\{ x_{k}\right\} _{k\geq1}$
is generated according to Algorithm \ref{CSAinSIP} with policy (\ref{stepsizeinstrongconvex}).\textup{
}For each $k\geq1$ and $\epsilon_{k}=\frac{L}{N}\max\left\{ \mu_{f},\mu_{g}\right\} \max\left\{ \frac{L_{f}^{2}}{\mu_{f}^{2}},\frac{L_{g,\mathcal{X}}^{2}}{\mu_{g}^{2}}\right\} $,
we generate $M_{k}\geq1$ i.i.d. samples according to $\phi_{\mathfrak{\kappa}(\epsilon_{k}),\,x_{k}}$.
Then, for any $N\geq1$,

\[
f(\overline{x}_{N,s})-f(x^{*})\leq\frac{8L}{N+1}\max\left\{ \mu_{f},\mu_{g}\right\} \max\left\{ \frac{L_{f}^{2}}{\mu_{f}^{2}},\frac{L_{g,\mathcal{X}}^{2}}{\mu_{g}^{2}}\right\} ,
\]
and 

\[
\mathbb{E}_{\mathcal{P}}\left[G(\overline{x}_{N,s})\right]\leq\frac{9L}{N}\max\left\{ \mu_{f},\mu_{g}\right\} \max\left\{ \frac{L_{f}^{2}}{\mu_{f}^{2}},\frac{L_{g,\mathcal{X}}^{2}}{\mu_{g}^{2}}\right\} .
\]
\end{thm}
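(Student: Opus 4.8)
The plan is to obtain Theorem~\ref{strong-algo2} by feeding the per-iteration cut-generation guarantee of Proposition~\ref{cutgenerationinexpectation_adaptive} into the deterministic error bound of Theorem~\ref{strongconvexresult}, mirroring the fixed-sampling argument (Theorem~\ref{randstrong}) but with the adaptive estimate replacing Proposition~\ref{keyrandcut-inexpectation}. Two preliminaries make this legitimate: Theorem~\ref{strongconvexresult} holds on every sample path for \emph{any} choice of the cut points $\delta_k$ (its proof uses only $\varepsilon_k\ge0$ and the structure of policy~(\ref{stepsizeinstrongconvex})); and under Assumptions~\ref{strongconvex_and_L} and~\ref{fulldimandconvex} the adaptive distribution $\phi_{\kappa(\epsilon_k),x_k}$ is well defined (Lemma~\ref{existenceofmaximizer}) and $\mathcal{B}\neq\emptyset$ on every realization (Lemma~\ref{nonemptyBstronglyconvex}), so $\overline{x}_{N,s}$ is always defined. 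The optimality-gap bound is then immediate: the first inequality of Theorem~\ref{strongconvexresult} involves no $\varepsilon_k$ (objective evaluations are exact), so it holds pathwise and hence in expectation, giving precisely $f(\overline{x}_{N,s})-f(x^{*})\le\frac{8L}{N+1}\max\{\mu_f,\mu_g\}\max\{L_f^2/\mu_f^2,L_{g,\mathcal X}^2/\mu_g^2\}$.

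For the constraint violation I would take the pathwise bound of Theorem~\ref{strongconvexresult},
\[
G(\overline{x}_{N,s})\le\frac{8L}{N}\max\{\mu_f,\mu_g\}\max\Big\{\tfrac{L_f^2}{\mu_f^2},\tfrac{L_{g,\mathcal X}^2}{\mu_g^2}\Big\}+\frac{\sum_{k\in\mathcal B}k\,\varepsilon_k}{\sum_{k\in\mathcal B}k},
\]
apply $\mathbb E_{\mathcal P}$, and reduce to showing $\mathbb E_{\mathcal P}\big[(\sum_{k\in\mathcal B}k\,\varepsilon_k)/(\sum_{k\in\mathcal B}k)\big]\le\epsilon_k=\frac{L}{N}\max\{\mu_f,\mu_g\}\max\{L_f^2/\mu_f^2,L_{g,\mathcal X}^2/\mu_g^2\}$; since this $\epsilon_k$ equals exactly one eighth of the $\eta_k$ in policy~(\ref{stepsizeinstrongconvex}), the two terms then combine to the asserted $\frac{9L}{N}\max\{\mu_f,\mu_g\}\max\{L_f^2/\mu_f^2,L_{g,\mathcal X}^2/\mu_g^2\}$. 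The input for this is the natural filtration $\mathcal F_{k-1}=\sigma\{\delta_j^{(i)}:1\le i\le M_j,\ 1\le j\le k-1\}$: $x_k$ is $\mathcal F_{k-1}$-measurable and, conditionally on $\mathcal F_{k-1}$, the $k$-th batch of samples is i.i.d.\ from $\phi_{\kappa(\epsilon_k),x_k}$, so Proposition~\ref{cutgenerationinexpectation_adaptive} (valid for every $M_k\ge1$) yields $\mathbb E_{\mathcal P}[\varepsilon_k\mid\mathcal F_{k-1}]=G(x_k)-\mathbb E_{\mathcal P}[\max_{1\le i\le M_k}g(x_k,\delta_k^{(i)})\mid\mathcal F_{k-1}]\le\epsilon_k$, the \emph{same} constant $\epsilon$ for every $k$.

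Converting these per-iteration conditional bounds into the required bound on the expectation of the error term is the step I expect to be the main obstacle. The difficulty is that the averaging weights $k\,\mathbf 1[k\in\mathcal B]/\sum_{j\in\mathcal B}j$ are \emph{not} predictable: both the random index set $\mathcal B$ (whose membership is tied, through $g(x_k,\delta_k)\le\eta_k$, to how small $\varepsilon_k$ turned out) and the normalizing sum depend on the whole trajectory, so one cannot pass $\mathbb E_{\mathcal P}$ through the ratio term by term, and merely lower-bounding the denominator while using the crude $\varepsilon_k\le\overline M-\underline M$ is far too lossy to recover the constant $9$. The route I would pursue is to re-trace the proof of Theorem~\ref{strongconvexresult} and insert the conditional expectations one level earlier, at the per-iteration inequalities from which the error term is assembled: with policy~(\ref{stepsizeinstrongconvex}) one has $a_k=2/(k+1)$ regardless of whether $k\in\mathcal B$, hence $A_k=2/(k(k+1))$ is deterministic and the coefficient multiplying $\varepsilon_k$ in those inequalities (proportional to $k\,\mathbf 1[k\in\mathcal B]$) is $\mathcal F_k$-measurable and nonnegative, so taking $\mathbb E[\cdot\mid\mathcal F_{k-1}]$ replaces $\varepsilon_k$ by the bound $\epsilon$ before the summation and normalization; because this bound is uniform in $k$, the error contribution after normalization is at most $\epsilon$. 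This is exactly the route taken for the fixed-sampling analogue in Theorem~\ref{randstrong}; apart from invoking Proposition~\ref{cutgenerationinexpectation_adaptive} instead of Proposition~\ref{keyrandcut-inexpectation}, the argument is unchanged, and everything else — admissibility of policy~(\ref{stepsizeinstrongconvex}), $\mathcal B\neq\emptyset$, and the optimality-gap bound — is inherited directly from the already-stated results.
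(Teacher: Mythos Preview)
Your proposal is correct and follows exactly the paper's route: invoke Proposition~\ref{cutgenerationinexpectation_adaptive} to bound $\mathbb{E}[\varepsilon_k]$ by the constant $\epsilon_k=\frac{L}{N}\max\{\mu_f,\mu_g\}\max\{L_f^2/\mu_f^2,L_{g,\mathcal X}^2/\mu_g^2\}$, pass this through the weighted average to get $\mathbb{E}_{\mathcal P}\big[(\sum_{k\in\mathcal B}k\,\varepsilon_k)/(\sum_{k\in\mathcal B}k)\big]\le\epsilon_k$, and then add this to the deterministic $8L/N$ term from Theorem~\ref{strongconvexresult}. The paper's proof is literally three lines and does the same thing, simply writing ``It follows that'' for the step you flag as delicate; you are more scrupulous than the paper in worrying about the predictability of the weights $k\,\mathbf 1[k\in\mathcal B]/\sum_{j\in\mathcal B}j$, and the filtration argument you sketch (using that $A_k$ is deterministic under policy~(\ref{stepsizeinstrongconvex}) and that the bound $\epsilon$ is uniform in $k$) is a legitimate way to make that step rigorous.
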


In view of Theorem \ref{strong-algo2}, inexact CSA with adaptive
sampling achieves an $\mathcal{O}(1/N)$ rate of convergence in expectation,
in terms of the optimality gap and constraint violation, in the strongly
convex case. 
\begin{rem}
Through Proposition \ref{keyrandcut-inexpectation} and Proposition
\ref{cutgenerationinexpectation_adaptive}, we see two major differences
between the fixed sampling and adaptive sampling schemes. First, the
fixed sampling scheme requires batch samples, while only one sample
per iteration is needed to make the adaptive sampling scheme work
due to the inequality $\mathbb{E}_{\widetilde{\delta}\sim\phi_{\mathfrak{\kappa(\epsilon)},\,x}}\left[g\left(x,\widetilde{\delta}\right)\right]\geq G(x)-\epsilon$.
Of course, we get better performance if we use batch sampling under
the adaptive sampling since we always have $\max_{1\leq i\leq M}g(x,\delta^{(i)})\geq g(x,\delta^{(i)})$
for each $1\leq i\leq M$. Second, $\mathcal{P}=\times_{k\in\mathcal{B}}\phi_{\mathfrak{\kappa}(\epsilon_{k}),\,x_{k}}^{M_{k}}$
depends on the error tolerances and the current iterates under the
adaptive sampling, while $\mathcal{Q}=\times_{k\in\mathcal{B}}Q^{M_{k}}$
does not under the fixed sampling scheme. There is a trade-off between
the two sampling schemes. Under the fixed sampling scheme, we do not
need to change the sampling distribution iteration by iteration, but
it requires batch samples to achieve a desired cut generation tolerance.
Under the adaptive sampling scheme, we need to generate different
sampling distributions at different iterations, but the required number
of samples is much smaller. 
\end{rem}

\section{\label{proofs} Proofs of Main Results}

In this section, we provide the proofs for our main results. In Subsection
\ref{subsec:General-Error-Bounds}, we establish the general error
bounds for inexact CSA (Theorem \ref{varistepresult} for the generally
convex case and Theorem \ref{strongconvexresult} for the strongly
convex case). The details of the fixed sampling cut generation scheme
(Proposition \ref{keyrandcut-inexpectation}) and the corresponding
CSA convergence results (Theorems \ref{randvaristep} and \ref{randstrong})
are in Subsection \ref{subsec:Fixed-Sampling}. All material for the
adaptive sampling cut generation scheme (Proposition \ref{cutgenerationinexpectation_adaptive})
and the corresponding CSA convergence results (Theorems \ref{varistep-algo2}
and \ref{strong-algo2}) are in Subsection \ref{subsec:Adaptive-Sampling}.

\subsection{\label{subsec:General-Error-Bounds} General Error Bounds Analysis
for Inexact CSA}

\subsubsection{\label{subsec:General-Convex-Case} General Convex Case}

The following preliminary result establishes an important recursion
for CSA.
\begin{prop}
For stepsizes $\{\gamma_{k}\}_{k\geq1}$, tolerances $\{\eta_{k}\}_{k\geq1}$,
and $1\leq s\leq N$ in Algorithm \ref{CSAinSIP}, we have 

\begin{equation}
\sum_{k\in\mathcal{N}}\gamma_{k}(\eta_{k}-g(x,\delta_{k}))+\sum_{k\in\mathcal{B}}\gamma_{k}\langle f'(x_{k}),x_{k}-x\rangle\leq V(x_{s},x)+\frac{1}{2}\sum_{k\in\mathcal{B}}\gamma_{k}^{2}\|f'(x_{k})\|^{2}+\frac{1}{2}\sum_{k\in\mathcal{N}}\gamma_{k}^{2}\|g'(x_{k},\delta_{k})\|^{2},\label{recursion}
\end{equation}
for all $x\in\mathcal{X}$. 
\end{prop}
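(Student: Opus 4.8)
The plan is to derive a one-step recursion from Lemma~\ref{block} and then sum it over the index set $I=\{s,\ldots,N\}$, splitting the terms according to whether $k\in\mathcal{B}$ or $k\in\mathcal{N}$. Fix an arbitrary $x\in\mathcal{X}$. Applying Lemma~\ref{block} with the substitution $u=x$, $x\leftarrow x_k$, and $y=\gamma_k h_k$, and using $x_{k+1}=P_{x_k,\mathcal{X}}(\gamma_k h_k)$ from Algorithm~\ref{CSAinSIP}, one gets
\[
V(x_{k+1},x)\leq V(x_k,x)+\gamma_k\langle h_k,x-x_k\rangle+\frac{1}{2}\gamma_k^2\|h_k\|^2
\]
for every $k$, equivalently $\gamma_k\langle h_k,x_k-x\rangle\leq V(x_k,x)-V(x_{k+1},x)+\frac{1}{2}\gamma_k^2\|h_k\|^2$.

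Next I would treat the two cases separately. For $k\in\mathcal{B}$ we have $h_k=f'(x_k)$, so the recursion directly bounds $\gamma_k\langle f'(x_k),x_k-x\rangle$. For $k\in\mathcal{N}$ we have $h_k=g'(x_k,\delta_k)$, and convexity of $g(\cdot,\delta_k)$ gives $\langle g'(x_k,\delta_k),x_k-x\rangle\geq g(x_k,\delta_k)-g(x,\delta_k)$; since membership in $\mathcal{N}=I\setminus\mathcal{B}$ is precisely the statement $g(x_k,\delta_k)>\eta_k$, we obtain
\[
\gamma_k\bigl(\eta_k-g(x,\delta_k)\bigr)\leq\gamma_k\langle g'(x_k,\delta_k),x_k-x\rangle\leq V(x_k,x)-V(x_{k+1},x)+\frac{1}{2}\gamma_k^2\|g'(x_k,\delta_k)\|^2 .
\]

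Finally I would sum the $\mathcal{B}$-inequalities and the $\mathcal{N}$-inequalities over all $k\in\{s,\ldots,N\}$. The accumulated right-hand side contains the telescoping sum $\sum_{k=s}^{N}\bigl(V(x_k,x)-V(x_{k+1},x)\bigr)=V(x_s,x)-V(x_{N+1},x)\leq V(x_s,x)$, where $-V(x_{N+1},x)$ is dropped using nonnegativity of the prox-function (a consequence of strong convexity of $\omega_{\mathcal{X}}$), while the remaining quadratic terms separate exactly into the two sums $\frac{1}{2}\sum_{k\in\mathcal{B}}\gamma_k^2\|f'(x_k)\|^2$ and $\frac{1}{2}\sum_{k\in\mathcal{N}}\gamma_k^2\|g'(x_k,\delta_k)\|^2$ appearing in (\ref{recursion}). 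This yields the claimed inequality.

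I do not expect a genuine obstacle here: this is the standard mirror-descent/CSA telescoping argument. The only points requiring care are getting the direction of the convexity inequality for $g(\cdot,\delta_k)$ correct and noting that membership in $\mathcal{N}$ is exactly what supplies the bound $g(x_k,\delta_k)>\eta_k$, which is what lets us replace $-g(x_k,\delta_k)$ by $-\eta_k$ in the $\mathcal{N}$-terms.
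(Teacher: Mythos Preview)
Your proposal is correct and follows essentially the same approach as the paper: apply Lemma~\ref{block} to obtain the one-step recursion, split into the $\mathcal{B}$ and $\mathcal{N}$ cases (using convexity of $g(\cdot,\delta_k)$ together with $g(x_k,\delta_k)>\eta_k$ on $\mathcal{N}$), then telescope and drop the nonnegative term $V(x_{N+1},x)$.
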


\begin{proof}
For any $s\leq k\leq N$, using Lemma \ref{block}, we have 
\begin{eqnarray}
V(x_{k+1},x)\leq V(x_{k},x)+\gamma_{k}\langle h_{k},x-x_{k}\rangle+\frac{1}{2}\gamma_{k}^{2}\|h_{k}\|^{2}.\label{recur}
\end{eqnarray}
Observe that if $k\in\mathcal{B}$, then $h_{k}=f'(x_{k})$ and $\langle h_{k},x_{k}-x\rangle=\langle f'(x_{k}),x_{k}-x\rangle$.
Moreover, if $k\in\mathcal{N}$, then $h_{k}=g'(x_{k},\delta_{k})$
and 
\[
\langle h_{k},x_{k}-x\rangle=\langle g'(x_{k},\delta_{k}),x_{k}-x\rangle\geq g(x_{k},\delta_{k})-g(x,\delta_{k})\geq\eta_{k}-g(x,\delta_{k}).
\]
Summing up the inequalities in (\ref{recur}) from $k=s$ to $N$
and using the previous two observations, we obtain 

\[
\begin{array}{rcl}
V(x_{N+1},x) & \leq & V(x_{s},x)-\sum_{k=s}^{N}\gamma_{k}\langle h_{k},x_{k}-x\rangle+\frac{1}{2}\sum_{k=s}^{N}\gamma_{k}^{2}\|h_{k}\|^{2}\\
 & = & V(x_{s},x)-\big[\sum_{k\in\mathcal{N}}\gamma_{k}\langle g'(x_{k},\delta_{k}),x_{k}-x\rangle+\sum_{k\in\mathcal{B}}\gamma_{k}\langle f'(x_{k}),x_{k}-x\rangle\big]+\frac{1}{2}\sum_{k=s}^{N}\gamma_{k}^{2}\|h_{k}\|^{2}\\
 & \leq & V(x_{s},x)-\big[\sum_{k\in\mathcal{N}}\gamma_{k}(\eta_{k}-g(x,\delta_{k}))+\sum_{k\in\mathcal{B}}\gamma_{k}\langle f'(x_{k}),x_{k}-x\rangle\big]\\
 &  & +\frac{1}{2}\sum_{k\in\mathcal{B}}\gamma_{k}^{2}\|f'(x_{k})\|^{2}+\frac{1}{2}\sum_{k\in\mathcal{N}}\gamma_{k}^{2}\|g'(x_{k},\delta_{k})\|^{2}.
\end{array}
\]
\end{proof}
We next present a sufficient condition for the output $\overline{x}_{N,s}$
to be well-defined. 
\begin{lem}
\label{lemforsuff} Suppose 
\begin{eqnarray}
\frac{N-s+1}{2}\min_{k\in\mathcal{N}}\gamma_{k}\eta_{k}>D_{\mathcal{X}}^{2}+\frac{1}{2}\sum_{k\in\mathcal{B}}\gamma_{k}^{2}L_{f}^{2}+\frac{1}{2}\sum_{k\in\mathcal{N}}\gamma_{k}^{2}L_{g,\mathcal{X}}^{2}\label{suff}
\end{eqnarray}
holds. Then $\mathcal{B}\neq\emptyset$, i.e., $\overline{x}_{N,s}$
is well-defined. Furthermore, either (i) $|\mathcal{B}|\geq(N-s+1)/2$
or (ii) $\sum_{k\in\mathcal{B}}\gamma_{k}\langle f'(x_{k}),x_{k}-x^{*}\rangle<0$. 
\end{lem}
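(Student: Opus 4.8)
The plan is to feed the feasible optimum $x^{*}$ into the recursion (\ref{recursion}) and then argue by contradiction. Since $x^{*}\in\mathcal{X}$ satisfies $G(x^{*})\leq 0$, we have $g(x^{*},\delta_{k})\leq G(x^{*})\leq 0$ for every $k$, so each summand of the $\mathcal{N}$-sum in (\ref{recursion}) obeys $\gamma_{k}\bigl(\eta_{k}-g(x^{*},\delta_{k})\bigr)\geq\gamma_{k}\eta_{k}$. Combining this with $V(x_{s},x^{*})\leq D_{\mathcal{X}}^{2}$ (definition of $D_{\mathcal{X}}$), $\|f'(x_{k})\|\leq L_{f}$ (Assumption~A2), and $\|g'(x_{k},\delta_{k})\|\leq L_{g,\mathcal{X}}$ (Assumption~A4), inequality (\ref{recursion}) evaluated at $x=x^{*}$ gives
\[
\sum_{k\in\mathcal{N}}\gamma_{k}\eta_{k}+\sum_{k\in\mathcal{B}}\gamma_{k}\langle f'(x_{k}),x_{k}-x^{*}\rangle\leq D_{\mathcal{X}}^{2}+\frac{1}{2}\sum_{k\in\mathcal{B}}\gamma_{k}^{2}L_{f}^{2}+\frac{1}{2}\sum_{k\in\mathcal{N}}\gamma_{k}^{2}L_{g,\mathcal{X}}^{2}.
\]

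Next I would prove the dichotomy. Assume, toward a contradiction, that neither (i) nor (ii) holds, i.e.\ $|\mathcal{B}|<(N-s+1)/2$ and $\sum_{k\in\mathcal{B}}\gamma_{k}\langle f'(x_{k}),x_{k}-x^{*}\rangle\geq 0$. From $|\mathcal{B}|<(N-s+1)/2$ and $|\mathcal{B}|+|\mathcal{N}|=N-s+1$ we get $|\mathcal{N}|>(N-s+1)/2>0$, so $\mathcal{N}\neq\emptyset$ and
\[
\sum_{k\in\mathcal{N}}\gamma_{k}\eta_{k}\geq|\mathcal{N}|\min_{k\in\mathcal{N}}\gamma_{k}\eta_{k}>\frac{N-s+1}{2}\min_{k\in\mathcal{N}}\gamma_{k}\eta_{k}.
\]
Plugging this into the inequality from the previous paragraph and discarding the nonnegative $\mathcal{B}$-sum yields
\[
\frac{N-s+1}{2}\min_{k\in\mathcal{N}}\gamma_{k}\eta_{k}<D_{\mathcal{X}}^{2}+\frac{1}{2}\sum_{k\in\mathcal{B}}\gamma_{k}^{2}L_{f}^{2}+\frac{1}{2}\sum_{k\in\mathcal{N}}\gamma_{k}^{2}L_{g,\mathcal{X}}^{2},
\]
which contradicts the hypothesis (\ref{suff}). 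Hence (i) or (ii) must hold.

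Finally, $\mathcal{B}\neq\emptyset$, so that $\overline{x}_{N,s}$ is well-defined, follows in both cases: if (ii) holds then $\sum_{k\in\mathcal{B}}\gamma_{k}\langle f'(x_{k}),x_{k}-x^{*}\rangle<0$, which is impossible over an empty index set; if (i) holds then $|\mathcal{B}|\geq(N-s+1)/2>0$ since $1\leq s\leq N$. I do not expect a genuine obstacle here; the only points requiring care are the degenerate case $\mathcal{N}=\emptyset$ (then $\mathcal{B}=I\neq\emptyset$ and (i) holds trivially, so $\min_{k\in\mathcal{N}}$ is never evaluated) and keeping track of the sign of the uncontrolled term $\sum_{k\in\mathcal{B}}\gamma_{k}\langle f'(x_{k}),x_{k}-x^{*}\rangle$, which is exactly why the conclusion is phrased as a dichotomy rather than as a clean lower bound on $|\mathcal{B}|$.
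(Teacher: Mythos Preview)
Your argument is correct and follows essentially the same route as the paper: plug $x^{*}$ into the recursion (\ref{recursion}), use feasibility of $x^{*}$ and the Lipschitz/diameter bounds to simplify, and then derive a contradiction from the simultaneous failure of (i) and (ii). Your write-up is in fact slightly more careful than the paper's, since you explicitly handle the degenerate case $\mathcal{N}=\emptyset$ and spell out why each branch of the dichotomy forces $\mathcal{B}\neq\emptyset$.
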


\begin{proof}
Fixing $x=x^{*}$ in (\ref{recursion}) gives

\[
\sum_{k\in\mathcal{N}}\gamma_{k}(\eta_{k}-g(x^{*},\delta_{k}))+\sum_{k\in\mathcal{B}}\gamma_{k}\langle f'(x_{k}),x_{k}-x^{*}\rangle\leq V(x_{s},x^{*})+\frac{1}{2}\sum_{k\in\mathcal{B}}\gamma_{k}^{2}L_{f}^{2}+\frac{1}{2}\sum_{k\in\mathcal{N}}\gamma_{k}^{2}L_{g,\mathcal{X}}^{2}.
\]
If $\sum_{k\in\mathcal{B}}\gamma_{k}\langle f'(x_{k}),x_{k}-x^{*}\rangle\geq0$,
then since $g(x^{*},\delta_{k})\leq G(x^{*})\leq0$ we have 
\begin{eqnarray}
\sum_{k\in\mathcal{N}}\gamma_{k}\eta_{k}\leq V(x_{s},x^{*})+\frac{1}{2}\sum_{k\in\mathcal{B}}\gamma_{k}^{2}L_{f}^{2}+\frac{1}{2}\sum_{k\in\mathcal{N}}\gamma_{k}^{2}L_{g,\mathcal{X}}^{2}.\label{contr}
\end{eqnarray}
Suppose that $|\mathcal{B}|<(N-s+1)/2$, i.e., $|\mathcal{N}|\geq(N-s+1)/2$.
Then, 
\[
\sum_{k\in\mathcal{N}}\gamma_{k}\eta_{k}\geq\frac{N-s+1}{2}\min_{k\in\mathcal{N}}\gamma_{k}\eta_{k}>D_{\mathcal{X}}^{2}+\frac{1}{2}\sum_{k\in\mathcal{B}}\gamma_{k}^{2}L_{f}^{2}+\frac{1}{2}\sum_{k\in\mathcal{N}}\gamma_{k}^{2}L_{g,\mathcal{X}}^{2},
\]
which contradicts (\ref{contr}). Thus, condition (i) holds. Alternatively,
if $\sum_{k\in\mathcal{B}}\gamma_{k}\langle f'(x_{k}),x_{k}-x^{*}\rangle<0$,
then condition (ii) holds. 
\end{proof}
We can now prove Lemma \ref{nonemptyB} based on the above result.
\begin{proof}[Proof of Lemma \ref{nonemptyB}]
 For $\{\gamma_{k}\}_{k\geq1}$, $\{\eta_{k}\}_{k\geq1}$, and $s$
chosen as in (\ref{variableparameters}), for $\mathcal{N}\subset\{\lceil N/2\rceil,\lceil N/2\rceil+1,\ldots,N\}$
we have

\[
\frac{N-s+1}{2}\min_{k\in\mathcal{N}}\gamma_{k}\eta_{k}\geq\frac{N}{4}\min_{k\in\mathcal{N}}\frac{6D_{\mathcal{X}}^{2}}{k}\geq\frac{3}{2}D_{\mathcal{X}}^{2},
\]
and for $\mathcal{B}\cup\mathcal{N}=\{\lceil N/2\rceil,\lceil N/2\rceil+1,\ldots,N\}$
we have 

\begin{eqnarray}
D_{\mathcal{X}}^{2}+\frac{1}{2}\sum_{k\in\mathcal{B}}\gamma_{k}^{2}L_{f}^{2}+\frac{1}{2}\sum_{k\in\mathcal{N}}\gamma_{k}^{2}L_{g,\mathcal{X}}^{2} & \leq & D_{\mathcal{X}}^{2}+\frac{1}{2}\sum_{k\in\mathcal{B}}\frac{D_{\mathcal{X}}^{2}L_{f}^{2}}{k(L_{f}+L_{g,\mathcal{X}})^{2}}+\frac{1}{2}\sum_{k\in\mathcal{N}}\frac{D_{\mathcal{X}}^{2}L_{g,\mathcal{X}}^{2}}{k(L_{f}+L_{g,\mathcal{X}})^{2}}\nonumber \\
 & \leq & D_{\mathcal{X}}^{2}+\frac{1}{2}\sum_{k=\lceil N/2\rceil}^{N}\frac{D_{\mathcal{X}}^{2}}{\lceil N/2\rceil}\leq\frac{3}{2}D_{\mathcal{X}}^{2}.\label{willbeusedlater}
\end{eqnarray}
By Lemma \ref{lemforsuff}, $\mathcal{B}\neq\emptyset$ and $\overline{x}_{N,s}$
is well-defined. 
\end{proof}
The main convergence properties of Algorithm \ref{CSAinSIP} are established
next in Proposition \ref{converg}.
\begin{prop}
\label{converg} Suppose that $\{\gamma_{k}\}_{k\geq1}$ and $\{\eta_{k}\}_{k\geq1}$
are chosen such that (\ref{suff}) holds, and let $\left\{ x_{k}\right\} _{k\geq1}$
be produced by Algorithm \ref{CSAinSIP}. Then, for any $1\leq s\leq N$
we have
\begin{eqnarray}
f(\overline{x}_{N,s})-f(x^{*})\leq\frac{2D_{\mathcal{X}}^{2}+\sum_{k\in\mathcal{B}}\gamma_{k}^{2}L_{f}^{2}+\sum_{k\in\mathcal{N}}\gamma_{k}^{2}L_{g,\mathcal{X}}^{2}}{(N-s+1)\min_{k\in\mathcal{B}}\gamma_{k}},\label{convg1}
\end{eqnarray}
and
\begin{eqnarray}
G(\overline{x}_{N,s})\leq\frac{\sum_{k\in\mathcal{B}}\gamma_{k}(\eta_{k}+\varepsilon_{k})}{\sum_{k\in\mathcal{B}}\gamma_{k}}.\label{convg2}
\end{eqnarray}
\end{prop}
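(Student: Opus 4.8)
The plan is to derive both inequalities from the master recursion (\ref{recursion}) specialized at $x = x^*$, using the parameter structure that makes (\ref{suff}) hold. First I would set $x = x^*$ in (\ref{recursion}) and bound each piece. Since $x^*$ is feasible, $g(x^*,\delta_k) \le G(x^*) \le 0$ for every $k$, so the term $\sum_{k\in\mathcal{N}}\gamma_k(\eta_k - g(x^*,\delta_k))$ is at least $\sum_{k\in\mathcal{N}}\gamma_k\eta_k \ge 0$; dropping it only strengthens the bound. Using $\|f'(x_k)\| \le L_f$ and $\|g'(x_k,\delta_k)\| \le L_{g,\mathcal{X}}$ (Assumptions A2, A4) and $V(x_s,x^*) \le D_{\mathcal{X}}^2$, the recursion yields
\[
\sum_{k\in\mathcal{B}}\gamma_k\langle f'(x_k), x_k - x^*\rangle \le D_{\mathcal{X}}^2 + \tfrac{1}{2}\sum_{k\in\mathcal{B}}\gamma_k^2 L_f^2 + \tfrac{1}{2}\sum_{k\in\mathcal{N}}\gamma_k^2 L_{g,\mathcal{X}}^2.
\]

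For the optimality gap (\ref{convg1}), I would invoke Lemma \ref{lemforsuff}, which (under (\ref{suff})) guarantees $\mathcal{B}\ne\emptyset$ and gives the dichotomy: either $|\mathcal{B}| \ge (N-s+1)/2$, or $\sum_{k\in\mathcal{B}}\gamma_k\langle f'(x_k), x_k - x^*\rangle < 0$. In the second case, convexity of $f$ gives $f(x_k) - f(x^*) \le \langle f'(x_k), x_k - x^*\rangle$, so $\sum_{k\in\mathcal{B}}\gamma_k(f(x_k)-f(x^*)) < 0$, whence by convexity (Jensen on the weighted average $\overline{x}_{N,s}$) $f(\overline{x}_{N,s}) - f(x^*) < 0$ and (\ref{convg1}) holds trivially. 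In the first case, I combine $f(\overline{x}_{N,s}) - f(x^*) \le \big(\sum_{k\in\mathcal{B}}\gamma_k\big)^{-1}\sum_{k\in\mathcal{B}}\gamma_k\langle f'(x_k), x_k - x^*\rangle$ with the displayed bound above, then lower-bound the denominator: $\sum_{k\in\mathcal{B}}\gamma_k \ge |\mathcal{B}|\min_{k\in\mathcal{B}}\gamma_k \ge \tfrac{N-s+1}{2}\min_{k\in\mathcal{B}}\gamma_k$, which produces exactly the factor $(N-s+1)\min_{k\in\mathcal{B}}\gamma_k$ in the denominator of (\ref{convg1}) after the numerator picks up the factor of $2$.

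For the constraint violation (\ref{convg2}), the argument is more direct and does not use the recursion at all. For each $k\in\mathcal{B}$ we have, by definition of $\mathcal{B}$, $g(x_k,\delta_k)\le\eta_k$, and by definition of the cut-generation error $\varepsilon_k = G(x_k) - g(x_k,\delta_k)$, hence $G(x_k) = g(x_k,\delta_k) + \varepsilon_k \le \eta_k + \varepsilon_k$. Since $G$ is convex (it is a supremum of the convex functions $g(\cdot,\delta)$), applying Jensen's inequality to the weighted average $\overline{x}_{N,s} = \big(\sum_{k\in\mathcal{B}}\gamma_k\big)^{-1}\sum_{k\in\mathcal{B}}\gamma_k x_k$ gives $G(\overline{x}_{N,s}) \le \big(\sum_{k\in\mathcal{B}}\gamma_k\big)^{-1}\sum_{k\in\mathcal{B}}\gamma_k G(x_k) \le \big(\sum_{k\in\mathcal{B}}\gamma_k\big)^{-1}\sum_{k\in\mathcal{B}}\gamma_k(\eta_k + \varepsilon_k)$, which is (\ref{convg2}).

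The only subtle point — and the step I would be most careful about — is the case split in the optimality-gap bound: one must handle the possibility that $\mathcal{B}$ is small by falling back on the sign condition from Lemma \ref{lemforsuff}, and check that in that branch the claimed bound holds vacuously because the left-hand side is negative. Everything else is a matter of plugging the uniform Lipschitz bounds and the diameter bound into the recursion and reading off the constants; no delicate estimation is involved.
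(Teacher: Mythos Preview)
Your proposal is correct and follows essentially the same argument as the paper: specialize the recursion (\ref{recursion}) at $x=x^*$, invoke the dichotomy of Lemma \ref{lemforsuff} to handle the two cases for (\ref{convg1}), and use the definition of $\mathcal{B}$ together with convexity of $G$ for (\ref{convg2}). The only cosmetic difference is that the paper keeps the nonnegative term $\sum_{k\in\mathcal{N}}\gamma_k\eta_k$ on the left side before dropping it, whereas you drop it immediately; the content is identical.
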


\begin{proof}
First, we show that (\ref{convg1}) holds. By Lemma \ref{lemforsuff},
if $\sum_{k\in\mathcal{B}}\gamma_{k}\langle f'(x_{k}),x_{k}-x^{*}\rangle<0$,
then by the convexity of $f$ and the definition of $\overline{x}_{N,s}$,
we have

\begin{equation}
f(\overline{x}_{N,s})-f(x^{*})\leq\frac{\sum_{k\in\mathcal{B}}\gamma_{k}(f(x_{k})-f(x^{*}))}{\sum_{k\in\mathcal{B}}\gamma_{k}}\leq\frac{\sum_{k\in\mathcal{B}}\gamma_{k}\langle f'(x_{k}),x_{k}-x^{*}\rangle}{\sum_{k\in\mathcal{B}}\gamma_{k}}<0.\label{case1}
\end{equation}
If $|\mathcal{B}|\geq(N-s+1)/2$, we have $\sum_{k\in\mathcal{B}}\gamma_{k}\geq|\mathcal{B}|\min_{k\in\mathcal{B}}\gamma_{k}\geq\frac{N-s+1}{2}\min_{k\in\mathcal{B}}\gamma_{k}$.
By fixing $x=x^{*}$ in (\ref{recursion}), it follows from the definition
of $\overline{x}_{N,s}$ and the convexity of $f$ that
\begin{align*}
\sum_{k\in\mathcal{N}}\gamma_{k}\eta_{k}+\sum_{k\in\mathcal{B}}\gamma_{k}[f(\overline{x}_{N,s})-f(x^{*})] & \leq\sum_{k\in\mathcal{N}}\gamma_{k}\eta_{k}+\sum_{k\in\mathcal{B}}[\gamma_{k}(f(x_{k})-f(x^{*}))]\\
 & \leq D_{\mathcal{X}}^{2}+\frac{1}{2}\sum_{k\in\mathcal{B}}\gamma_{k}^{2}L_{f}^{2}+\frac{1}{2}\sum_{k\in\mathcal{N}}\gamma_{k}^{2}L_{g,\mathcal{X}}^{2}.
\end{align*}
Noticing $\sum_{k\in\mathcal{N}}\gamma_{k}\eta_{k}\geq0$, it follows
that
\begin{align}
f(\overline{x}_{N,s})-f(x^{*})\leq\frac{2D_{\mathcal{X}}^{2}+\sum_{k\in\mathcal{B}}\gamma_{k}^{2}L_{f}^{2}+\sum_{k\in\mathcal{N}}\gamma_{k}^{2}L_{g,\mathcal{X}}^{2}}{(N-s+1)\min_{k\in\mathcal{B}}\gamma_{k}}.\label{case2}
\end{align}
We then have (\ref{convg1}) by the two inequalities (\ref{case1})
and (\ref{case2}). Next we prove (\ref{convg2}). For any $k\in\mathcal{B}$,
we have $g(x_{k},\delta_{k})\leq\eta_{k}$ and so $G(x_{k})\leq\eta_{k}+\varepsilon_{k}$.
From the definition of $\overline{x}_{N,s}$ and the convexity of
$G\left(\cdot\right)$, we then have
\[
G(\overline{x}_{N,s})\leq\frac{\sum_{k\in\mathcal{B}}\gamma_{k}G(x_{k})}{\sum_{k\in\mathcal{B}}\gamma_{k}}\leq\frac{\sum_{k\in\mathcal{B}}\gamma_{k}(\eta_{k}+\varepsilon_{k})}{\sum_{k\in\mathcal{B}}\gamma_{k}}.
\]
\end{proof}
Now we may prove the error bounds for inexact CSA for the generally
convex case.
\begin{proof}[Proof of Theorem \ref{varistepresult}]
 The bound on the optimality gap comes from (\ref{convg1}). Recall
(\ref{willbeusedlater}), i.e. we have $2D_{\mathcal{X}}^{2}+\sum_{k\in\mathcal{B}}\gamma_{k}^{2}L_{f}^{2}+\sum_{k\in\mathcal{N}}\gamma_{k}^{2}L_{g,\mathcal{X}}^{2}\leq3D_{\mathcal{X}}^{2}$.
From $s=\lceil\frac{N}{2}\rceil\leq\frac{N}{2}+1$, $\gamma_{k}=\frac{D_{\mathcal{X}}}{\sqrt{k}(L_{f}+L_{g,\mathcal{X}})}$,
and $\mathcal{B}\subset\{s,\ldots,N\}$, we obtain $(N-s+1)\min_{k\in\mathcal{B}}\gamma_{k}\geq\frac{N}{2}\frac{D_{\mathcal{X}}}{\sqrt{N}(L_{f}+L_{g,\mathcal{X}})}=\frac{\sqrt{N}D_{\mathcal{X}}}{2(L_{f}+L_{g,\mathcal{X}})}$.
It then follows from (\ref{convg1}) that 
\begin{align*}
f(\overline{x}_{N,s})-f(x^{*})\leq\frac{6D_{\mathcal{X}}(L_{f}+L_{g,\mathcal{X}})}{\sqrt{N}}.
\end{align*}
The bound on constraint violation is by (\ref{convg2}). For any $k\in\mathcal{B}\subset\{\lceil\frac{N}{2}\rceil,\lceil\frac{N}{2}\rceil+1,\ldots,N\}$,
we have $\gamma_{k}\eta_{k}=\frac{6D_{\mathcal{X}}^{2}}{k}\leq\frac{12D_{\mathcal{X}}^{2}}{N}$
and $\gamma_{k}\geq\frac{D_{\mathcal{X}}}{\sqrt{N}(L_{f}+L_{g,\mathcal{X}})}$.
It then follows from (\ref{convg2}) that 
\[
G(\overline{x}_{N,s})\leq\frac{12D_{\mathcal{X}}(L_{f}+L_{g,\mathcal{X}})}{\sqrt{N}}+\frac{\sum_{k\in\mathcal{B}}\varepsilon_{k}/\sqrt{k}}{\sum_{k\in\mathcal{B}}1/\sqrt{k}}.
\]
\end{proof}

\subsubsection{\label{subsec:Strongly-Convex-Case}Strongly Convex Case}

Now we consider the general error bounds for the strongly convex case
(Theorem \ref{strongconvexresult}). The following lemma will be used
in subsequent results, its proof is straightforward and so the details
are skipped. We remind the reader that $\{A_{k}\}_{k\geq1}$ is defined
in Section \ref{generalresults}.
\begin{lem}
\label{triangle} For all $k\geq1$, let $a_{k}\in(0,1]$ and $A_{k}>0$.
If sequences $\{\triangle_{k}\}_{k\geq1}$ and $\{B_{k}\}_{k\geq1}$
satisfy $\triangle_{k+1}\leq(1-a_{k})\triangle_{k}+B_{k}$ for all
$k\geq1$, then for any $1\leq s\leq k$ we have
\[
\frac{\triangle_{k+1}}{A_{k}}\leq\frac{(1-a_{s})\triangle_{s}}{A_{s}}+\sum_{i=s}^{k}\frac{B_{i}}{A_{i}}.
\]
\end{lem}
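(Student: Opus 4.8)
The statement to prove is Lemma~\ref{triangle}: given $\triangle_{k+1}\le(1-a_k)\triangle_k+B_k$ with $a_k\in(0,1]$ and $A_k>0$ (recalling $A_k=(1-a_k)A_{k-1}$ for $k\ge2$, $A_1=1$), show that $\triangle_{k+1}/A_k\le(1-a_s)\triangle_s/A_s+\sum_{i=s}^k B_i/A_i$ for all $1\le s\le k$.

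The plan is a standard "divide by the integrating factor and telescope" argument. First I would observe that $A_i=(1-a_i)A_{i-1}$ rearranges to $(1-a_i)/A_i = 1/A_{i-1}$ for $i\ge2$, and more usefully that dividing the recursion $\triangle_{i+1}\le(1-a_i)\triangle_i+B_i$ by $A_i$ (which is positive, so the inequality direction is preserved) gives
\[
\frac{\triangle_{i+1}}{A_i}\le\frac{(1-a_i)\triangle_i}{A_i}+\frac{B_i}{A_i}=\frac{\triangle_i}{A_{i-1}}+\frac{B_i}{A_i}
\]
for every $i\ge2$. The key structural fact is that the term $(1-a_i)\triangle_i/A_i$ collapses into $\triangle_i/A_{i-1}$, which is exactly the left-hand side shape at index $i-1$. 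So the sequence $c_i:=\triangle_i/A_{i-1}$ (for $i\ge2$) satisfies $c_{i+1}\le c_i + B_i/A_i$.

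Next I would telescope this one-step inequality. Starting the recursion at $i=s+1$ (assuming $s\ge1$ so that $s+1\ge2$) and iterating up to $i=k$ gives $c_{k+1}\le c_{s+1}+\sum_{i=s+1}^k B_i/A_i$, i.e.
\[
\frac{\triangle_{k+1}}{A_k}\le\frac{\triangle_{s+1}}{A_s}+\sum_{i=s+1}^k\frac{B_i}{A_i}.
\]
Finally I would bound $\triangle_{s+1}/A_s$ using the original recursion at index $s$, namely $\triangle_{s+1}\le(1-a_s)\triangle_s+B_s$, so $\triangle_{s+1}/A_s\le(1-a_s)\triangle_s/A_s+B_s/A_s$; substituting this into the displayed bound and absorbing the $B_s/A_s$ term into the sum yields
\[
\frac{\triangle_{k+1}}{A_k}\le\frac{(1-a_s)\triangle_s}{A_s}+\sum_{i=s}^k\frac{B_i}{A_i},
\]
which is the claim. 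One should handle the trivial edge case $s=k$ directly (it is just the recursion at index $k$ divided by $A_k$, using $1/A_k\le(1-a_k)/A_{k-1}$... actually more simply $\triangle_{k+1}\le(1-a_k)\triangle_k+B_k$ divided by $A_k$ gives exactly the statement since $(1-a_k)/A_k=1/A_{k-1}$ and the claimed RHS has $s=k$, so it reads $(1-a_k)\triangle_k/A_k+B_k/A_k$ — consistent).

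There is no real obstacle here; the lemma is elementary, which is presumably why the authors say "its proof is straightforward and so the details are skipped." The only mild subtlety is getting the index bookkeeping right at the lower endpoint --- making sure one applies the telescoped bound from $s+1$ to $k$ and then separately peels off the index-$s$ step to match the precise form $(1-a_s)\triangle_s/A_s$ on the right-hand side rather than $\triangle_{s+1}/A_s$. A clean alternative that avoids the endpoint fuss is to prove it by induction on $k-s$: the base case $k=s$ is the recursion at $k$ divided by $A_k$, and the inductive step applies the recursion at $k$ (divided by $A_k$, using $(1-a_k)/A_k=1/A_{k-1}$) to reduce from $k$ to $k-1$ while the $\sum_{i=s}^{k-1}$ term grows to $\sum_{i=s}^k$.
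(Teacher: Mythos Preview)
Your proof is correct; it is the standard integrating-factor/telescoping argument. The paper itself omits the proof entirely (``its proof is straightforward and so the details are skipped''), so there is nothing to compare against, but what you have written is exactly the expected route and would serve as the missing details.
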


We remind the reader that $\{\rho_{k}\}_{k\geq1}$ in the next result
is originally defined in Section \ref{generalresults}.
\begin{prop}
Suppose Assumption \ref{strongconvex_and_L} holds. Choose stepsizes
$\{\gamma_{k}\}_{k\geq1}$, tolerances $\{\eta_{k}\}_{k\geq1}$, and
$1\leq s\leq N$. Let $\left\{ x_{k}\right\} _{k\geq1}$ be produced
according to Algorithm \ref{CSAinSIP}, then
\begin{align}
 & \sum_{k\in\mathcal{N}}\rho_{k}(\eta_{k}-g(x,\delta_{k}))+\sum_{k\in\mathcal{B}}\rho_{k}(f(x_{k})-f(x))\nonumber \\
\leq & \frac{(1-a_{s})V(x_{s},x)}{A_{s}}+\frac{1}{2}\sum_{k\in\mathcal{B}}\rho_{k}\gamma_{k}\|f'(x_{k})\|^{2}+\frac{1}{2}\sum_{k\in\mathcal{N}}\rho_{k}\gamma_{k}\|g'(x_{k},\delta_{k})\|^{2},\label{recursforstrong}
\end{align}
for all $x\in\mathcal{X}$. 
\end{prop}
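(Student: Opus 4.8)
The plan is to mirror the recursion proof used in the generally convex case, but to replace plain convexity of $f$ and of $g(\cdot,\delta_k)$ by their strong convexity; the extra quadratic term this produces is absorbed, via Assumption \ref{strongconvex_and_L}(ii), into a \emph{contraction} of the Bregman distance, and the resulting contractive recursion is then unrolled with Lemma \ref{triangle}. Concretely, I would start from Lemma \ref{block} applied with $y=\gamma_k h_k$ and $x_{k+1}=P_{x_k,\mathcal{X}}(\gamma_k h_k)$, which gives, for each $s\le k\le N$,
\[
V(x_{k+1},x)\le V(x_k,x)+\gamma_k\langle h_k,x-x_k\rangle+\frac{1}{2}\gamma_k^2\|h_k\|^2 .
\]

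Next I would split on the two cases defining $h_k$. If $k\in\mathcal{B}$, then $h_k=f'(x_k)$, and strong convexity of $f$ with parameter $\mu_f$ yields $\langle f'(x_k),x-x_k\rangle\le f(x)-f(x_k)-\frac{\mu_f}{2}\|x-x_k\|^2$. If $k\in\mathcal{N}$, then $h_k=g'(x_k,\delta_k)$, and strong convexity of $g(\cdot,\delta_k)$ with parameter $\mu_g$, together with $g(x_k,\delta_k)>\eta_k$ (the definition of $\mathcal{N}$), yields $\langle g'(x_k,\delta_k),x-x_k\rangle\le g(x,\delta_k)-\eta_k-\frac{\mu_g}{2}\|x-x_k\|^2$. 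In both cases Assumption \ref{strongconvex_and_L}(ii), i.e. $V(x_k,x)\le\frac{L}{2}\|x_k-x\|^2$, lets me bound $\frac{\mu}{2}\|x-x_k\|^2\ge\frac{\mu}{L}V(x_k,x)$; multiplying through by $\gamma_k$ and recalling that $a_k$ equals $\frac{\mu_f\gamma_k}{L}$ on $\mathcal{B}$ and $\frac{\mu_g\gamma_k}{L}$ on $\mathcal{N}$, I obtain the single contractive recursion $V(x_{k+1},x)\le(1-a_k)V(x_k,x)+B_k$ for $s\le k\le N$, where $B_k=\gamma_k\big(f(x)-f(x_k)\big)+\frac{1}{2}\gamma_k^2\|f'(x_k)\|^2$ for $k\in\mathcal{B}$ and $B_k=-\gamma_k\big(\eta_k-g(x,\delta_k)\big)+\frac{1}{2}\gamma_k^2\|g'(x_k,\delta_k)\|^2$ for $k\in\mathcal{N}$.

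With this recursion I would invoke Lemma \ref{triangle} with $\triangle_k:=V(x_k,x)$ (using that $a_k\in(0,1]$ under the step-size policies of interest) and $k=N$ to get
\[
\frac{V(x_{N+1},x)}{A_N}\le\frac{(1-a_s)V(x_s,x)}{A_s}+\sum_{k=s}^{N}\frac{B_k}{A_k}.
\]
Since the left-hand side is nonnegative, I drop it, split $\sum_{k=s}^{N}$ into sums over $\mathcal{B}$ and $\mathcal{N}$, substitute the two forms of $B_k$, and use $\rho_k=\gamma_k/A_k$; rearranging the $f$- and $g$-terms to the left then produces exactly the stated inequality.

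I expect the only real care to be bookkeeping: keeping straight which strong-convexity parameter ($\mu_f$ versus $\mu_g$, hence which value of $a_k$) attaches to which index set, correctly passing from $\|x-x_k\|^2$ to $V(x_k,x)$ through Assumption \ref{strongconvex_and_L}(ii), and recognizing that the contraction factor $(1-a_k)$ is precisely why one needs the weighted telescoping of Lemma \ref{triangle} instead of the naive summation used in the generally convex case. There is no deep obstacle here; the argument is a structural upgrade of the earlier recursion.
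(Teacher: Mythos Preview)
Your proposal is correct and follows essentially the same route as the paper: apply Lemma \ref{block}, use strong convexity of $f$ (respectively $g(\cdot,\delta_k)$) to produce the extra $\frac{\mu}{2}\|x-x_k\|^2$ term, convert that into $\frac{\mu\gamma_k}{L}V(x_k,x)$ via Assumption \ref{strongconvex_and_L}(ii), and then unroll the resulting contractive recursion with Lemma \ref{triangle} before dropping $V(x_{N+1},x)/A_N\ge 0$ and rearranging. The only minor addition in your write-up is the explicit remark that $a_k\in(0,1]$ is needed to invoke Lemma \ref{triangle}; the paper's proof leaves this implicit.
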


\begin{proof}
Consider an iteration $s\leq k\leq N$. If $k\in\mathcal{B}$, then
by Lemma \ref{block} and Assumption \ref{strongconvex_and_L}, we
have 
\begin{align*}
V(x_{k+1},x) & \leq V(x_{k},x)-\gamma_{k}\langle f'(x_{k}),x_{k}-x\rangle+\frac{1}{2}\gamma_{k}^{2}\|f'(x_{k})\|^{2}\\
 & \leq V(x_{k},x)-\gamma_{k}[f(x_{k})-f(x)+\frac{\mu_{f}}{2}\|x_{k}-x\|^{2}]+\frac{1}{2}\gamma_{k}^{2}\|f'(x_{k})\|^{2}\\
 & \leq(1-\frac{\mu_{f}\gamma_{k}}{L})V(x_{k},x)-\gamma_{k}[f(x_{k})-f(x)]+\frac{1}{2}\gamma_{k}^{2}\|f'(x_{k})\|^{2}.
\end{align*}
Similarly, for $k\in\mathcal{N}$, by Lemma \ref{block} and Assumption
\ref{strongconvex_and_L}, we have 
\begin{align*}
V(x_{k+1},x) & \leq V(x_{k},x)-\gamma_{k}\langle g'(x_{k},\delta_{k}),x_{k}-x\rangle+\frac{1}{2}\gamma_{k}^{2}\|g'(x_{k},\delta_{k})\|^{2}\\
 & \leq V(x_{k},x)-\gamma_{k}[g(x_{k},\delta_{k})-g(x,\delta_{k})+\frac{\mu_{g}}{2}\|x_{k}-x\|^{2}]+\frac{1}{2}\gamma_{k}^{2}\|g'(x_{k},\delta_{k})\|^{2}\\
 & \leq(1-\frac{\mu_{g}\gamma_{k}}{L})V(x_{k},x)-\gamma_{k}[\eta_{k}-g(x,\delta_{k})]+\frac{1}{2}\gamma_{k}^{2}\|g'(x_{k},\delta_{k})\|^{2}.
\end{align*}
Invoking Lemma \ref{triangle}, we then obtain
\begin{align*}
0\leq\frac{V(x_{N+1},x)}{A_{N}}\leq & \frac{(1-a_{s})V(x_{s},x)}{A_{s}}-\big[\sum_{k\in\mathcal{N}}\frac{\gamma_{k}}{A_{k}}(\eta_{k}-g(x,\delta_{k}))+\sum_{k\in\mathcal{B}}\frac{\gamma_{k}}{A_{k}}(f(x_{k})-f(x))\big]\\
 & +\frac{1}{2}\sum_{k\in\mathcal{B}}\frac{\gamma_{k}^{2}}{A_{k}}\|f'(x_{k})\|^{2}+\frac{1}{2}\sum_{k\in\mathcal{N}}\frac{\gamma_{k}^{2}}{A_{k}}\|g'(x_{k},\delta_{k})\|^{2}.
\end{align*}
Rearranging the terms in the above inequality and recalling the definition
of $\rho_{k}$, we arrive at (\ref{recursforstrong}). 
\end{proof}
The following result provides a sufficient condition for $\overline{x}_{N,s}$
to be well-defined. 
\begin{lem}
\label{Lemsuffstrong} Suppose Assumption \ref{strongconvex_and_L}
and the condition
\begin{eqnarray}
\min_{\left\{ \mathcal{A}\subset I:\left|\mathcal{A}\right|=\left\lceil (N-s+1)/2\right\rceil \right\} }\sum_{k\in\mathcal{A}}\rho_{k}\eta_{k}>\frac{(1-a_{s})D_{\mathcal{X}}^{2}}{A_{s}}+\frac{1}{2}\sum_{k\in\mathcal{B}}\rho_{k}\gamma_{k}L_{f}^{2}+\frac{1}{2}\sum_{k\in\mathcal{N}}\rho_{k}\gamma_{k}L_{g,\mathcal{X}}^{2}\label{suffstrong}
\end{eqnarray}
hold. Then, $\mathcal{B}\neq\emptyset$, i.e., $\overline{x}_{N,s}$
is well-defined. Furthermore, either (i) $|\mathcal{B}|\geq(N-s+1)/2$
or (ii) $\sum_{k\in\mathcal{B}}\rho_{k}(f(x_{k})-f(x^{*}))<0$ holds. 
\end{lem}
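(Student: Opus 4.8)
The plan is to transcribe the argument of Lemma \ref{lemforsuff} into the strongly convex setting, replacing the basic recursion (\ref{recursion}) by the weighted recursion (\ref{recursforstrong}) and the weights $\gamma_k$ by the weights $\rho_k$. First I would instantiate (\ref{recursforstrong}) at $x=x^{*}$. Since $x^{*}$ is feasible we have $g(x^{*},\delta_{k})\leq G(x^{*})\leq0$ for every $k$, so $\eta_{k}-g(x^{*},\delta_{k})\geq\eta_{k}\geq0$; bounding the right-hand side using $V(x_{s},x^{*})\leq D_{\mathcal{X}}^{2}$ together with $\|f'(x_{k})\|\leq L_{f}$ and $\|g'(x_{k},\delta_{k})\|\leq L_{g,\mathcal{X}}$ (Assumptions A2 and A4), this yields
\[
\sum_{k\in\mathcal{N}}\rho_{k}\eta_{k}+\sum_{k\in\mathcal{B}}\rho_{k}(f(x_{k})-f(x^{*}))\leq\frac{(1-a_{s})D_{\mathcal{X}}^{2}}{A_{s}}+\frac{1}{2}\sum_{k\in\mathcal{B}}\rho_{k}\gamma_{k}L_{f}^{2}+\frac{1}{2}\sum_{k\in\mathcal{N}}\rho_{k}\gamma_{k}L_{g,\mathcal{X}}^{2}=:R.
\]

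Next I would split according to the sign of $\sum_{k\in\mathcal{B}}\rho_{k}(f(x_{k})-f(x^{*}))$. If it is strictly negative, conclusion (ii) holds, and moreover $\mathcal{B}\neq\emptyset$ because an empty sum equals $0$, not a negative number. If instead $\sum_{k\in\mathcal{B}}\rho_{k}(f(x_{k})-f(x^{*}))\geq0$, then the displayed inequality gives $\sum_{k\in\mathcal{N}}\rho_{k}\eta_{k}\leq R$. I then argue $|\mathcal{B}|\geq(N-s+1)/2$ by contradiction: if $|\mathcal{B}|<(N-s+1)/2$ then $|\mathcal{N}|>(N-s+1)/2$, hence $|\mathcal{N}|\geq\lceil(N-s+1)/2\rceil$ since $|\mathcal{N}|$ is an integer, so $\mathcal{N}\subseteq I$ contains a subset $\mathcal{A}$ of size exactly $\lceil(N-s+1)/2\rceil$; because every $\rho_{k}\eta_{k}\geq0$,
\[
\sum_{k\in\mathcal{N}}\rho_{k}\eta_{k}\geq\sum_{k\in\mathcal{A}}\rho_{k}\eta_{k}\geq\min_{\{\mathcal{A}\subset I:\,|\mathcal{A}|=\lceil(N-s+1)/2\rceil\}}\sum_{k\in\mathcal{A}}\rho_{k}\eta_{k}>R
\]
by hypothesis (\ref{suffstrong}), a contradiction. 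Thus $|\mathcal{B}|\geq(N-s+1)/2\geq1/2$, so $|\mathcal{B}|\geq1$ and conclusion (i) holds. In either branch $\mathcal{B}\neq\emptyset$, so $\overline{x}_{N,s}$ is well-defined.

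The only place requiring care — and the step I would double-check — is the combinatorial bookkeeping: that $|\mathcal{N}|>(N-s+1)/2$ genuinely forces $|\mathcal{N}|\geq\lceil(N-s+1)/2\rceil$ (handling both parities of $N-s+1$), so that the ``minimum over size-$\lceil(N-s+1)/2\rceil$ subsets of $I$'' on the left of (\ref{suffstrong}) really lower-bounds $\sum_{k\in\mathcal{N}}\rho_{k}\eta_{k}$, using the nonnegativity of $\rho_{k}\eta_{k}$ to drop surplus indices. Everything else is a routine adaptation of the generally convex case. Note that this lemma only certifies well-definedness \emph{assuming} (\ref{suffstrong}); checking that (\ref{suffstrong}) actually holds under policy (\ref{stepsizeinstrongconvex}) is the separate content of Lemma \ref{nonemptyBstronglyconvex}.
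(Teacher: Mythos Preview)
Your proposal is correct and follows essentially the same argument as the paper: instantiate the weighted recursion (\ref{recursforstrong}) at $x=x^{*}$, use feasibility of $x^{*}$ and the Lipschitz bounds to reach $\sum_{k\in\mathcal{N}}\rho_{k}\eta_{k}\leq R$ in the case $\sum_{k\in\mathcal{B}}\rho_{k}(f(x_{k})-f(x^{*}))\geq0$, and then derive a contradiction with (\ref{suffstrong}) if $|\mathcal{N}|\geq\lceil(N-s+1)/2\rceil$. Your write-up is in fact slightly more careful than the paper's about the integer bookkeeping and about why $\mathcal{B}\neq\emptyset$ in the branch where (ii) holds.
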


\begin{proof}
By fixing $x=x^{*}$ in (\ref{recursforstrong}), we obtain

\[
\sum_{k\in\mathcal{N}}\rho_{k}(\eta_{k}-g(x^{*},\delta_{k}))+\sum_{k\in\mathcal{B}}\rho_{k}(f(x_{k})-f(x^{*}))\leq\frac{(1-a_{s})V(x_{s},x^{*})}{A_{s}}+\frac{1}{2}\sum_{k\in\mathcal{B}}\rho_{k}\gamma_{k}L_{f}^{2}+\frac{1}{2}\sum_{k\in\mathcal{N}}\rho_{k}\gamma_{k}L_{g,\mathcal{X}}^{2}.
\]
If $\sum_{k\in\mathcal{B}}\rho_{k}(f(x_{k})-f(x^{*}))\geq0$, noticing
$g(x^{*},\delta_{k})\leq G(x^{*})\leq0$, we have 
\begin{eqnarray}
\sum_{k\in\mathcal{N}}\rho_{k}\eta_{k}\leq\frac{(1-a_{s})V(x_{s},x^{*})}{A_{s}}+\frac{1}{2}\sum_{k\in\mathcal{B}}\rho_{k}\gamma_{k}L_{f}^{2}+\frac{1}{2}\sum_{k\in\mathcal{N}}\rho_{k}\gamma_{k}L_{g,\mathcal{X}}^{2}.\label{contr-1}
\end{eqnarray}
Suppose that $|\mathcal{B}|<(N-s+1)/2$, i.e., $|\mathcal{N}|\geq(N-s+1)/2$.
Then, by assumption we have
\[
\sum_{k\in\mathcal{N}}\rho_{k}\eta_{k}\geq\min_{\left\{ \mathcal{A}\subset I:\left|\mathcal{A}\right|=\left\lceil (N-s+1)/2\right\rceil \right\} }\sum_{k\in\mathcal{A}}\rho_{k}\eta_{k}>\frac{(1-a_{s})D_{\mathcal{X}}^{2}}{A_{s}}+\frac{1}{2}\sum_{k\in\mathcal{B}}\rho_{k}\gamma_{k}L_{f}^{2}+\frac{1}{2}\sum_{k\in\mathcal{N}}\rho_{k}\gamma_{k}L_{g,\mathcal{X}}^{2},
\]
which contradicts (\ref{contr-1}). Thus, condition (i) holds. Alternatively,
if $\sum_{k\in\mathcal{B}}\rho_{k}(f(x_{k})-f(x^{*}))<0$ then condition
(ii) holds. 
\end{proof}
Based on the above lemma, we may now prove Lemma \ref{nonemptyBstronglyconvex}. 
\begin{proof}[Proof of Lemma \ref{nonemptyBstronglyconvex}]
 From the selections of $\{\eta_{k}\}_{k\geq1}$, $\{\gamma_{k}\}_{k\geq1}$,
and $s$ in (\ref{stepsizeinstrongconvex}), we have for $k\in\{1,2,\ldots,N\}$,
$a_{k}=\frac{2}{k+1}$, $A_{k}=\frac{2}{(k+1)k}$, and 

\[
\rho_{k}=\frac{\gamma_{k}}{A_{k}}=\left\{ \begin{array}{l}
\frac{Lk}{\mu_{f}},\quad k\in\mathcal{B},\\
\frac{Lk}{\mu_{g}},\quad k\in\mathcal{N}.
\end{array}\right.
\]
Specifically, $a_{s}=a_{1}=1$, and 
\[
\frac{1}{2}\rho_{k}\gamma_{k}=\frac{\gamma_{k}^{2}}{2A_{k}}=\left\{ \begin{array}{l}
\frac{L^{2}}{\mu_{f}^{2}}\frac{k}{k+1}\leq\frac{L^{2}}{\mu_{f}^{2}},\quad k\in\mathcal{B},\\
\frac{L^{2}}{\mu_{g}^{2}}\frac{k}{k+1}\leq\frac{L^{2}}{\mu_{g}^{2}},\quad k\in\mathcal{N},
\end{array}\right.
\]
which implies that 
\begin{equation}
\frac{(1-a_{s})D_{\mathcal{X}}^{2}}{A_{s}}+\frac{1}{2}\sum_{k\in\mathcal{B}}\rho_{k}\gamma_{k}L_{f}^{2}+\frac{1}{2}\sum_{k\in\mathcal{N}}\rho_{k}\gamma_{k}L_{g,\mathcal{X}}^{2}\leq L^{2}(|\mathcal{B}|\frac{L_{f}^{2}}{\mu_{f}^{2}}+|\mathcal{N}|\frac{L_{g,\mathcal{X}}^{2}}{\mu_{g}^{2}})\leq NL^{2}\max\left\{ \frac{L_{f}^{2}}{\mu_{f}^{2}},\frac{L_{g,\mathcal{X}}^{2}}{\mu_{g}^{2}}\right\} .\label{usedlaterstronglyconvex}
\end{equation}
Also, we have

\[
\begin{array}{rcl}
\min_{\left\{ \mathcal{A}\subset I:\left|\mathcal{A}\right|=\left\lceil (N-s+1)/2\right\rceil \right\} }\sum_{k\in\mathcal{A}}\rho_{k}\eta_{k} & \geq & \sum_{k=1}^{\left\lceil N/2\right\rceil }\frac{Lk}{\max\left\{ \mu_{f},\mu_{g}\right\} }.\frac{8L}{N}\max\left\{ \mu_{f},\mu_{g}\right\} \max\left\{ \frac{L_{f}^{2}}{\mu_{f}^{2}},\frac{L_{g,\mathcal{X}}^{2}}{\mu_{g}^{2}}\right\} \\
 & \geq & (N+1)L^{2}\max\left\{ \frac{L_{f}^{2}}{\mu_{f}^{2}},\frac{L_{g,\mathcal{X}}^{2}}{\mu_{g}^{2}}\right\} .
\end{array}
\]
By Lemma \ref{Lemsuffstrong}, we have $\mathcal{B}\neq\emptyset$,
i.e., $\overline{x}_{N,s}$ is well-defined. 
\end{proof}
Before we prove Theorem \ref{strongconvexresult}, we establish the
main convergence properties of Algorithm \ref{CSAinSIP} in the following
proposition.
\begin{prop}
\label{convergstrong} Suppose Assumption \ref{strongconvex_and_L}
holds, and suppose that $\{\gamma_{k}\}_{k\geq1}$ and $\{\eta_{k}\}_{k\geq1}$
are chosen such that (\ref{suffstrong}) holds. Let $\left\{ x_{k}\right\} _{k\geq1}$
be generated according to Algorithm \ref{CSAinSIP}. Then, for any
$1\leq s\leq N$ we have
\begin{eqnarray}
f(\overline{x}_{N,s})-f(x^{*})\leq\frac{2(1-a_{s})D_{\mathcal{X}}^{2}/A_{s}+\sum_{k\in\mathcal{B}}\rho_{k}\gamma_{k}L_{f}^{2}+\sum_{k\in\mathcal{N}}\rho_{k}\gamma_{k}L_{g,\mathcal{X}}^{2}}{2\min_{\left\{ \mathcal{A}\subset I:\left|\mathcal{A}\right|=\left\lceil (N-s+1)/2\right\rceil \right\} }\sum_{k\in\mathcal{A}}\rho_{k}},\label{convgstr1}
\end{eqnarray}
and
\begin{eqnarray}
G(\overline{x}_{N,s})\leq\frac{\sum_{k\in\mathcal{B}}\rho_{k}(\eta_{k}+\varepsilon_{k})}{\sum_{k\in\mathcal{B}}\rho_{k}}.\label{convgstr2}
\end{eqnarray}
\end{prop}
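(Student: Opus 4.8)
The plan is to mirror the proof of Proposition \ref{converg}, replacing the basic recursion (\ref{recursion}) by its strongly convex counterpart (\ref{recursforstrong}) and Lemma \ref{lemforsuff} by Lemma \ref{Lemsuffstrong}, with $\rho_k$ playing the role of $\gamma_k$ and $\rho_k\gamma_k$ the role of $\gamma_k^2$. First I would establish (\ref{convgstr1}). By Lemma \ref{Lemsuffstrong} we are in one of two cases. If case (ii) holds, i.e. $\sum_{k\in\mathcal{B}}\rho_k(f(x_k)-f(x^*))<0$, then convexity of $f$ and the definition $\overline{x}_{N,s}=(\sum_{k\in\mathcal{B}}\rho_k x_k)/(\sum_{k\in\mathcal{B}}\rho_k)$ give $f(\overline{x}_{N,s})-f(x^*)\le(\sum_{k\in\mathcal{B}}\rho_k(f(x_k)-f(x^*)))/(\sum_{k\in\mathcal{B}}\rho_k)<0$, and since every term in the right-hand side of (\ref{convgstr1}) is nonnegative (note $a_s\in(0,1]$ forces $1-a_s\ge0$, and $A_s,\rho_k,\gamma_k>0$), the claimed bound holds trivially.

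In case (i), $|\mathcal{B}|\ge(N-s+1)/2$, so since $|\mathcal{B}|$ is an integer we have $|\mathcal{B}|\ge\lceil(N-s+1)/2\rceil$; as $\mathcal{B}\subset I$ we may extract a subset of $\mathcal{B}$ of that cardinality, and positivity of the $\rho_k$ then yields $\sum_{k\in\mathcal{B}}\rho_k\ge\min_{\{\mathcal{A}\subset I:|\mathcal{A}|=\lceil(N-s+1)/2\rceil\}}\sum_{k\in\mathcal{A}}\rho_k$. Next I would set $x=x^*$ in (\ref{recursforstrong}); since $g(x^*,\delta_k)\le G(x^*)\le0$ the term $\sum_{k\in\mathcal{N}}\rho_k(\eta_k-g(x^*,\delta_k))\ge\sum_{k\in\mathcal{N}}\rho_k\eta_k\ge0$ can be dropped, and after bounding $\|f'(x_k)\|\le L_f$, $\|g'(x_k,\delta_k)\|\le L_{g,\mathcal{X}}$, and $V(x_s,x^*)\le D_{\mathcal{X}}^2$, one obtains $\sum_{k\in\mathcal{B}}\rho_k(f(x_k)-f(x^*))\le(1-a_s)D_{\mathcal{X}}^2/A_s+\tfrac12\sum_{k\in\mathcal{B}}\rho_k\gamma_k L_f^2+\tfrac12\sum_{k\in\mathcal{N}}\rho_k\gamma_k L_{g,\mathcal{X}}^2$. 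Applying convexity of $f$ to the left side, dividing by $\sum_{k\in\mathcal{B}}\rho_k$ (bounded below as above), and clearing the factor $1/2$ gives exactly (\ref{convgstr1}).

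For (\ref{convgstr2}) the argument is identical to the one in Proposition \ref{converg}: for each $k\in\mathcal{B}$ we have $g(x_k,\delta_k)\le\eta_k$, hence $G(x_k)=g(x_k,\delta_k)+\varepsilon_k\le\eta_k+\varepsilon_k$; then convexity of $G$ and the definition of $\overline{x}_{N,s}$ yield $G(\overline{x}_{N,s})\le(\sum_{k\in\mathcal{B}}\rho_k G(x_k))/(\sum_{k\in\mathcal{B}}\rho_k)\le(\sum_{k\in\mathcal{B}}\rho_k(\eta_k+\varepsilon_k))/(\sum_{k\in\mathcal{B}}\rho_k)$.

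The only mildly delicate point — hence the main obstacle — is the bookkeeping around the combinatorial minimum: converting the hypothesis $|\mathcal{B}|\ge(N-s+1)/2$ into the lower bound $\sum_{k\in\mathcal{B}}\rho_k\ge\min_{\{\mathcal{A}\subset I:|\mathcal{A}|=\lceil(N-s+1)/2\rceil\}}\sum_{k\in\mathcal{A}}\rho_k$, together with checking that the right-hand side of (\ref{convgstr1}) is manifestly nonnegative so that case (ii) needs no further work. Everything else is a direct transcription of the general convex proof.
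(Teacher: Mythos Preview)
Your proposal is correct and follows essentially the same approach as the paper's proof: the same two-case split via Lemma \ref{Lemsuffstrong}, the same specialization of (\ref{recursforstrong}) at $x=x^{*}$ with the Lipschitz and diameter bounds in case (i), and the identical argument for (\ref{convgstr2}). Your extra care in justifying $\sum_{k\in\mathcal{B}}\rho_k\ge\min_{\{\mathcal{A}\subset I:|\mathcal{A}|=\lceil(N-s+1)/2\rceil\}}\sum_{k\in\mathcal{A}}\rho_k$ and the nonnegativity of the right-hand side of (\ref{convgstr1}) only makes explicit what the paper leaves implicit.
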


\begin{proof}
We first show that (\ref{convgstr1}) holds. By Lemma \ref{Lemsuffstrong},
we have two cases. If $\sum_{k\in\mathcal{B}}\rho_{k}(f(x_{k})-f(x^{*}))<0$
holds, using the convexity of $f$ and the definition of $\overline{x}_{N,s}$,
we obtain $f(\overline{x}_{N,s})-f(x^{*})<0$ which implies (\ref{convgstr1}).
If $|\mathcal{B}|\geq(N-s+1)/2$, then we have $\sum_{k\in\mathcal{B}}\rho_{k}\geq\min_{\left\{ \mathcal{A}\subset I:\left|\mathcal{A}\right|=\left\lceil (N-s+1)/2\right\rceil \right\} }\sum_{k\in\mathcal{A}}\rho_{k}$.
Take $x=x^{*}$ in (\ref{recursforstrong}), from Assumptions\textbf{\textcolor{black}{{}
A2}}, \textbf{\textcolor{black}{A4}}, the definition of $\overline{x}_{N,s}$,
and the fact that $g(x^{*},\delta_{k})\leq G(x^{*})\leq0$, we have

\[
\begin{array}{rcl}
\sum_{k\in\mathcal{N}}\rho_{k}\eta_{k}+\sum_{k\in\mathcal{B}}\rho_{k}[f(\overline{x}_{N,s})-f(x^{*})] & \leq & \sum_{k\in\mathcal{N}}\rho_{k}\eta_{k}+\sum_{k\in\mathcal{B}}[\rho_{k}(f(x_{k})-f(x^{*}))]\\
 & \leq & (1-a_{s})D_{\mathcal{X}}^{2}/A_{s}+\frac{1}{2}\sum_{k\in\mathcal{B}}\rho_{k}\gamma_{k}L_{f}^{2}+\frac{1}{2}\sum_{k\in\mathcal{N}}\rho_{k}\gamma_{k}L_{g,\mathcal{X}}^{2}.
\end{array}
\]
Noticing $\sum_{k\in\mathcal{N}}\rho_{k}\eta_{k}\geq0$, it follows
that (\ref{convgstr1}) holds.

Next we prove (\ref{convgstr2}). For any $k\in\mathcal{B}$, we have
$g(x_{k},\delta_{k})\leq\eta_{k}$ by definition. Then, for any $k\in\mathcal{B}$
we must have $G(x_{k})\leq\eta_{k}+\varepsilon_{k}$. From the definition
of $\overline{x}_{N,s}$, and the convexity of $G$, we obtain
\[
G(\overline{x}_{N,s})\leq\frac{\sum_{k\in\mathcal{B}}\rho_{k}G(x_{k})}{\sum_{k\in\mathcal{B}}\rho_{k}}\leq\frac{\sum_{k\in\mathcal{B}}\rho_{k}(\eta_{k}+\varepsilon_{k})}{\sum_{k\in\mathcal{B}}\rho_{k}}.
\]
\end{proof}
We now have the machinery in place to prove the error bound for inexact
CSA in the strongly convex case. 
\begin{proof}[Proof of Theorem \ref{strongconvexresult}]
 We bound the optimality gap by (\ref{convgstr1}) as follows. Recall
(\ref{usedlaterstronglyconvex}), we have 

\[
2(1-a_{s})D_{\mathcal{X}}^{2}/A_{s}+\sum_{k\in\mathcal{B}}\rho_{k}\gamma_{k}L_{f}^{2}+\sum_{k\in\mathcal{N}}\rho_{k}\gamma_{k}L_{g,\mathcal{X}}^{2}\leq2NL^{2}\max\left\{ \frac{L_{f}^{2}}{\mu_{f}^{2}},\frac{L_{g,\mathcal{X}}^{2}}{\mu_{g}^{2}}\right\} .
\]
Further, we have $\min_{\left\{ \mathcal{A}\subset I:\left|\mathcal{A}\right|=\left\lceil (N-s+1)/2\right\rceil \right\} }\sum_{k\in\mathcal{A}}\rho_{k}\geq\sum_{k=1}^{\left\lceil N/2\right\rceil }\frac{Lk}{\max\left\{ \mu_{f},\mu_{g}\right\} }\geq\frac{LN(N+1)}{8\max\left\{ \mu_{f},\mu_{g}\right\} }$.
It then follows from (\ref{convgstr1}) that 
\begin{align*}
f(\overline{x}_{N,s})-f(x^{*})\leq\frac{8L}{N+1}\max\left\{ \mu_{f},\mu_{g}\right\} \max\left\{ \frac{L_{f}^{2}}{\mu_{f}^{2}},\frac{L_{g,\mathcal{X}}^{2}}{\mu_{g}^{2}}\right\} .
\end{align*}
Next, we bound the constraint violation by (\ref{convgstr2}). Noticing
that $\eta_{k}=\frac{8L}{N}\max\left\{ \mu_{f},\mu_{g}\right\} \max\left\{ \frac{L_{f}^{2}}{\mu_{f}^{2}},\frac{L_{g,\mathcal{X}}^{2}}{\mu_{g}^{2}}\right\} $
is a constant, it immediately follows from (\ref{convgstr2}) that
\[
G(\overline{x}_{N,s})\leq\frac{8L}{N}\max\left\{ \mu_{f},\mu_{g}\right\} \max\left\{ \frac{L_{f}^{2}}{\mu_{f}^{2}},\frac{L_{g,\mathcal{X}}^{2}}{\mu_{g}^{2}}\right\} +\frac{\sum_{k\in\mathcal{B}}k\,\varepsilon_{k}}{\sum_{k\in\mathcal{B}}k}.
\]
\end{proof}

\subsection{\label{subsec:Fixed-Sampling} CSA with Fixed Sampling}

In this subsection we develop the proofs for our fixed sampling scheme.
At each iteration $k$, $x_{k}$ is fixed, and we face the cut generation
Problem (\ref{cut}) which can be written in epigraph form (where
the index $k$ is omitted): 
\begin{eqnarray}
\min_{y\in\mathbb{R}}\Big\{ y:y\geq g(x,\delta),\forall\delta\in\Delta\Big\}.\label{opti}
\end{eqnarray}

We repeat the definition of uniform level-set bound (ULB) from \cite{esfahani2015performance}
as follows.
\begin{defn}
\cite[Definition 3.1]{esfahani2015performance}\label{ULB} For fixed
$x\in\mathcal{X}$, the tail probability of the worst-case violation
is the function $p:\mathbb{R}_{+}\rightarrow[0,1]$ defined by $p(\alpha):=Q\{\delta\in\Delta:g(x,\delta)>G(x)-\alpha\}$.
We call $h:[0,1]\rightarrow\mathbb{R}_{+}$ a uniform level-set bound
(ULB) of $p$ if for all $\varepsilon\in[0,1]$, $h(\varepsilon)\geq\sup\{\kappa\in\mathbb{R}_{+}:p(\kappa)\leq\varepsilon\}$.
\end{defn}

Let $\delta^{(1)},\delta^{(2)},\ldots,\delta^{(M)}$ be i.i.d. samples
generated according to a probability distribution $Q$. The sampled
problem derived from Problem (\ref{opti}) is 
\begin{eqnarray}
\min_{y\in\mathbb{R}}\Big\{ y:y\geq g(x,\delta^{(i)}),\forall i=1,2,\ldots,M\Big\}\label{sampled}
\end{eqnarray}
which is equivalent to $\max_{i=1,2,\ldots,M}g(x,\delta^{(i)})$.

Let $\widehat{g}_{M}(x)$ be the unique solution of Problem (\ref{sampled}).
This optimal solution $\widehat{g}_{M}(x)$ is a random variable that
depends on the samples $\delta^{(1)},\delta^{(2)},\ldots,\delta^{(M)}$.
As a direct application of Theorem 3.6 in \cite{esfahani2015performance},
we have the following key result. 
\begin{prop}
\label{keyresult} Consider the Problems (\ref{opti}) and (\ref{sampled})
for fixed $x\in\mathcal{X}$ with the associated optimal values $G(x)$
and $\widehat{g}_{M}(x)$, respectively. Given a ULB $h$ and $\varepsilon$,
$\beta$ in $[0,1]$, for all $M\geq M(\varepsilon,\beta)$, we have
$Q^{M}\{G(x)-\widehat{g}_{M}(x)\in[0,h(\varepsilon)]\}\geq1-\beta$.
\end{prop}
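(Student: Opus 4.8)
The plan is to reduce Proposition~\ref{keyresult} directly to Theorem~3.6 of \cite{esfahani2015performance} by checking that all the hypotheses of that theorem are met in our setting, and then translating its conclusion into the language of Problems (\ref{opti}) and (\ref{sampled}). First I would fix $x \in \mathcal{X}$ and recall that, by Assumption~\textbf{A4}, the map $\delta \mapsto g(x,\delta)$ is bounded on the compact set $\Delta$, so the epigraph problem (\ref{opti}) has a well-defined finite optimal value $G(x) = \max_{\delta\in\Delta} g(x,\delta)$, and likewise the sampled problem (\ref{sampled}) has the finite optimal value $\widehat{g}_M(x) = \max_{1\le i\le M} g(x,\delta^{(i)})$. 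The feasible region of (\ref{opti}) is the set $\{y : y \ge G(x)\}$ and that of (\ref{sampled}) is $\{y : y \ge \widehat{g}_M(x)\}$; since any feasible point of (\ref{opti}) is feasible for (\ref{sampled}), we always have $\widehat{g}_M(x) \le G(x)$, i.e. the gap $G(x) - \widehat{g}_M(x)$ is nonnegative. This gives the lower endpoint $0$ of the interval in the claimed bound for free.

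Next I would verify the structural requirements that Theorem~3.6 of \cite{esfahani2015performance} imposes on a randomized program of this type: the problem (\ref{opti}) is a convex (indeed linear) program in the scalar variable $y$; the constraints are indexed by $\delta \in \Delta$ with $Q \in \mathcal{P}(\Delta)$ the sampling distribution; and, crucially, a ULB $h$ for the tail probability $p(\alpha) = Q\{\delta \in \Delta : g(x,\delta) > G(x) - \alpha\}$ is assumed to be available (this is exactly Definition~\ref{ULB}, which we have imported verbatim from \cite{esfahani2015performance}). With these identifications in place, Theorem~3.6 asserts that for any $\varepsilon, \beta \in [0,1]$ and any sample size $M \ge M(\varepsilon,\beta) = \lceil \ln\beta / \ln(1-\varepsilon) \rceil$, the optimal value $\widehat{g}_M(x)$ of the sampled program satisfies
\[
Q^M\big\{ G(x) - \widehat{g}_M(x) \le h(\varepsilon) \big\} \ge 1 - \beta.
\]
Combining this one-sided high-probability bound with the deterministic inequality $G(x) - \widehat{g}_M(x) \ge 0$ established in the first step yields $Q^M\{ G(x) - \widehat{g}_M(x) \in [0, h(\varepsilon)] \} \ge 1-\beta$, which is precisely the claim.

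The only genuine content beyond bookkeeping is making sure the hypotheses of \cite[Theorem~3.6]{esfahani2015performance} really do hold here — in particular that our cut generation problem, once written in the epigraph form (\ref{opti}), fits the template of a randomized optimization problem to which that theorem applies, and that the quantity $M(\varepsilon,\beta)$ we use matches the sample-size threshold in their statement. I expect this matching to be the main (and only) obstacle, and it is a matter of careful translation of notation rather than of new argument: the scalar decision variable $y$ plays the role of the decision vector, the constraint $y \ge g(x,\delta)$ plays the role of the $\delta$-th constraint, $Q$ is the distribution over the uncertainty set, and the ULB $h$ supplies the level-set bound their theorem requires. Once this dictionary is fixed, the proof is immediate, so I would keep it short and simply cite Theorem~3.6 of \cite{esfahani2015performance} for the probabilistic estimate.
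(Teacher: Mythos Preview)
Your proposal is correct and matches the paper's approach exactly: the paper presents Proposition~\ref{keyresult} as ``a direct application of Theorem~3.6 in \cite{esfahani2015performance}'' with no further proof, and your plan simply spells out the dictionary needed to invoke that theorem together with the trivial observation that $\widehat{g}_M(x)\le G(x)$.
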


From Proposition \ref{keyresult}, we see that for fixed $x\in\mathcal{X}$
the gap between $\widehat{g}_{M}(x)$ and $G(x)$ is effectively quantified
by a ULB $h(\varepsilon)$. To control the behavior of $h(\varepsilon)$
as $\varepsilon\rightarrow0$, we require more structure on the probability
distribution $Q$ on $\Delta$, which is imposed in Assumption \ref{strictlyincr}.
The next result is based on Assumption \ref{strictlyincr}.
\begin{prop}
\cite[Proposition 3.8]{esfahani2015performance}\label{ULBexample}
Under Assumption \ref{strictlyincr}, the function $h(\varepsilon):=L_{g,\Delta}\varphi^{-1}(\varepsilon)$
is a ULB, where $\varphi^{-1}$ is the inverse of $\varphi$. 
\end{prop}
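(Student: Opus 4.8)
\textbf{Proof proposal for Proposition~\ref{ULBexample}.}

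The plan is to verify directly that $h(\varepsilon) := L_{g,\Delta}\,\varphi^{-1}(\varepsilon)$ satisfies the defining inequality of a ULB from Definition~\ref{ULB}, namely that $h(\varepsilon) \geq \sup\{\kappa \in \mathbb{R}_+ : p(\kappa) \leq \varepsilon\}$ for every $\varepsilon \in [0,1]$, where $p(\alpha) = Q\{\delta \in \Delta : g(x,\delta) > G(x) - \alpha\}$ for the fixed $x \in \mathcal{X}$. The key geometric observation is that near a maximizer $\delta^* \in \arg\max_{\delta \in \Delta} g(x,\delta)$, the Lipschitz continuity of $\delta \mapsto g(x,\delta)$ (Assumption \textbf{A5}, constant $L_{g,\Delta}$) forces a whole ball around $\delta^*$ to be contained in the superlevel set $\{\delta : g(x,\delta) > G(x) - \alpha\}$, and then Assumption~\ref{strictlyincr} gives a lower bound on the $Q$-measure of that ball in terms of $\varphi$.

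First I would fix $\alpha > 0$ and pick $\delta^* \in \Delta$ with $g(x,\delta^*) = G(x)$ (exists by compactness of $\Delta$ and continuity). For any $\delta$ with $\|\delta - \delta^*\| < \alpha / L_{g,\Delta}$, Lipschitz continuity gives $g(x,\delta) \geq g(x,\delta^*) - L_{g,\Delta}\|\delta - \delta^*\| > G(x) - \alpha$, so the open ball $B_{\alpha/L_{g,\Delta}}(\delta^*)$ lies inside the superlevel set. Hence $p(\alpha) \geq Q\{B_{\alpha/L_{g,\Delta}}(\delta^*)\} \geq \varphi(\alpha / L_{g,\Delta})$ by Assumption~\ref{strictlyincr}. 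Now suppose $\kappa \in \mathbb{R}_+$ satisfies $p(\kappa) \leq \varepsilon$. Combining with the bound just derived, $\varphi(\kappa / L_{g,\Delta}) \leq p(\kappa) \leq \varepsilon$; since $\varphi$ is strictly increasing it has an inverse $\varphi^{-1}$ on its range, and applying $\varphi^{-1}$ (which is also increasing) yields $\kappa / L_{g,\Delta} \leq \varphi^{-1}(\varepsilon)$, i.e. $\kappa \leq L_{g,\Delta}\varphi^{-1}(\varepsilon) = h(\varepsilon)$. Taking the supremum over all such $\kappa$ gives $\sup\{\kappa : p(\kappa) \leq \varepsilon\} \leq h(\varepsilon)$, which is exactly the ULB condition. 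The edge cases $\varepsilon = 0$ and the case where no $\kappa$ satisfies $p(\kappa) \leq \varepsilon$ (empty sup, conventionally $-\infty$ or $0$) are handled trivially since $h(\varepsilon) \geq 0$.

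I expect the main subtlety — rather than a genuine obstacle — to be bookkeeping about open versus closed balls and the exact domain/range of $\varphi$ and $\varphi^{-1}$: Assumption~\ref{strictlyincr} is stated for open balls $B_r(\delta) \subset \Delta$, so one must ensure the ball $B_{\alpha/L_{g,\Delta}}(\delta^*)$ used is genuinely contained in $\Delta$ (for small $\alpha$ this is automatic if $\delta^*$ is interior; if $\delta^*$ is on the boundary one intersects with $\Delta$ and uses that $Q$ is supported on $\Delta$, or simply shrinks the radius), and that $\varphi^{-1}$ is well-defined on $[0,1]$ or at least on the relevant subinterval. Since this is essentially a restatement of \cite[Proposition~3.8]{esfahani2015performance}, I would cite that source for the detailed argument and only sketch the Lipschitz-plus-measure-lower-bound mechanism above.
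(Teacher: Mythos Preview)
The paper does not prove this proposition at all; it simply cites \cite[Proposition~3.8]{esfahani2015performance} and moves on. Your proposed argument is correct and is precisely the standard proof one expects: the Lipschitz bound forces a ball of radius $\alpha/L_{g,\Delta}$ around a maximizer into the superlevel set, Assumption~\ref{strictlyincr} lower-bounds the $Q$-mass of that ball by $\varphi(\alpha/L_{g,\Delta})$, and inverting $\varphi$ yields the ULB inequality.

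Your flagged subtlety about the containment $B_{\alpha/L_{g,\Delta}}(\delta^*)\subset\Delta$ is real, since Assumption~\ref{strictlyincr} is stated only for balls lying inside $\Delta$ and a maximizer $\delta^*$ may sit on the boundary. This is a wrinkle in how the assumption is phrased here rather than a flaw in your reasoning; in the cited source the assumption is calibrated so that the bound applies regardless, and in any case one can simply defer to \cite{esfahani2015performance} for the details, as the present paper does.
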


From Propositions \ref{keyresult} and \ref{ULBexample}, we obtain
the following bound in probability. 
\begin{prop}
\label{keyrandcut} Suppose Assumption \ref{strictlyincr} holds.
Given $\epsilon>0$ and $\beta\in(0,1)$, for $M\geq M(\varphi(\frac{\epsilon}{L_{g,\Delta}}),\beta)$
i.i.d. samples from $Q$, we have $Q^{M}\{G(x)-\max_{1\leq i\leq M}g(x,\delta^{(i)})\leq\epsilon\}\geq1-\beta$.
\end{prop}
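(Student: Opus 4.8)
The plan is to combine the probabilistic guarantee from Proposition \ref{keyresult} with the explicit ULB furnished by Proposition \ref{ULBexample}, and then tune the argument of the ULB so that the level-set width equals the prescribed tolerance $\epsilon$. First I would observe that Proposition \ref{ULBexample} gives, under Assumption \ref{strictlyincr}, the concrete ULB $h(\varepsilon) = L_{g,\Delta}\,\varphi^{-1}(\varepsilon)$. Applying Proposition \ref{keyresult} with this $h$ and with the confidence parameter $\beta$, we get that for every $M \geq M(\varepsilon,\beta)$,
\[
Q^{M}\bigl\{\,G(x) - \widehat g_{M}(x) \in [0, L_{g,\Delta}\,\varphi^{-1}(\varepsilon)]\,\bigr\} \geq 1 - \beta.
\]

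Next I would make the substitution that converts the generic level $\varepsilon \in [0,1]$ into the desired cut-generation tolerance. Setting $\varepsilon := \varphi\!\left(\tfrac{\epsilon}{L_{g,\Delta}}\right)$ (which lies in $[0,1]$ since $\varphi$ maps into $[0,1]$), the width of the level set becomes $L_{g,\Delta}\,\varphi^{-1}\!\bigl(\varphi(\epsilon/L_{g,\Delta})\bigr) = \epsilon$ — here one uses that $\varphi$ is strictly increasing, hence invertible on its range, so $\varphi^{-1}\circ\varphi$ is the identity. With this choice the sample-size threshold $M(\varepsilon,\beta)$ becomes exactly $M\!\bigl(\varphi(\epsilon/L_{g,\Delta}),\beta\bigr)$, matching the statement. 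Finally I would recall that $\widehat g_{M}(x)$, the optimal value of the sampled problem (\ref{sampled}), equals $\max_{1\leq i\leq M} g(x,\delta^{(i)})$, so the event $\{G(x)-\widehat g_{M}(x)\in[0,\epsilon]\}$ is precisely the event $\{G(x) - \max_{1\leq i\leq M} g(x,\delta^{(i)}) \leq \epsilon\}$ (the lower bound $0$ is automatic since $g(x,\delta^{(i)})\leq G(x)$ for all $i$). This yields the claimed probability bound.

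The argument is essentially a bookkeeping exercise once Propositions \ref{keyresult} and \ref{ULBexample} are in hand; the only point requiring a small amount of care is the composition $\varphi^{-1}\circ\varphi = \mathrm{id}$, which needs strict monotonicity of $\varphi$ (so that it is a bijection onto its image) together with the fact that $\epsilon/L_{g,\Delta}$ is in the domain of $\varphi$ — I would note that Assumption \ref{strictlyincr} supplies exactly this, and that for balls $B_{r}(\delta)$ not contained in $\Delta$ one may restrict to $r$ small enough, which is harmless since we only need the tolerance $\epsilon$ to be achievable. I do not anticipate any genuine obstacle; the main thing to state cleanly is the translation between the abstract level parameter $\varepsilon$ in the ULB machinery and the problem-specific tolerance $\epsilon$.
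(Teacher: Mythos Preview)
Your proposal is correct and follows exactly the route the paper takes: the paper simply states that the result follows from Propositions~\ref{keyresult} and~\ref{ULBexample}, and you have spelled out the substitution $\varepsilon = \varphi(\epsilon/L_{g,\Delta})$ and the identification $\widehat g_M(x)=\max_i g(x,\delta^{(i)})$ that make this immediate.
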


Now we can estimate the empirical constraint violation for the fixed
sampling scheme. 
\begin{proof}[Proof of Proposition \ref{keyrandcut-inexpectation}]
 From Proposition \ref{keyrandcut}, we have 

\[
Q^{M}\left\{ G(x)-\max_{1\leq i\leq M}g(x,\delta^{(i)})\in[0,\frac{\epsilon}{2}]\right\} \geq1-\frac{\epsilon}{2(\overline{M}-\underline{M})}.
\]
Therefore, we have 

\[
\begin{array}{rcl}
\mathbb{E}_{Q^{M}}\left[\max_{1\leq i\leq M}g(x,\delta^{(i)})\right] & \geq & (G(x)-\frac{\epsilon}{2})(1-\frac{\epsilon}{2(\overline{M}-\underline{M})})+\underline{M}\frac{\epsilon}{2(\overline{M}-\underline{M})}\\
 & \geq & G(x)-\frac{\epsilon}{2}-\frac{\epsilon(G(x)-\underline{M})}{2(\overline{M}-\underline{M})}\\
 & \geq & G(x)-\epsilon.
\end{array}
\]
\end{proof}
Next, we give the proof for Theorem \ref{randvaristep} (for the generally
convex case under the fixed sampling scheme). The proof uses Proposition
\ref{keyrandcut-inexpectation} to control the error terms in our
general inexact CSA analysis.
\begin{proof}[Proof of Theorem \ref{randvaristep}]
 From Proposition \ref{keyrandcut-inexpectation}, we have $\mathbb{E}_{Q^{M_{k}}}\left[\varepsilon_{k}\right]\leq\frac{(L_{f}+L_{g,\mathcal{X}})D_{\mathcal{X}}}{\sqrt{k}}$
for all $k\geq1$. For $k\in\mathcal{B}$ with $k>\frac{N}{2}$, we
have $\mathbb{E}_{Q^{M_{k}}}\left[\frac{\varepsilon_{k}}{\sqrt{k}}\right]\leq\frac{2D_{\mathcal{X}}(L_{f}+L_{g,\mathcal{X}})}{N}$.
Moreover, $\frac{1}{\sqrt{k}}\geq\frac{1}{\sqrt{N}}$. Thus, from
independence of samples, we have

\[
\mathbb{E}_{\mathcal{Q}}\left[\frac{\sum_{k\in\mathcal{B}}\varepsilon_{k}/\sqrt{k}}{\sum_{k\in\mathcal{B}}1/\sqrt{k}}\right]\leq\frac{2D_{\mathcal{X}}(L_{f}+L_{g,\mathcal{X}})}{\sqrt{N}}.
\]
Subsequently, Theorem \ref{varistepresult} gives $\mathbb{E}_{\mathcal{Q}}\left[G(\overline{x}_{N,s})\right]\leq14D_{\mathcal{X}}(L_{f}+L_{g,\mathcal{X}})/\sqrt{N}$.
\end{proof}
The proof of Theorem \ref{randstrong} (for the strongly convex case)
is as follows. 
\begin{proof}[Proof of Theorem \ref{randstrong}]
 From Proposition \ref{keyrandcut-inexpectation}, we have $\mathbb{E}_{Q^{M_{k}}}\left[\varepsilon_{k}\right]\leq\frac{L}{N}\max\left\{ \mu_{f},\mu_{g}\right\} \max\left\{ \frac{L_{f}^{2}}{\mu_{f}^{2}},\frac{L_{g,\mathcal{X}}^{2}}{\mu_{g}^{2}}\right\} $.
It follows that

\[
\mathbb{E}_{\mathcal{Q}}\left[\frac{\sum_{k\in\mathcal{B}}k\,\varepsilon_{k}}{\sum_{k\in\mathcal{B}}k}\right]\leq\frac{L}{N}\max\left\{ \mu_{f},\mu_{g}\right\} \max\left\{ \frac{L_{f}^{2}}{\mu_{f}^{2}},\frac{L_{g,\mathcal{X}}^{2}}{\mu_{g}^{2}}\right\} .
\]
Therefore, from Theorem \ref{strongconvexresult}, we arrive at the
inequality $\mathbb{E}_{\mathcal{Q}}\left[G(\overline{x}_{N,s})\right]\leq9L\max\left\{ \mu_{f},\mu_{g}\right\} \max\left\{ \frac{L_{f}^{2}}{\mu_{f}^{2}},\frac{L_{g,\mathcal{X}}^{2}}{\mu_{g}^{2}}\right\} /N$.
\end{proof}

\subsection{\label{subsec:Adaptive-Sampling} CSA with Adaptive Sampling}

This subsection considers the adaptive sampling scheme. First, we
need to prove two prerequisite Lemmas \ref{transform} and \ref{existenceofmaximizer}.
Lemma \ref{transform} establishes an equivalence between the nonlinear
finite-dimensional optimization problem $\max_{\delta\in\Delta}g(x,\delta)$
and an infinite-dimensional linear optimization problem $\max_{\phi\in\mathcal{P}(\Delta)}\mathbb{E}_{\widetilde{\delta}\sim\phi}\left[g\left(x,\widetilde{\delta}\right)\right]$. 
\begin{proof}[Proof of Lemma \ref{transform}]
 The existence of a maximizer $\delta^{\ast}(x)\in\arg\max_{\delta\in\Delta}g(x,\,\delta)$
can be guaranteed by Assumptions\textbf{\textcolor{black}{{} A3}}, \textbf{\textcolor{black}{A5}}.
On one hand, for any $\phi\in\mathcal{P}(\Delta)$,

\[
\mathbb{E}_{\delta\sim\phi}\left[g\left(x,\delta\right)\right]=\int_{\Delta}g(x,\,\delta)\phi(d\delta)\leq\int_{\Delta}\max_{\delta\in\Delta}g(x,\delta)\phi(d\delta)=\max_{\delta\in\Delta}g(x,\delta).
\]
Since $\phi$ is arbitrary, we have $\max_{\delta\in\Delta}g(x,\delta)\geq\max_{\phi\in\mathcal{P}(\Delta)}\mathbb{E}_{\widetilde{\delta}\sim\phi}\left[g\left(x,\widetilde{\delta}\right)\right]$.
On the other hand, we can put all mass of $\phi$ on $\delta^{\ast}(x)$,
i.e., the Dirac measure $\phi=\delta_{\delta^{\ast}(x)}$, thus $\mathbb{E}_{\widetilde{\delta}\sim\delta_{\delta^{\ast}(x)}}\left[g\left(x,\,\widetilde{\delta}\right)\right]=\max_{\delta\in\Delta}g(x,\delta)$,
which implies $\max_{\delta\in\Delta}g(x,\delta)\leq\max_{\phi\in\mathcal{P}(\Delta)}\mathbb{E}_{\widetilde{\delta}\sim\phi}\left[g\left(x,\widetilde{\delta}\right)\right]$.
\end{proof}
Lemma \ref{existenceofmaximizer} justifies the existence of a solution
of the regularized cut generation Problem (\ref{regulcutgen}), and
provides a closed form expression. 
\begin{proof}[Proof of Lemma \ref{existenceofmaximizer}]
 By Theorem 15.11 in \cite{aliprantisinfinite}, $\mathcal{P}(\Delta)$
is compact in the weak-star topology since $\Delta$ is compact. Further,
the mapping $\phi\rightarrow\mathbb{E}_{\widetilde{\delta}\sim\phi}\left[g\left(x,\,\widetilde{\delta}\right)\right]$
is continuous with respect to the weak-star topology in $\mathcal{P}(\Delta)$
from Assumption \textbf{\textcolor{black}{A5}}, the mapping $\phi\rightarrow D\left(\phi,\,\phi_{u}\right)$
is lower semi-continuous with respect to the weak-star topology in
$\mathcal{P}(\Delta)$ by invoking Theorem 5.27 in \cite{fonseca2007modern},
and so $\phi\rightarrow\mathbb{E}_{\widetilde{\delta}\sim\phi}\left[g\left(x,\,\widetilde{\delta}\right)\right]-\kappa\,D\left(\phi,\,\phi_{u}\right)$
is upper semi-continuous in $\phi\in\mathcal{P}(\Delta)$ with respect
to the weak-star topology. Therefore, the maximizer of $\mathbb{E}_{\widetilde{\delta}\sim\phi}\left[g\left(x,\,\widetilde{\delta}\right)\right]-\kappa\,D\left(\phi,\,\phi_{u}\right)$
is attained in $\phi\in\mathcal{P}(\Delta)$.

Let $\mathcal{M}_{+}\left(\Delta\right)$ denote the space of non-negative
measures on $\Delta$. We note that the regularized cut generation
Problem (\ref{regulcutgen}) is a constrained calculus of variations
problem:

\[
\begin{array}{rcl}
\max_{\phi\in\mathcal{M}_{+}\left(\Delta\right)} &  & \int_{\Delta}g\left(x,\,\delta\right)\phi(\delta)-\kappa\log\left(\frac{\phi(\delta)}{p_{u}}\right)\phi(\delta)d\delta\\
s.t. &  & \int_{\Delta}\phi(\delta)d\delta=1.
\end{array}
\]
By using Euler's equation in the calculus of variations (see Section
7.5 in \cite{luenberger1997optimization}), we obtain after simplification,

\begin{equation}
\kappa\log\left(\phi(\delta)\right)=g\left(x,\delta\right)+C,\,\forall\delta\in\Delta,\label{Euler-1}
\end{equation}
where $C=\upsilon-\kappa\log(p_{u})-\kappa$ and $\upsilon$ is the
Lagrange multiplier of the constraint $\int_{\Delta}\phi(\delta)d\delta=1$.
From (\ref{Euler-1}) and the constraint $\int_{\Delta}\phi(\delta)d\delta=1$,
we obtain the expression

\begin{equation}
\phi_{\kappa,\,x}(\delta)=\frac{\exp\left(g(x,\delta)/\kappa\right)}{\int_{\Delta}\exp\left(g(x,\delta)/\kappa\right)d\delta},\,\forall\delta\in\Delta.\label{phi_Rx}
\end{equation}
\end{proof}
The following lemma is an intermediate result, where we use the Assumption
\ref{fulldimandconvex} that $\Delta$ is full dimensional and convex.
It is used in the proof of Proposition \ref{maxsupdiff}, which paves
the way for the cut generation result for the adaptive sampling scheme.
Recall that $\Gamma(\cdot)$ is the gamma function.
\begin{lem}
\label{importbound}Suppose Assumption \ref{fulldimandconvex} holds.
For any $\kappa\in(0,1]$ and $x\in\mathcal{X},$ we have

\begin{equation}
\int_{\Delta}\exp\left(\frac{g(x,\delta)}{\kappa}\right)d\delta\geq\exp\left(\frac{G(x)}{\kappa}\right)\exp\left(-L_{g,\Delta}(R_{\Delta}+D_{\Delta})\right)\frac{\pi^{d/2}}{\Gamma(d/2+1)}(\kappa R_{\Delta})^{d}.\label{keybound}
\end{equation}
\end{lem}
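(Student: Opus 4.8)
The plan is to lower-bound the integral by discarding most of $\Delta$ and integrating only over a small Euclidean ball on which $g(x,\cdot)$ is guaranteed to stay close to its maximum $G(x)$; the Lipschitz property of $\delta\mapsto g(x,\delta)$ (Assumption \textbf{A5}) quantifies ``close'', and convexity of $\Delta$ (Assumption \ref{fulldimandconvex}) guarantees that a suitably scaled ball still lies inside $\Delta$.

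First I would fix $x\in\mathcal{X}$ and pick $\delta^{\ast}(x)\in\arg\max_{\delta\in\Delta}g(x,\delta)$, which exists by Assumptions \textbf{A3} and \textbf{A5}, so that $g(x,\delta^{\ast}(x))=G(x)$. Using that $B_{R_{\Delta}}(\delta_{0})\subseteq\Delta$, I would then introduce
\[
S_{\kappa}:=\bigl\{\kappa y+(1-\kappa)\delta^{\ast}(x):y\in B_{R_{\Delta}}(\delta_{0})\bigr\},
\]
which is exactly the closed Euclidean ball of radius $\kappa R_{\Delta}$ centered at $\kappa\delta_{0}+(1-\kappa)\delta^{\ast}(x)$ (the map $y\mapsto\kappa y+(1-\kappa)\delta^{\ast}(x)$ is affine with linear part $\kappa I$). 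Since $B_{R_{\Delta}}(\delta_{0})\subseteq\Delta$, $\delta^{\ast}(x)\in\Delta$, $\kappa\in(0,1]$, and $\Delta$ is convex, every point of $S_{\kappa}$ is a convex combination of two points of $\Delta$, so $S_{\kappa}\subseteq\Delta$.

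The key estimate is that each $\delta\in S_{\kappa}$ is uniformly close to $\delta^{\ast}(x)$: writing $\delta=\kappa y+(1-\kappa)\delta^{\ast}(x)$ with $y\in B_{R_{\Delta}}(\delta_{0})$ gives $\|\delta-\delta^{\ast}(x)\|=\kappa\|y-\delta^{\ast}(x)\|\le\kappa\bigl(\|y-\delta_{0}\|+\|\delta_{0}-\delta^{\ast}(x)\|\bigr)\le\kappa(R_{\Delta}+D_{\Delta})$. By Lipschitz continuity of $\delta\mapsto g(x,\delta)$ with constant $L_{g,\Delta}$, this yields $g(x,\delta)\ge G(x)-L_{g,\Delta}\kappa(R_{\Delta}+D_{\Delta})$ for every $\delta\in S_{\kappa}$. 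Restricting the integral to $S_{\kappa}$ and invoking this pointwise bound (on which the integrand is constant) gives
\[
\int_{\Delta}\exp\!\Bigl(\tfrac{g(x,\delta)}{\kappa}\Bigr)d\delta\ \ge\ \int_{S_{\kappa}}\exp\!\Bigl(\tfrac{g(x,\delta)}{\kappa}\Bigr)d\delta\ \ge\ \exp\!\Bigl(\tfrac{G(x)}{\kappa}\Bigr)e^{-L_{g,\Delta}(R_{\Delta}+D_{\Delta})}\,\mathrm{vol}(S_{\kappa}),
\]
and the claimed inequality follows upon inserting $\mathrm{vol}(S_{\kappa})=\frac{\pi^{d/2}}{\Gamma(d/2+1)}(\kappa R_{\Delta})^{d}$, the volume of a $d$-dimensional ball of radius $\kappa R_{\Delta}$.

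The only real subtlety is that $\delta^{\ast}(x)$ may lie on the boundary of $\Delta$, so one cannot simply integrate over a ball centered at $\delta^{\ast}(x)$; the convex-combination device defining $S_{\kappa}$ is precisely what circumvents this, and it is also the source of the factor $\kappa^{d}$ in the final bound. Everything else is bookkeeping with the triangle inequality and the standard ball-volume formula.
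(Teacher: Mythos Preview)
Your proof is correct and follows essentially the same approach as the paper: both construct the ball $B_{\kappa R_{\Delta}}(\kappa\delta_{0}+(1-\kappa)\delta^{\ast}(x))\subseteq\Delta$ via convexity, bound $\|\delta-\delta^{\ast}(x)\|\le\kappa(R_{\Delta}+D_{\Delta})$ on this ball, apply the Lipschitz estimate, and finish with the volume formula. The only cosmetic difference is that the paper first factors out $\exp(G(x)/\kappa)$ and bounds $\int_{\Delta}\exp\bigl(-L_{g,\Delta}\|\delta^{\ast}(x)-\delta\|/\kappa\bigr)d\delta$ before restricting to the ball, whereas you restrict to $S_{\kappa}$ first and then apply the Lipschitz bound directly to $g(x,\delta)$.
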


\begin{proof}
First, we have 
\[
\begin{array}{rcl}
\int_{\Delta}\exp\left(\frac{g(x,\delta)}{\kappa}\right)d\delta & = & \exp\left(\frac{1}{\kappa}\max_{\delta\in\Delta}g(x,\delta)\right)\int_{\Delta}\exp\left(-\frac{1}{\kappa}(\max_{\delta\in\Delta}g(x,\delta)-g(x,\delta))\right)d\delta\\
 & \geq & \exp\left(\frac{1}{\kappa}\max_{\delta\in\Delta}g(x,\delta)\right)\int_{\Delta}\exp\left(-\frac{L_{g,\Delta}}{\kappa}\left\Vert \delta^{\ast}(x)-\delta\right\Vert \right)d\delta,
\end{array}
\]
where $\delta^{\ast}(x)\in\arg\max_{\delta\in\Delta}g(x,\,\delta)$,
and the last inequality follows since $\max_{\delta\in\Delta}g(x,\,\delta)-g(x,\delta)=g(x,\delta^{\ast}(x))-g(x,\delta)$$\leq L_{g,\Delta}\left\Vert \delta^{\ast}(x)-\delta\right\Vert $
due to Assumption \textbf{\textcolor{black}{A5}}. It is then sufficient
to show

\[
\int_{\Delta}\exp\left(-\frac{L_{g,\Delta}}{\kappa}\left\Vert \delta^{\ast}(x)-\delta\right\Vert \right)d\delta\geq\exp\left(-L_{g,\Delta}(R_{\Delta}+D_{\Delta})\right)\frac{\pi^{d/2}}{\Gamma(d/2+1)}(\kappa R_{\Delta})^{d}.
\]
Let $\delta_{\kappa}:=\kappa\delta_{0}+(1-\kappa)\delta^{\ast}(x)$.
Since $\Delta$ is convex by Assumption \ref{fulldimandconvex}, we
deduce

\begin{equation}
B_{\kappa R_{\Delta}}(\delta_{\kappa})=\left\{ \delta:\,\left\Vert \delta-\kappa\delta_{0}-(1-\kappa)\delta^{\ast}(x)\right\Vert \leq\kappa R_{\Delta}\right\} =(1-\kappa)\delta^{\ast}(x)+\kappa B_{R_{\Delta}}(\delta_{0})\subseteq\Delta,\label{Ballinside}
\end{equation}
which implies that, for any $\delta\in B_{\kappa R_{\Delta}}(\delta_{\kappa}),$
there exists $\delta'\in B_{R_{\Delta}}(\delta_{0})$ such that $\delta=\kappa\delta'+(1-\kappa)\delta^{\ast}(x)$.
Then, for any $\delta\in B_{\kappa R_{\Delta}}(\delta_{\kappa}),$
we have

\begin{equation}
\left\Vert \delta^{\ast}(x)-\delta\right\Vert =\kappa\left\Vert \delta'-\delta^{\ast}(x)\right\Vert \leq\kappa\left(\left\Vert \delta'-\delta_{0}\right\Vert +\left\Vert \delta_{0}-\delta^{\ast}(x)\right\Vert \right)\leq\kappa(R_{\Delta}+D_{\Delta}).\label{distbtw}
\end{equation}
Therefore, 
\[
\begin{array}{rcl}
\int_{\Delta}\exp\left(-\frac{L_{g,\Delta}}{\kappa}\left\Vert \delta^{\ast}(x)-\delta\right\Vert \right)d\delta & \geq & \int_{B_{\kappa R_{\Delta}}(\delta_{\kappa})}\exp\left(-\frac{L_{g,\Delta}}{\kappa}\left\Vert \delta^{\ast}(x)-\delta\right\Vert \right)d\delta\\
 & \geq & \exp\left(-L_{g,\Delta}(R_{\Delta}+D_{\Delta})\right)\int_{B_{\kappa R_{\Delta}}(\delta_{\kappa})}1d\delta\\
 & = & \exp\left(-L_{g,\Delta}(R_{\Delta}+D_{\Delta})\right)\frac{\pi^{d/2}}{\Gamma(d/2+1)}(\kappa R_{\Delta})^{d},
\end{array}
\]
where the first inequality is by (\ref{Ballinside}) and the second
is by (\ref{distbtw}), and the equality follows since $\int_{B_{\kappa R_{\Delta}}(\delta_{\kappa})}1d\delta=\frac{\pi^{d/2}}{\Gamma(d/2+1)}(\kappa R_{\Delta})^{d}$
is the volume of the Euclidean ball with radius $\kappa R_{\Delta}$
in $\mathbb{R}^{d}$. 
\end{proof}
Now we are in a position to establish Proposition \ref{maxsupdiff}. 
\begin{proof}[Proof of Proposition \ref{maxsupdiff}]
 By replacing (\ref{phi_Rx}) in $\mathbb{E}_{\widetilde{\delta}\sim\phi}\left[g\left(x,\,\widetilde{\delta}\right)\right]-\kappa\,D\left(\phi,\,\phi_{u}\right)$,
we obtain after simplification,

\begin{equation}
\max_{\phi\in\mathcal{P}(\Delta)}\left\{ \mathbb{E}_{\widetilde{\delta}\sim\phi}\left[g\left(x,\,\widetilde{\delta}\right)\right]-\kappa\,D\left(\phi,\,\phi_{u}\right)\right\} =\kappa\log\left(\int_{\Delta}\exp\left(g(x,\delta)/\kappa\right)d\delta\right)+\kappa\log(p_{u}).\label{innermaxoverprob}
\end{equation}
Applying (\ref{keybound}) to bound the term $\log\left(\int_{\Delta}\exp\left(g(x,\delta)/\kappa\right)d\delta\right)$
in the right hand side of (\ref{innermaxoverprob}), we obtain 
\[
\begin{array}{rcl}
 &  & \max_{\phi\in\mathcal{\mathcal{P}}(\Delta)}\left\{ \mathbb{E}_{\widetilde{\delta}\sim\phi}\left[g\left(x,\,\widetilde{\delta}\right)\right]-\kappa\,D\left(\phi,\,\phi_{u}\right)\right\} \\
 & \geq & \kappa\log\left(\exp\left(\frac{G(x)}{\kappa}\right)\exp\left(-L_{g,\Delta}(R_{\Delta}+D_{\Delta})\right)\frac{\pi^{d/2}}{\Gamma(d/2+1)}(\kappa R_{\Delta})^{d}\right)+\kappa\log(p_{u})\\
 & = & G(x)+\kappa\log(\kappa)d-\kappa C.
\end{array}
\]
Since $\kappa=\min\left\{ \frac{\epsilon}{2C},\left(\frac{\epsilon}{2d}\right)^{2},1\right\} $,
we have 
\[
\kappa\log(\kappa)d-\kappa C\geq\kappa\log(\kappa)d-\frac{\epsilon}{2}\geq-\sqrt{\kappa}d-\frac{\epsilon}{2}\geq-\epsilon,
\]
where the first inequality holds since $\kappa\leq\frac{\epsilon}{2C}$,
the second holds because $\log(\kappa)\geq-\frac{1}{\sqrt{\kappa}}$,
and the last one follows from $\kappa\leq\left(\frac{\epsilon}{2d}\right)^{2}$.
Therefore, we have $\max_{\phi\in\mathcal{\mathcal{P}}(\Delta)}\left\{ \mathbb{E}_{\widetilde{\delta}\sim\phi}\left[g\left(x,\,\widetilde{\delta}\right)\right]-\kappa\,D\left(\phi,\,\phi_{u}\right)\right\} \geq G(x)-\epsilon$.
Moreover, since $\phi_{\kappa,\,x}$ solves the regularized cut generation
Problem (\ref{regulcutgen}), and since the regularization parameter
$\kappa$ and the Kullback-Liebler divergence $D\left(\phi,\,\phi_{u}\right)$
are non-negative, we arrive at the conclusion.
\end{proof}
The bound for cut generation under the adaptive sampling scheme (Proposition
\ref{cutgenerationinexpectation_adaptive}) is an immediate result
from Proposition \ref{maxsupdiff}.
\begin{proof}[Proof of Proposition \ref{cutgenerationinexpectation_adaptive}]
 Since $\delta^{(1)},\delta^{(2)},\ldots,\delta^{(M)}$ are i.i.d.
samples from probability density $\phi_{\kappa(\epsilon),\,x}$, we
have from Proposition \ref{maxsupdiff}, $\mathbb{E}_{\delta^{(i)}\sim\phi_{\kappa(\epsilon),\,x}}\left[g\left(x,\delta^{(i)}\right)\right]\geq G(x)-\epsilon$,
for $i=1,2,\ldots,M$. Therefore, as long as $M\geq1$, we have $\mathbb{E}_{\phi_{\kappa(\epsilon),\,x}^{M}}\left[\max_{1\leq i\leq M}g(x,\delta^{(i)})\right]\geq G(x)-\epsilon$.
\end{proof}
We now prove our main result Theorem \ref{varistep-algo2} (for the
generally convex case) under the adaptive sampling scheme. We need
to use Proposition \ref{cutgenerationinexpectation_adaptive} to control
the error terms.
\begin{proof}[Proof of Theorem \ref{varistep-algo2}]
 From Proposition \ref{cutgenerationinexpectation_adaptive}, we
have $\mathbb{E}_{\phi_{\kappa(\epsilon_{k}),\,x_{k}}^{M_{k}}}[\varepsilon_{k}]\leq\frac{(L_{f}+L_{g,\mathcal{X}})D_{\mathcal{X}}}{\sqrt{k}}$.
Furthermore, $\frac{N}{2}\leq k\leq N$ for $k\in\mathcal{B}$, and
by independence of samples we have 

\[
\mathbb{E}_{\mathcal{P}}\left[\frac{\sum_{k\in\mathcal{B}}\varepsilon_{k}/\sqrt{k}}{\sum_{k\in\mathcal{B}}1/\sqrt{k}}\right]\leq\frac{2D_{\mathcal{X}}(L_{f}+L_{g,\mathcal{X}})}{\sqrt{N}}.
\]
Subsequently, Theorem \ref{varistepresult} gives $\mathbb{E}_{\mathcal{P}}\left[G(\overline{x}_{N,s})\right]\leq14D_{\mathcal{X}}(L_{f}+L_{g,\mathcal{X}})/\sqrt{N}$.
\end{proof}
The proof of Theorem \ref{strong-algo2} (for the strongly convex
case) under the adaptive sampling scheme is as follows. 
\begin{proof}[Proof of Theorem \ref{strong-algo2}]
 From Proposition \ref{cutgenerationinexpectation_adaptive}, we
have $\mathbb{E}_{\phi_{\kappa(\epsilon_{k}),\,x_{k}}^{M_{k}}}[\varepsilon_{k}]\leq\frac{L}{N}\max\left\{ \mu_{f},\mu_{g}\right\} \max\left\{ \frac{L_{f}^{2}}{\mu_{f}^{2}},\frac{L_{g,\mathcal{X}}^{2}}{\mu_{g}^{2}}\right\} $.
It follows that

\[
\mathbb{E}_{\mathcal{P}}\left[\frac{\sum_{k\in\mathcal{B}}k\,\varepsilon_{k}}{\sum_{k\in\mathcal{B}}k}\right]\leq\frac{L}{N}\max\left\{ \mu_{f},\mu_{g}\right\} \max\left\{ \frac{L_{f}^{2}}{\mu_{f}^{2}},\frac{L_{g,\mathcal{X}}^{2}}{\mu_{g}^{2}}\right\} .
\]
Therefore, applying Theorem \ref{strongconvexresult} gives $\mathbb{E}_{\mathcal{P}}\left[G(\overline{x}_{N,s})\right]\leq9L\max\left\{ \mu_{f},\mu_{g}\right\} \max\left\{ \frac{L_{f}^{2}}{\mu_{f}^{2}},\frac{L_{g,\mathcal{X}}^{2}}{\mu_{g}^{2}}\right\} /N$.
\end{proof}

\section{\label{sec:Numerical-Experiments}Numerical Experiments}

This section applies our methods to a simple test problem adapted
from \cite{Calafiore_Uncertain_2005} to illustrate the theory developed
in this paper. Let $\delta_{i}\in\mathbb{R}^{2}$ for all $i=1,\ldots,4$
denote uncertain parameters such that $\left\Vert \delta_{i}\right\Vert \le1$.
We want to solve the following optimization problem: 
\begin{alignat*}{1}
\min\  & -x_{1}-x_{2}\\
\text{s. t. } & \max_{i=1,\ldots,4}\left\{ \max_{\left\Vert \delta_{i}\right\Vert \le1}\left(a_{i}+0.2\delta_{i}\right)^{\top}x-b_{i}\right\} \le0,\\
 & x_{1},x_{2}\in\left[-2,2\right],
\end{alignat*}
where 
\[
\left[\begin{array}{c}
a_{1}^{\top}\\
a_{2}^{\top}\\
a_{3}^{\top}\\
a_{4}^{\top}
\end{array}\right]=\left[\begin{array}{cc}
-1 & 0\\
0 & -1\\
1 & 0\\
0 & 1
\end{array}\right]\ \ \text{and}\ \ \left[\begin{array}{c}
b_{1}\\
b_{2}\\
b_{3}\\
b_{4}
\end{array}\right]=\left[\begin{array}{c}
0\\
0\\
1\\
1
\end{array}\right].
\]

We compare Algorithm 1 with the fixed constraint sampling and the
adaptive constraint sampling schemes, respectively. The parameters
in policy (\ref{variableparameters}) are inherently conservative.
In this experiment, we adjust the parameters $\gamma_{k}$ and $\eta_{k}$
by multiplying them with scaling parameters $c_{g}$ and $c_{e}$,
respectively. These scaling parameters are chosen by doing pilot runs
(see \cite{lan2012validation}). Under fixed constraint sampling,
we set $M_{k}$ to be constant in all iterations, and we consider
$M_{k}\in\left\{ 10,20,50,100\right\} $. Under adaptive constraint
sampling, we generate the probability distribution $\phi_{\kappa,x}$
by the Metropolis-Hastings (MH) algorithm (see e.g. \cite{robert2004monte}),
where we run the MH algorithm for 200 iterations and then take \emph{one}
sample to solve the cut generation problem. 

Table \ref{tab:Results_rule1} reports the results. As we can see,
even though we only generate one sample in each iteration under the
adaptive sampling scheme, the objective value achieved is $-1.560$,
which is close to the true optimal value $-1.559$. Figure \ref{fig:result_1}(a)
illustrates the convergence of the algorithms and Figure \ref{fig:result_1}(b)
shows the constraint violation under different sampling schemes. In
particular, we note that under the fixed sampling scheme, the constraint
violation decreases as the sample size increases. Note that we scale
the parameters $\gamma_{k}$ and $\eta_{k}$ in policy (\ref{variableparameters})
in the experiment, which may result in the failure of Lemma \ref{nonemptyB}.
We see from Figure \ref{fig:result_2}, with the parameter adjustment,
that $\mathcal{B}\neq\emptyset$ and $|\mathcal{B}|$ is at least
linearly increasing in $N$, so that our theoretical analysis is still
valid in this case (which depends on this property of $\mathcal{B}$). 

We generate the probability distribution $\phi_{\kappa,x}$ by the
MH algorithm, and perform sensitivity analysis on the number of iterations
of MH. We provide the associated objective values $f\left(\overline{x}_{N,s}\right)$
by fixing $N=10^{3}$. We can see from Figure \ref{fig:sen_analysis}
that the adaptive sampling scheme achieves a high-performance solution
(with relative gap smaller than $0.1\%$) when the MH algorithm runs
for 200 iterations.

From these experiments, we observe the inherent trade-off between
the two sampling schemes. Under fixed sampling, although only a fixed
sampling distribution is used along all iterations, we need to generate
batch samples to achieve good performance. In contrast, under adaptive
sampling, we need extra effort to generate samples, but only need
one sample at each iteration. 

\begin{table}
\caption{\label{tab:Results_rule1}Simulation results with $10^{3}$ iterations,
$c_{g}=0.35$, and $c_{e}=0.001$.}

\centering{}%
\begin{tabular}{|c|c|c|c|c|c|c|}
\hline 
\multirow{2}{*}{} & \multirow{2}{*}{Adaptive sampling } & \multicolumn{4}{c|}{Fixed constraint sampling} & \multirow{2}{*}{Optimal value}\tabularnewline
\cline{3-6} 
 &  & $M_{k}=10$ & $M_{k}=20$ & $M_{k}=50$ & $M_{k}=100$ & \tabularnewline
\hline 
\hline 
Objective values & $-1.560$ & $-1.621$ & $-1.595$ & $-1.575$ & $-1.566$ & $-1.559$\tabularnewline
\hline 
Relative gaps & $-0.1\%$ & $-4.0\%$ & $-2.3\%$ & $-1.0\%$ & $-0.5\%$ & -\tabularnewline
\hline 
\end{tabular}
\end{table}
\begin{figure}
\begin{centering}
\includegraphics[viewport=15bp 0bp 425bp 320bp,clip,scale=0.57]{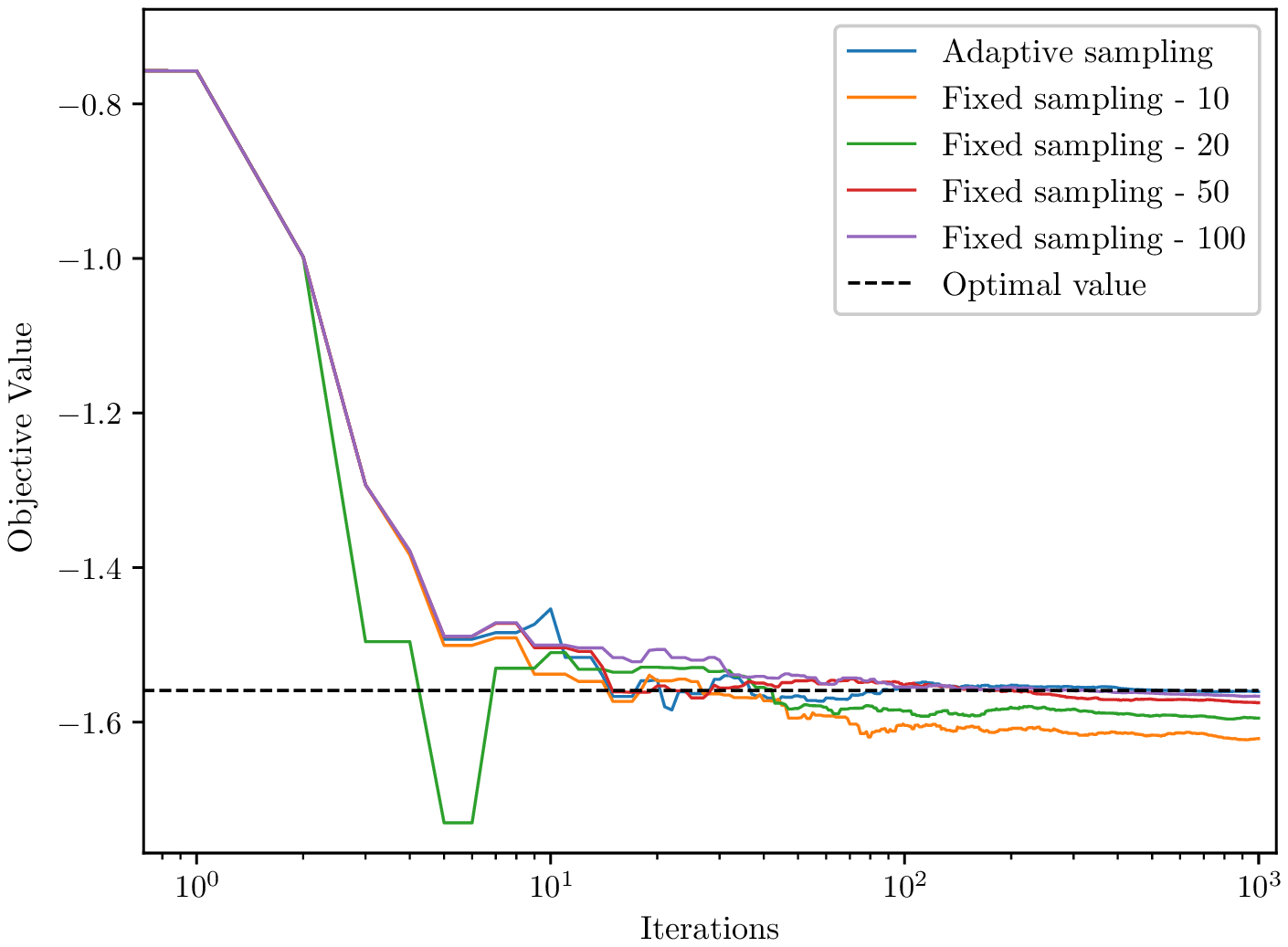}\includegraphics[viewport=15bp 0bp 425bp 320bp,clip,scale=0.57]{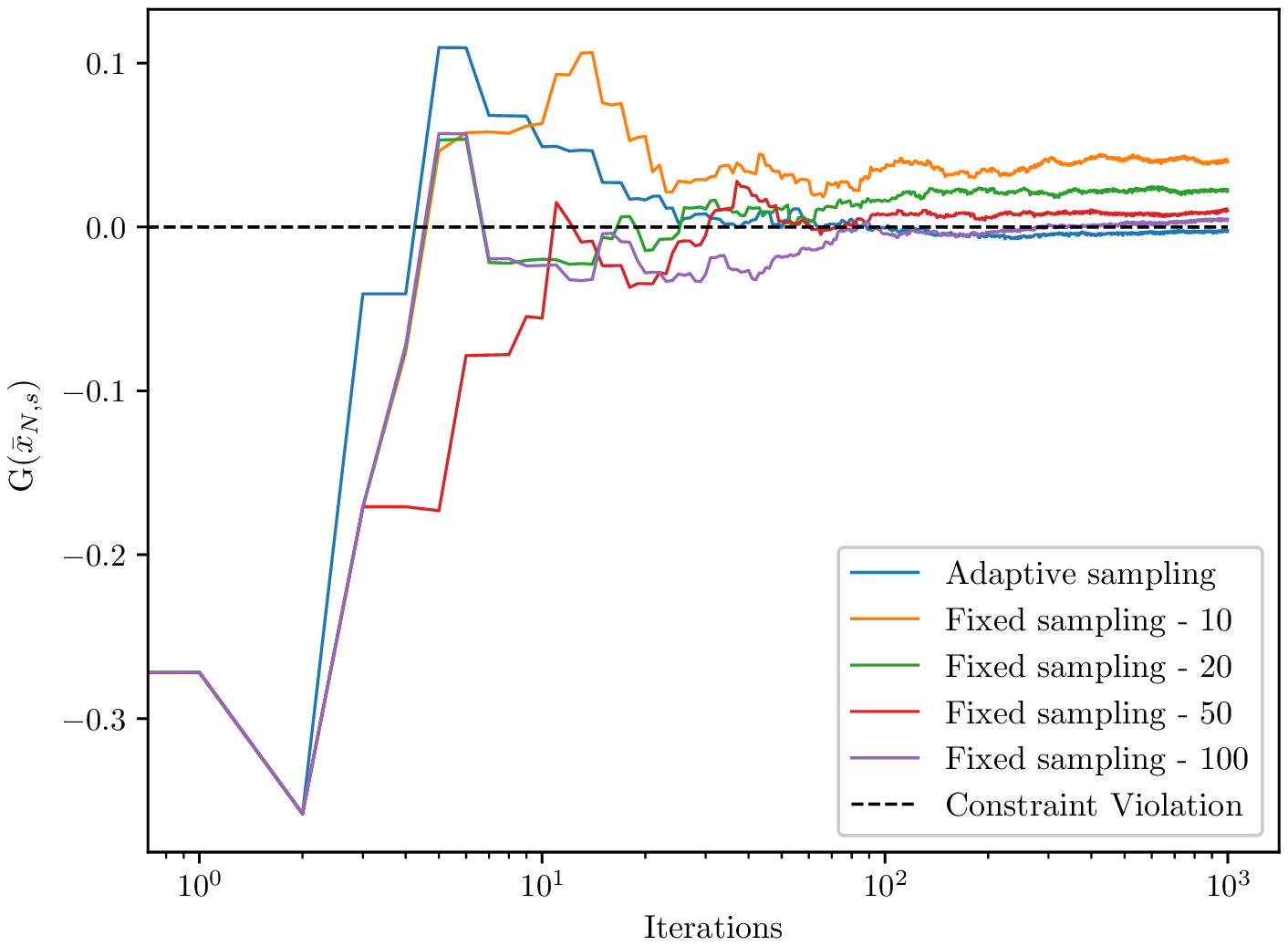}
\par\end{centering}
\caption{\label{fig:result_1}Simulation results given $c_{g}=0.35$ and $c_{e}=0.001$:
(a) objective values and (b) constraint violations with the number
of iterations.}
\end{figure}
\begin{figure}
\begin{centering}
\includegraphics[viewport=15bp 0bp 425bp 320bp,clip,scale=0.57]{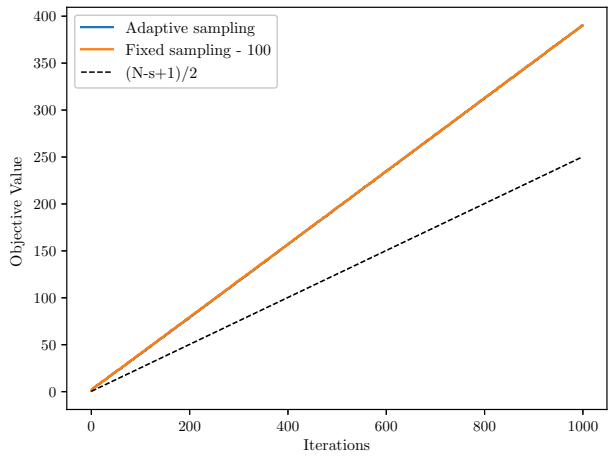}
\par\end{centering}
\caption{\label{fig:result_2}The relation between size $\left|\mathcal{B}\right|$
and the number of iterations.}
\end{figure}
\begin{figure}
\begin{centering}
\includegraphics[viewport=10bp 0bp 425bp 320bp,clip,scale=0.57]{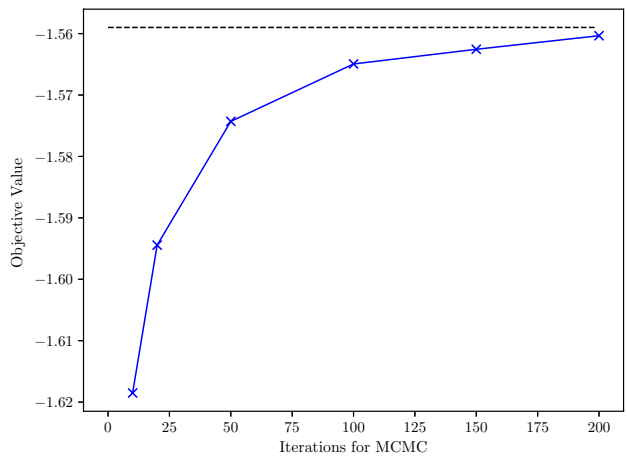}
\par\end{centering}
\caption{\label{fig:sen_analysis}Sensitivity analysis on the iterations of
MH under the adaptive sampling scheme.}
\end{figure}

\section{\label{conclusion}Conclusion}

In this work, we combine CSA (as originally developed in \cite{lan2016algorithms})
with inexact cut generation to solve SIPs. Since the cut generation
problem is typically intractable, we emphasize random constraint sampling
to approximately solve this problem. In our first approach, we rely
on a fixed constraint sampling distribution. Our second approach adaptively
updates the constraint sampling distribution, based on the current
iterate. The major advantage of adaptive over fixed sampling is that,
theoretically, it only requires one sample at each iteration. 

As our main contribution, we provide general error bounds (in terms
of the error in solving each cut generation problem) for inexact CSA.
We show that both our sampling schemes achieve an $\mathcal{O}(1/\sqrt{N})$
rate of convergence in expectation, in terms of both optimality gap
and constraint violation, when the objective and constraint functions
are generally convex. We also improve this rate to $\mathcal{O}(1/N)$
in the strongly convex case.

\bibliographystyle{plain}
\bibliography{References}

\end{document}